\theoremstyle{plain}
\newtheorem{Theorem}{Theorem}[section]
\newtheorem{Lemma}[Theorem]{Lemma}
\newtheorem{Proposition}[Theorem]{Proposition}
\newtheorem{Corollary}{Corollary}[Theorem]
\newtheorem{Definition}[Theorem]{Definition}
\newtheorem*{Hp}{Assumptions}
\newtheorem*{HpSingle}{Assumption}
\theoremstyle{definition}
\newtheorem{Remark}[Theorem]{Remark}
\newtheorem{Example}[Theorem]{Example}
\numberwithin{equation}{section}
\def\esssup_#1{\underset{#1}{\mathrm{ess\,sup\, }}} %for the essential supremum
\def\essinf_#1{\underset{#1}{\mathrm{ess\,inf\, }}} %for the essential infimum
\def\qed{{\hfill\hbox{\enspace${ \square}$}} \smallskip}
\def\sqr#1#2{{\vcenter{\vbox{\hrule height .#2pt \hbox{\vrule
 width .#2pt height#1pt \kern#1pt \vrule
width .#2pt} \hrule height .#2pt}}}}
\def\square{\mathchoice\sqr54\sqr54\sqr{4.1}3\sqr{3.5}3}
\def\ds{\begin{displaystyle}}
\def\eds{\end{displaystyle}}
\def\dis{\displaystyle }
\def\<{\langle }
\def\>{\rangle }
\def \I{\mathbb{I}}
\def \R{\mathbb{R}}
\def \E{\mathbb{E}} 
\def \F{\mathbb{F}} 
\def \G{\mathbb{G}}
\def \P{\mathbb{P}} 
\def \Q{\mathbb{Q}}
\def \H{\mathbb{H}}
\def \Fc{{\cal F}}
\def\cala{{\cal A}} 
\def\calb{{\cal B}} 
\def\calc{{\cal C}} 
\def\cald{{\cal D}}
\def\calf{{\cal F}}
\def\calg{{\cal G}}
\def\calh{{\cal H}}
\def\calp{{\cal P}}
\def\calx{{\cal X}}
\def\caly{{\cal Y}}
\def\call{{\cal L}} 
\def\cals{{\cal S}}
\def\calw{{\cal W}}
\def \eps{\varepsilon}
\definecolor{SilviaColor}{RGB}{255,0,255}
\definecolor{MarcoColor}{RGB}{100,100,255}
\definecolor{HuyenColor}{RGB}{255,140,0}
\def\beqs{\begin{eqnarray*}}
\def\enqs{\end{eqnarray*}}
\def\beq{\begin{eqnarray}}
\def\enq{\end{eqnarray}}
\title{
Optimal control of  McKean-Vlasov systems under partial observation and 
hidden Markov switching}
\author{Marco Fuhrman \footnote{Dipartimento di Matematica, Università degli Studi di Milano, {\sf marco.fuhrman at unimi.it} This author is a member of  INDAM-GNAMPA.}   
\and 
Huy\^en Pham\footnote{Ecole Polytechnique, CMAP \sf huyen.pham at polytechnique.edu  
The work of this author is  partially supported by the Chair ``Risques Financiers", and the 
Chair Finance \& Sustainable Development / the FiME Lab (Institut Europlace de Finance)}
\and 
Silvia Rudà \footnote{Dipartimento di Matematica, Università degli Studi di Milano, {\sf silvia.ruda at unimi.it}. This author is a member of INDAM-GNAMPA}
}
\date{}
\begin{document}

\maketitle

\begin{abstract}
We study a class of mean-field control problems under partial observation. The controlled dynamics are of McKean–Vlasov type and are subject to regime switching driven by a hidden Markov chain. The observation process depends on the control and on the joint distribution of the state and control, which prevents the direct application of standard filtering techniques.

The main contribution of this paper is to show how this distribution dependence can be handled within a change-of-probability framework, leading to a well-posed separated control problem. We derive a Zakai equation with a specific structure for the unnormalized filter, and show that the corresponding value function satisfies a dynamic programming principle. This yields a Bellman equation posed on a convex subset of a Wasserstein space, characterizing the optimal control problem under partial observation. 
\end{abstract}

%\tableofcontents

\vspace{5mm}

\noindent {\bf Keywords:} Mean-field control; McKean-Vlasov dynamics; partial observation; hidden Markov switching; Zakai equation; dynamic programming equation in  Wasserstein space.

\vspace{5mm}

\noindent {\bf MSC Classification:} 93E20; 60G35; 60J27; 49L20.

\section{Introduction
}

We consider an optimal control problem for a large population system evolving under partial observation. The dynamics of the controlled state is of McKean–Vlasov type, reflecting mean-field interactions, and is subject to regime switching driven by a finite-state Markov chain.  The regime process models latent environmental factors and is not directly observed by the controller. 

The problem is formally described as follows. Let $(\Omega,\calf,\Q)$ be a probability space 
supporting a $d$-dimensional Brownian motion $B$, and a continuous-time Markov chain $M$ with finite state space $S$, independent of $B$.   
%with transition rates $\lambda_{ij}\ge0$ ($i,j\in S$, $i\neq j$) and initial distribution $\pi_0$. 
The controller observes a process $Y$, whose dynamics is given by 
\begin{align}\label{stateeq1}
    \left\{
\begin{array}{rcl}
dY_s&=& \sigma(Y_s)\,h(Y_s,\Q_{Y_s,\alpha_s},\alpha_s,M_s)\,ds + \sigma(Y_s)\,dB_s,\qquad s\in   [0,T],
\\
Y_0&=&y_0 \in \R^d, 
    \end{array}
    \right.
\end{align}
where $\alpha$ is an admissible control process taking values in  a control set $A$, and $\Q_{Y_s,\alpha_s}$ denotes the law of the pair $(Y_s,\alpha_s)$ under $\Q$. The coefficients depend on the unobserved regime process $M$, 
which thus plays the role of a hidden signal in the sense of filtering theory. The special form of the drift in \eqref{stateeq1} involves no loss of generality since, for technical reasons, we will assume $\sigma$ to be invertible. 
The control objective is to maximize a reward functional
\begin{align}\label{rewardfunz1}
    J(\alpha)=
\E_\Q\left[\int_0^T f(Y_s,\Q_{Y_s,\alpha_s},\alpha_s,M_s)\,ds + g(Y_T,\Q_{Y_T},M_T)\right].
\end{align}
Suitable regularity and growth conditions on the coefficients $h,\sigma,f,g$ will be imposed; in particular, $h$ will be required to be bounded.
  
The combination of a hidden regime process with a control-and distribution-dependent observation mechanism places this problem outside the scope of classical partially observed control models. Indeed, standard filtering approaches for partially observed optimal control problems  rely on changes of probability measure, under which the distributions appearing in the coefficients are themselves modified. 
The main novelty of this paper is to show how this difficulty can be overcome in the present mean-field setting. We derive an equivalent separated control problem of McKean–Vlasov type, whose analysis involves specific technical challenges. In particular, the resulting dynamics do not satisfy standard Lipschitz conditions, and the associated dynamic programming approach leads to a Hamilton–Jacobi–Bellman equation posed on a convex subset of a Wasserstein space.

\paragraph{Related literature.} 
Mean-field and McKean–Vlasov control problems have been extensively studied in recent years, notably through stochastic maximum principles or programming methods, see for instance the monographs \cite{BenFreYam13}, \cite{CarmonaDelarue1}, \cite{CarmonaDelarue2}. Extensions to systems with regime-switching dynamics, where the coefficients are modulated by an exogenous Markov chain, have also been investigated, see e.g. \cite{BenDjeTemYam20}, \cite{NguYinNgu21}. 

Partial observation in mean-field control was introduced in \cite{BucLiMa17}, whose authors consider a stochastic control problem in which the dynamics and cost depend on the conditional law of the state given the observation filtration, and derive necessary optimality conditions via a system of conditional mean-field backward stochastic differential equations. In this framework, however, the observation process is assumed to be independent of both the control and the conditional distribution, and not to involve hidden regime dynamics. We also mention \cite{DjehicheTembine} for an approach to a mean-field control problem under partial observation based on a suitable stochastic maximum principle. For the discrete time case the reader may consult \cite{ChichKh23} and \cite{WanWangXiong25}.
 
More generally, partially observed stochastic control problems can be addressed either via direct stochastic maximum principle methods, see \cite{Tang98}, or through dynamic programming techniques combined with stochastic filtering. 
In the latter approach, the control problem is reformulated in terms of the filter or belief process, often via a change of probability measure leading to Zakai-type equations; see, for instance, \cite{BensoussanFiltering, ElliottFiltering, Nisio15book}. While alternative formulations may be available in specific models, notably in certain financial applications \cite{CohMerKnoch25, coxkallbladWang2025, RiederBauerle, FreyGabihWunderlich},  
%FreyGabihWunderlichDPP, 
%ShenSiu, 
the filtering-based dynamic programming approach remains the standard methodology.

Such methods are well understood for classical partially observed diffusion control and for hidden Markov models with control-independent observations. 
Their extension to mean-field control problems involving regime-switching dynamics and observation processes that depend on the control and on the conditional distribution remains largely unexplored. The present paper contributes to this direction by developing a dynamic programming framework for a mean-field control problem under partial observation, where the system is driven by a hidden Markov chain and the observation process depends on the control and on the conditional joint distribution of state and control.

\paragraph{Overwiew of our method and main contributions.}
Postponing precise statements and technical details, we briefly describe the main ideas of our approach. 
The starting point is a change of probability measure. Defining the exponential martingale process $L$ as
\begin{align*}
    L_{s}^{-1} =\exp\left(
-\int_0^s h(Y_r,\Q_{Y_r,\alpha_r},\alpha_r,M_r)\,dB_r -\frac12 \int_0^s| h(Y_r,\Q_{Y_r,\alpha_r},\alpha_r,M_r)|^2\,dr
    \right), \qquad s\in [0,T], 
\end{align*}
and introducing the so-called reference probability $\P$ by 
 $d\P=L_T^{-1}d\Q$,  Girsanov's theorem yields  a Brownian motion 
\[
W_s=B_s+\int_0^s h(Y_r,\Q_{Y_r,\alpha_r},\alpha_r,M_r)\,dr, \qquad s\in [0,T],
\]
under $\P$. Under this measure, the regime process $M$ remains a Markov chain with the same transition matrix, and the observation process satisfies 
\begin{align*}
%\label{Yalone}
%\left\{
%\begin{array}{rcl}
dY_s &=\;  \sigma(Y_s)\,dW_s.  
%\\ Y_0&=&y_0.
%\end{array}
%\right.
\end{align*}
As a consequence, the filtration generated by $Y$ coincides with the filtration generated by $W$, and the control is adapted to this filtration.
%Since $\sigma$ will be assumed to be globally Lipschitz and invertible, the filtration   generated by $Y$ is the same as the %filtration   generated by $W$. It will be denoted by $\F^W=(\calf_t^W)$, and we then  assume the control process $\alpha$ to be %$\F^W$-predictable. 

We then  introduce  the unnormalized conditional distribution of the hidden regime
\begin{align*}
\rho_s(\phi)=\E\,[\phi(M_s)\,L_s\,|\, \calf^W_s], \qquad s\in[0,T],
\end{align*}
%for every $\phi:S\to\R $, where $\E$ denotes the expectation under $\P$.  
%Define as usual $\lambda_{ii}=-\sum_{j\neq i}\lambda_{ij}$, and let $\Lambda$ denote the matrix $(\lambda_{ij})_{i,j\in S}$. 
%Then, by standard arguments in filtering theory, the process $\rho$ satisfies the following controlled Zakai equation:
which satisfies a controlled Zakai equation.  The original reward functional can be rewritten under $\P$ in terms of $\rho$. Moreover, the conditional laws appearing in the coefficients can be expressed as weighted versions of the corresponding laws under $\P$, leading to a formulation entirely under the reference probability.

Since the regime space $S$ is finite of cardinality $N$, the Zakai equation can be rewritten as a finite-dimensional stochastic differential equation with values in $\R_+^N$. This yields a reformulated control problem with full observation, driven by the Brownian motion $W$, whose state variables are the pair $(Y,X)$, where $X$ represents the unnormalized filter with components $X_s^i$ $=$ $\rho_s(1_{\{i\}})$, $i$ $\in$ $S$.

A key difficulty remains: although the reformulated problem is fully observed, the driving Brownian motion and the admissible control set depend on the control itself - a typical feature of approaches based on changes of measure. To overcome this issue, we introduce a separated control problem formulated directly on the reference probability space. Under suitable assumptions, we prove that this separated problem is equivalent, in a weak sense, to the original partially observed control problem.

The separated problem is then studied using dynamic programming. Due to the special structure of the Zakai equation, the resulting controlled dynamics are neither linear nor globally Lipschitz, requiring a careful well-posedness and stability analysis. We introduce a lifted value function, prove a law-invariance property, and define a value function on a convex subset of a Wasserstein space. Using these results, we establish a dynamic programming principle and, following similar arguments as  in \cite{DjePosTan22} or  \cite{CosGozKhaPhaRos23}, characterize the value function as a viscosity solution to a Hamilton--Jacobi--Bellman equation posed on this domain. 
We emphasize that the special structure of our problem leads to the introduction of an HJB equation which is not defined on a whole Wasserstein space, contrary to the large majority of existing results. Moreover, in order to deal with significant examples, the integrability order of the Wasserstein space may be larger that $2$ (and even different for different marginals).
Uniqueness of viscosity solutions is not addressed in this paper and requires a separate treatment, possibly under  additional assumptions as in \cite{CosGozKhaPhaRos24}, \cite{BayEkrHeZha26}.

\paragraph{Outline of the paper.} 
The paper is organized as follows. In Section \ref{sec-separated} we introduce the separated control problem and state the standing assumptions. We then establish its equivalence with the original partially observed mean-field control problem presented in the introduction. 
Section 3 develops a dynamic programming approach for the separated problem. We prove well-posedness, introduce a lifted value function, and derive stability results despite the lack of Lipschitz continuity. We then establish law invariance, continuity of the value function on a convex Wasserstein domain, and a dynamic programming principle.
%Section \ref{sec-dynprog} is devoted to  the dynamic programming approach for the separated problem. We first prove well-%posedness of the controlled state equation and of the associated reward functional. Under additional assumptions, we establish %stability results with respect to perturbations of the initial conditions. 
%This analysis is technically involved due to the lack of Lipschitz continuity of the coefficients. We further prove a law-%invariance property and introduce a value function defined on a suitable convex subset of a Wasserstein space. Using the %previous stability results, we show continuity of this value function and establish a dynamic programming principle. 
In Section \ref{sec-HJB} we study the associated Hamilton–Jacobi–Bellman equation. We provide a verification theorem and prove that the value function is a viscosity solution to the HJB equation.
 In Section \ref{SectionExamples}, we present several examples illustrating the generality of our assumptions and covering a number of significant cases; we collect in the Appendix \ref{AppendixProofs} the proofs of several technical results.

\section{The separated control problem}\label{sec-separated}

 \subsection{Assumptions and formulation}
Let \(q \geq 1\). In what follows, for any metric space \((\cals, d_{\cals})\) we denote by \(\calp_q(\cals)\) the Wasserstein space of order \(q\), defined as the space of Borel probability measures \(\mu\) on \(\cals\) such that \(||\mu||_q^q:=\int_{\cals} d_S(x,x_0)^q \, \mu(dx)\) is finite for any \(x_0 \in \cals\). We endow \(\calp_q(\cals)\) with the Wasserstein distance \(\calw_q\) defined as follows: for any \(\mu\), \(\nu \in \calp_q(\cals)\) 
\[ \calw_q(\mu,\nu):= \inf_{\substack{\pi \in \calp_q(\cals \times \cals): \pi(\cdot \times \cals)=\mu(\cdot), \\ \pi(\cals \times \cdot)=\nu(\cdot)}} \bigg( \int_{\cals \times \cals} d_{\cals}(x,y)^q \pi(dx,dy)  \bigg)^{\frac{1}{q}}.  
\]

\begin{Hp}[$\boldsymbol{A.1}$] \label{HpA1}
\begin{enumerate}
\item[] 
    \item[(i)] 
 $(\Omega,\calf,\P)$ is a complete probability space.
    \item[(ii)] $M$ is a  continuous-time Markov chain  on a finite set $S$ with cardinality $N$, having transition rates $\lambda(i,j)$ ($i,j\in S, i\neq j)$  and initial distribution $\pi_0$. 
We define   $\lambda(i,i)=-\sum_{j\neq i}\lambda(i,j)$, and denote by   $\Lambda$   the $N\times N$ matrix with entries $\lambda(i,j)$.
    \item[(iii)] 
$W$ is a standard Brownian motion in $\R^d$, independent of $M$.
    \item[(iv)] 
$A$, the set of control actions, is a Polish space with a  bounded metric (for instance, a bounded Borel set of a Euclidean space). $T>0$ denotes the time horizon of the control problem.
     \item[(v)] 
For some $q\ge 1$,
the function
\(
 h : \R^d\times \calp_q(\R^d\times A)\times A \times S\to \R^d
\)
is measurable and bounded: there exists $C\ge0$ such that
\[
|h(y,\mu,a,i)|\le C, \qquad
y\in\R^d,\,\mu\in \calp_q(\R^d\times A),\,a\in A,\,i\in S. 
\]
 \item[(vi)] 
The functions
\[
 f: \R^d\times \calp_q(\R^d\times A)\times A \times S\to \R,\qquad
g:  \R^d\times \calp_q(\R^d\times A)  \times S\to  \R,
\]
are measurable and satisfy the following growth condition: there exist  constants $C,\ell\ge0$ and an increasing function $\chi:\R_+\to\R_+$ such that
\begin{align}
\label{growthfg}
&|f( y, \nu , a,i)| + |g( y, \nu , i)|\le  C\,\Big(1+|y|^\ell+\chi(\|\nu\|_{W_q})\Big),
\end{align}
for every $\nu \in \calp_q(\R^d\times A)$, $y\in\R^d$, $a\in A$,  $i\in S$.

 \item[(vii)]  
The function
\(
\sigma: \R^d\to\R^{d\times d}
\)
is globally Lipschitz: there exists $C\ge0$ such that
\[
|\sigma(y)-\sigma(y')|\le C\,|y-y'|, \qquad y,y,'\in\R^d.
\]
Moreover,  the   matrix $\sigma(y)$ is invertible for every $y\in\R^d$.

\item[(viii)]
$y_0\in\R^d$ is    a (deterministic) initial state.

\end{enumerate}
\end{Hp}

\bigskip

We define the set $\cala^W$ of admissible control processes as the collection of all processes $\alpha:\Omega\times [0,T]\to A$ which are predictable with respect to $\F^W=(\calf^W_t)_{t\ge0}$, the completed Brownian  filtration of $W$.

It is convenient to adopt the following notation. We identify $f(y,\mu,a,\cdot)$ and $g(y,\mu,\cdot)$ with  $\R^N$-valued functions setting, for  $y\in\R^d$, $a\in A$ and  $\mu\in \calp_q(\R^d \times A)$,
\[
f(y,\mu,a)=
(f(y,\mu,a,i))_{i},
\quad
g(y,\mu )=
(g(y,\mu ,i))_{i},
\]
and we identify $h$ with the $\R^{N\times d}$-valued function
\[
 h\,(y,\mu,a)=
(  h^k(y,\mu,a,i))_{i,k},
\]
where  $h^k$ ($k=1,\ldots,d$) are the components of  $h$.

We are ready to state the separated optimal control problem: the observation process $Y$ is defined as the solution to
\begin{align} \label{eq:1Y}
\left\{
\begin{array}{rcl}dY_s&=&\sigma(Y_s)\,dW_s,\qquad s\in [0,T],
    \\
    Y_0&=&y_0.
    \end{array}
    \right.
\end{align}
Since $\sigma$ is assumed to be globally Lipschitz and invertible, the (completed) filtrations   generated by $Y$ and by $W$ are the same.
For $\alpha\in\cala^W$  the controlled   equation is
\begin{align}\label{eq:1}
\left\{
\begin{array}{rcl}
dX_s &=& \Lambda^T X_s  \,ds + diag(X_s)\,h \big(Y_s,\E\,\left[\langle X_s,1_N\rangle\,|\, (Y_s,\alpha_s)=(y,a))\right]\P_{Y_s,\alpha_s}(dy\,da),\alpha_s\big) dW_s,
    \\
    X_0&=&\pi_0,\qquad\qquad\qquad\qquad\qquad\qquad\qquad\qquad\qquad\qquad\qquad s\in [0,T],
    \end{array}
    \right.
\end{align}
and the reward functional is 
\begin{align}\label{funz:1}
J(\alpha)&= 
\nonumber
\; \E\bigg[\int_0^T \langle X_s ,f\big(Y_s, \E\,\left[\langle X_s,1_N\rangle\,|\, (Y_s,\alpha_s)=(y,a)\right]\,\P_{Y_s,\alpha_s}(dy\,da),\alpha_s\big)\rangle\,ds 
\\&\qquad \qquad + \langle X_T , g\big(Y_T, \E\,\left[\langle X_s,1_N\rangle\,|\, Y_T=y\right]\,\P_{Y_T}(dy)\big)\rangle \bigg].
\end{align}
Here, $\Lambda^T$ is the transpose matrix of $\Lambda$,  $diag(X_s)$ denotes the $N\times N$ matrix with the vector $X_s$ on the diagonal and the other entries equal to zero;   moreover, $1_N$ is the vector in $\R^N$ with all entries equal to \(1\) and   $\langle\cdot,\cdot\rangle$ is the scalar product in $\R^N$. We denote by 
\begin{align}
(y,a) \mapsto \E\,\left[\langle X_s,1_N\rangle\,|\, (Y_s,\alpha_s)=(y,a)\right]
\end{align}
any  measurable function $q_s:\R^d\times A\to\R$ 
such that $\E\,\left[\langle X_s,1_N\rangle\,|\, Y_s,\alpha_s\right]=q_s(Y_s,\alpha_s)$; $q_s$ is defined up to a $\P_{Y_s,\alpha_s}$-null set. 
We use the notation  
$$
\E\,\left[\langle X_s,1_N\rangle\,|\, (Y_s,\alpha_s)=(y,a)\right]\, \P_{Y_s,\alpha_s}(dy\,da)
\qquad \text{or simply}\qquad
\E\,\left[\langle X_s,1_N\rangle\,|\, (Y_s,\alpha_s)=\cdot\right]\, \P_{Y_s,\alpha_s} 
$$
for the probability measure on $\R^d\times A$ admitting the density $(y,a)\mapsto\E\,\left[\langle X_s,1_N\rangle\,|\, (Y_s,\alpha_s)=(y,a)\right]$ with respect to $  \P_{Y_s,\alpha_s}(dy\,da)$, provided that  $\E[\langle X_s,1_N\rangle]$ $=$ $1$. We define in a similar way terms like $\E\,\left[\langle X_s,1_N\rangle\,|\, Y_T=y\right]\,\P_{Y_T}(dy)$.
We note that in \eqref{eq:1} the initial distribution $\pi_0$
is identified with a vector in $\R^N_+$ satisfying $\langle \pi_0,1_N\rangle=1$.
\begin{Remark}
The function $q_s$ introduced above is not unique, but it is rather determined only up to $\P_{Y_s,\alpha_s}$-almost sure equality. This raises the question of measurability properties of the coefficients in \eqref{eq:1}-\eqref{funz:1}. We will see later, in paragraph \ref{subsec:spacesmeas}, that the measure 
$\E\,\left[\langle X_s,1_N\rangle\,|\, (Y_s,\alpha_s)=\cdot\right]\, \P_{Y_s,\alpha_s}$ can be written in the form $\Gamma(\P_{X_sY_s,\alpha_s})$ for an appropriate function $\Gamma$. The  explicit form of $\Gamma$ yields  well-posedness of the coefficients in \eqref{eq:1}-\eqref{funz:1}. 
\end{Remark}

In what follows, on the function $h$ we are going to impose a suitable form of Lipschitz condition, presented below. 
Recall that the number   $q\ge1$ and the function $\chi$ were introduced in Assumptions (\nameref{HpA1}).

\bigskip

\begin{HpSingle}[$\boldsymbol{A.2}$] \label{HphLipschitz}
    Let $r\ge2$. There exists a  constant $L\ge 0$ such that, whenever $\rho\in\calp_r(\R^d\times A)$,  and  $\nu_1,\nu_2\in\calp_q(\R^d\times A)$ are absolutely continuous with respect to $\rho$ with densities $\phi_1,\phi_2$ respectively,   the inequality
\begin{align*}
  &  \left| h( y, \nu_1 , a,i)- h( y, \nu_2 , a,i)\right|
\\&\quad\quad\le L\left(\int_{\R^d\times A} |\phi_1(y',a)-\phi_2(y',a)|^2\,  \rho(dy'\,da)\right)^{1/2}
    \cdot\Big( 1+ \chi(\|\rho\|_{\calw_r}) +\chi(\|\nu_1\|_{\calw_q})+\chi(\|\nu_2\|_{\calw_q} 
\Big)
\end{align*}
holds for every $y\in\R^d$, $a\in A$, $i\in S$. 
\end{HpSingle} 

\bigskip

Here we use the notation 
\[
\|\rho\|_{\calw_r}=\calw_r(\rho,\delta_{(0,a_0)})=\left(\int_{\R^d\times A} (|y|+d(a,a_0))^r\,  \rho(dy\,da)\right)^{1/r},
\]
(where $a_0\in A$ is any given point  and \(d\) denotes a complete metric on \(A\)) 
and similarly for $\|\nu_1\|_{\calw_q}$, $\|\nu_2\|_{\calw_q} $. In Section \ref{SectionExamples} the reader will find examples where Assumption (\nameref{HphLipschitz}) holds true. The occurrence of the terms containing the function $\chi$ is required to cover important concrete examples. However we note  that, in spite of the boundedness of $h$ and Assumption (\nameref{HphLipschitz}), the controlled state equation does not satisfy a global Lipschitz condition, even in the simpler case $\chi=0$.

The next result will be proved later in greater generality and we assume it for the moment: compare Theorem \ref{existstateeqprovv} and Proposition \ref{Jwelldef}.

\begin{Theorem} \label{existstateeq} Suppose that $p\ge 2$ and $r\ge 2$ satisfy $q\le r(1  -  p^{-1})$. Under Assumptions (\nameref{HpA1}) and (\nameref{HphLipschitz}), for every 
$\alpha\in\cala^W$    there exists a unique solution $(X,Y)$ to equations \eqref{eq:1Y}-\eqref{eq:1}, $\F^W$-adapted with continuous trajectories, such that $\langle X,1_N\rangle$ is a martingale and  satisfying
\[
    \E\, \Big[\sup_{s\in [0,T] }|Y_s|^r\Big]<\infty,\quad     
    \E\, \Big[
    \sup_{s\in [0,T] }|X_s|^p\Big]<\infty,\quad 
\P(X_s\in\R^N_+)=1, \quad \E\,[ \langle X_s,1_N\rangle]=1, \qquad s\in [0,T].
    \]
Moreover, if the constant $\ell$ in \eqref{growthfg} satisfies  $\ell\le r(1-p^{-1})$  then the reward functional $J(\alpha)$ is well defined and finite.
\end{Theorem}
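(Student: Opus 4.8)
The plan is to regard \eqref{eq:1Y}--\eqref{eq:1} as a McKean--Vlasov system and solve it by a fixed--point argument, exploiting two structural features: the equation for $Y$ is autonomous and involves neither $\alpha$ nor $X$, and the equation for $X$ is \emph{linear} in $X$ once the flow of marginal laws entering $h$ is frozen (here the boundedness of $h$ is crucial).
\textbf{Step 1: the $Y$-equation.} Since $\sigma$ is globally Lipschitz, hence of linear growth, \eqref{eq:1Y} has a unique $\F^W$-adapted continuous strong solution, and the usual moment estimates (Burkholder--Davis--Gundy plus Gronwall) give $\E[\sup_{s\le T}|Y_s|^{r'}]<\infty$ for every $r'\ge1$, in particular for the exponent $r$ in the statement. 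Because $\sigma(y)$ is invertible, the completed filtrations of $Y$ and $W$ coincide.

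\textbf{Step 2: a priori estimates for the frozen equation.} Fix a (deterministic, measurable) curve $\nu=(\nu_s)_{s\le T}$ in $\calp_q(\R^d\times A)$; then $dX_s=\Lambda^T X_s\,ds+diag(X_s)\,h(Y_s,\nu_s,\alpha_s)\,dW_s$, $X_0=\pi_0$, is a linear SDE with bounded $\F^W$-adapted coefficients, so it has a unique $\F^W$-adapted continuous solution with $\E[\sup_{s\le T}|X_s|^p]\le C_p$, where $C_p$ depends only on $p$, $T$, $|\pi_0|$, $\|\Lambda\|$ and $\sup|h|$ and is \emph{uniform in $\nu$}. Positivity $X_s\in\R^N_+$ follows from the variation-of-constants representation of the $i$-th component, $X^i_s=\mathcal F^i_s\pi_0^i+\int_0^s(\mathcal F^i_s/\mathcal F^i_r)\sum_{j\ne i}\lambda(j,i)X^j_r\,dr$ with $\mathcal F^i$ a strictly positive exponential, together with a monotone Picard iteration that preserves nonnegativity of all components (the off-diagonal rates $\lambda(j,i)$ being $\ge0$); equivalently, one may represent $X$ as an unnormalized Wonham filter built from an auxiliary chain. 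Finally, $\langle\Lambda^T X_s,1_N\rangle=\langle X_s,\Lambda 1_N\rangle=0$, so $\langle X_s,1_N\rangle$ is a local martingale, and it is a true martingale with $\E[\langle X_s,1_N\rangle]=\langle\pi_0,1_N\rangle=1$ because $\sum_i X^i_s\,h(Y_s,\nu_s,\alpha_s,i)$ is square-integrable on $[0,T]\times\Omega$ by boundedness of $h$ and the moment bound.

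\textbf{Step 3: the McKean--Vlasov fixed point.} Let $\mathcal M$ be the set of $\F^W$-adapted continuous processes $X$ with $\E[\sup_{s\le T}|X_s|^p]\le C_p$, $X_s\in\R^N_+$ and $\E[\langle X_s,1_N\rangle]=1$, metrized by the $\mathcal S^2$-distance; by Fatou this set is $\mathcal S^2$-complete. Define $\Phi(X)=\tilde X$ as the solution of the frozen equation with $\nu_s:=\E[\langle X_s,1_N\rangle\mid(Y_s,\alpha_s)=\cdot]\,\P_{Y_s,\alpha_s}=\Gamma(\P_{X_s,Y_s,\alpha_s})$ (the map $\Gamma$ of the Remark, whose explicit form, discussed in \ref{subsec:spacesmeas}, also provides the needed measurability of the coefficients). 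This is where $q\le r(1-p^{-1})$ is used: with $p'=p/(p-1)$, Hölder's inequality, $\E[\sup_s|Y_s|^r]<\infty$ and the boundedness of the metric on $A$ give $\nu_s\in\calp_q(\R^d\times A)$ with $\sup_{s\le T}\|\nu_s\|_{\calw_q}<\infty$, so $h(Y_s,\nu_s,\alpha_s,\cdot)$ is well defined; by Step 2, $\Phi(\mathcal M)\subseteq\mathcal M$. For $X^1,X^2\in\mathcal M$ with curves $\nu^1,\nu^2$, the key observation is that $\nu^1_s$ and $\nu^2_s$ are \emph{both absolutely continuous with respect to the same measure} $\P_{Y_s,\alpha_s}$, with densities $q^1_s,q^2_s$ (this holds precisely because $Y$ and $\alpha$ are unaffected by the coupling). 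Hence Assumption~(\nameref{HphLipschitz}), after absorbing into a constant $L'$ the $\chi$-factors (uniformly bounded on $\mathcal M$ by Step 2), and conditional Jensen applied to $q^j_s(Y_s,\alpha_s)=\E[\langle X^j_s,1_N\rangle\mid Y_s,\alpha_s]$, yield
\[
|h(Y_s,\nu^1_s,\alpha_s,i)-h(Y_s,\nu^2_s,\alpha_s,i)|\le L'\big(\E[|q^1_s(Y_s,\alpha_s)-q^2_s(Y_s,\alpha_s)|^2]\big)^{1/2}\le L'\sqrt N\,\big(\E[|X^1_s-X^2_s|^2]\big)^{1/2}.
\]
Inserting this into the standard $\mathcal S^2$ estimate for the linear, bounded-coefficient difference $\tilde X^1-\tilde X^2$ (split the diffusion increment as $diag(\tilde X^1-\tilde X^2)h(\nu^1)+diag(\tilde X^2)(h(\nu^1)-h(\nu^2))$, apply BDG and $\E|\tilde X^2_r|^2\le C_2$) gives $\E|\tilde X^1_s-\tilde X^2_s|^2\le C\int_0^s\E|X^1_r-X^2_r|^2\,dr$, hence a $(Cs)^n/n!$-type bound along iterations; thus $\Phi$ has a unique fixed point $X$ in $\mathcal M$, i.e. a unique solution $(X,Y)$ of \eqref{eq:1Y}--\eqref{eq:1} with the announced properties, and the same Gronwall estimate applied to two solutions yields uniqueness in the stated class.

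\textbf{Step 4: the reward functional, and the main obstacle.} Assume now $\ell\le r(1-p^{-1})$. From \eqref{growthfg}, $|\langle X_s,f(Y_s,\nu_s,\alpha_s)\rangle|\le C\,\langle X_s,1_N\rangle\big(1+|Y_s|^\ell+\chi(\|\nu_s\|_{\calw_q})\big)$; here $\chi(\|\nu_s\|_{\calw_q})\le\chi(\sup_{s\le T}\|\nu_s\|_{\calw_q})<\infty$ by Step 3, and Hölder with exponents $p,p'$ gives $\E[\langle X_s,1_N\rangle|Y_s|^\ell]\le(\E[\langle X_s,1_N\rangle^p])^{1/p}(\E|Y_s|^{\ell p'})^{1/p'}<\infty$ since $\ell p'\le r$; integrating over $s\in[0,T]$ handles the running term, and the terminal term $\langle X_T,g(Y_T,\bar\nu_T)\rangle$ with $\bar\nu_T=\Gamma(\P_{X_T,Y_T})$ is treated identically. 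The principal difficulty is Step 3: the controlled equation is not globally Lipschitz — $diag(X_s)h(\cdot)$ is only bilinear in $(X,h)$, and the law-dependence enters through the non-metric bound of Assumption~(\nameref{HphLipschitz}) with its $\chi$-weights. Both are circumvented by the structural facts that for frozen law the $X$-equation is linear, giving $\nu$-uniform moment bounds precisely because $h$ is bounded, and that the competing laws share the common reference $\P_{Y_s,\alpha_s}$, so the density-distance in~(\nameref{HphLipschitz}) collapses to $\E[|X^1_s-X^2_s|^2]^{1/2}$ and a plain Gronwall argument closes the estimate; keeping the $\chi$-weights bounded is exactly what forces the interplay between boundedness of $h$, the constraint $q\le r(1-p^{-1})$, and the a priori estimates. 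The finer, more general version of this argument is carried out in Theorem~\ref{existstateeqprovv} and Proposition~\ref{Jwelldef}.
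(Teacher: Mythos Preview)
Your proposal is correct and follows essentially the same architecture as the paper's proof of Theorem~\ref{existstateeqprovv} (to which Theorem~\ref{existstateeq} is explicitly deferred): solve the autonomous $Y$-equation first, derive $\nu$-uniform $\cals^p$ bounds for the linear frozen $X$-equation, then close a Picard-type fixed point using Assumption~(\nameref{HphLipschitz}) together with the crucial fact that competing input laws share the common reference $\P_{Y_s,\alpha_s}$, so the density-distance collapses to an $L^2$-moment via conditional Jensen.

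The one substantive divergence is the positivity argument. You invoke a variation-of-constants representation plus a monotone iteration (or, alternatively, a Wonham-filter identification); the paper instead performs the ``robust Zakai'' transformation $N^i_s=H^i_s\exp\big(-\int_t^s h^i_r\,dW_r\big)$, which turns the frozen SDE into a pathwise linear ODE $\dot N=A(s)N$ with nonnegative off-diagonal entries, and then applies Nagumo's tangent-cone invariance theorem to conclude $N_s\in\R^N_+$. Your route is more elementary and self-contained; the paper's route avoids having to justify convergence of a nonstandard iteration with unbounded random kernels $\mathcal F^i_s/\mathcal F^i_r$, and handles all components simultaneously. A minor cosmetic difference is that you run the contraction in the $\cals^2$-metric on a ball defined by the $\cals^p$ a priori bound (using Fatou for closedness), whereas the paper works directly in $\cals^p$; both are fine since $p\ge2$.
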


Under the conditions of this Theorem,  the
separated problem is well defined and we can introduce its value 
\begin{equation}
\label{valueseparatedoriginal}
V_0=\sup_{\alpha\in\cala^W}
J(\alpha).
\end{equation}

 \subsection{Equivalence with the partially observed control problem}

In this paragraph we clarify the connection of the separated control problem and the original partially observed optimal control problem 
\eqref{stateeq1}-\eqref{rewardfunz1}.
Throughout, we suppose that Assumptions (\nameref{HpA1}) and (\nameref{HphLipschitz}) hold true and that the constants  $\ell$ in \eqref{growthfg} and \(q \geq 1\) satisfy the conditions  $q, \ell\le r(1-p^{-1})$.   

Denote by $\F^M=(\calf^M_t)_{t\ge0}$ the filtration generated by the Markov chain $M$ and by $\G=(\calg_t)_{t\ge0}$ the filtration defined setting  $\calg_t=\calf^W_t\vee \calf^M_t$. Using the independence of $W$ and $M$ it is not difficult to verify that $W$ is a Wiener process with respect to $\G$.
For every $\alpha\in\cala^W$, we denote by $X^\alpha$ the solution to the state equation   \eqref{eq:1}
given by  Theorem \ref{existstateeq}.
Let us also define the processes 
\[
h^\alpha_s= 
h \big(Y_s,\E\,\left[\langle X_s^\alpha,1_N\rangle\,|\, (Y_s,\alpha_s)=\cdot\right]\,\P_{Y_s,\alpha_s},\alpha_s,M_s\big), \qquad s\in [0,T],
\]
\[
L^\alpha_s=\exp\left(\int_0^s h^\alpha_r\,dW_r- \frac12 \int_0^s |h^\alpha_r|^2\,dr\right),
\qquad
B^\alpha_s=W_s-\int_0^s
h^\alpha_r\,dr,
\qquad s\in [0,T].
\]
The process $h^\alpha$ is progressively measurable with respect to $\G$, so that the stochastic integral in the definition of $L^\alpha$ is well defined. Since $h^\alpha$ is also bounded, $L^\alpha$ is a $\G$-martingale and we can define a probability $\Q^\alpha$ setting $d\Q^\alpha=L^\alpha_T\,d\P$. With this notation we have the following result.

\begin{Proposition}\label{equivpartialobs}
 \begin{enumerate}
     \item [(i)]
     Under $\Q^\alpha$, the process $B^\alpha$ is a $\G$-Brownian motion on $[0,T]$.
     \item [(ii)]      Under $\Q^\alpha$, the process $M$ remains a Markov chain with transition rates $\lambda(i,j)$ and initial distribution $\pi_0$.
     \item [(iii)] $M$ and $B^\alpha$ are indepedent under $\Q^\alpha$.
     \item [(iv)] 
     For $i\in S$ and $s\in[0,T]$ we have
     \begin{align}
X_s^i=\E\, [1_{\{M_s=i\}}\,L^\alpha_s\,|\, \calf^W_s].
     \end{align}
     \item [(v)] We have 
     \begin{align}
\E\,\left[\langle X_s^\alpha,1_N\rangle\,|\, (Y_s,\alpha_s)=\cdot\right]\,\P_{Y_s,\alpha_s}=
\Q^\alpha_{Y_s,\alpha_s},
\end{align}
and the process $Y$ satisfies 
\begin{align}\label{stateeq1bis}
    \left\{
\begin{array}{rcl}
dY_s&=& \sigma(Y_s)\,h(Y_s,\Q^\alpha_{Y_s,\alpha_s},\alpha_s,M_s)\,ds + \sigma(Y_s)\,dB^\alpha_s,\qquad s\in   [0,T],
\\
Y_0&=&y_0.
    \end{array}
    \right.
\end{align}
\item[(vi)] 
The reward functional $J(\alpha)$ 
defined in \eqref{funz:1} satisfies
\begin{align}\label{rewardfunz1bis}
    J(\alpha)=
\E_{\Q^\alpha}\left[\int_0^T f(Y_s,\Q^\alpha_{Y_s,\alpha_s},\alpha_s,M_s)\,ds + g(Y_T,\Q^\alpha_{Y_T},M_T)\right].
\end{align}

     \end{enumerate}   
\end{Proposition}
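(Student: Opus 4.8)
The plan is to establish items (i)--(iii) as a single application of Girsanov's theorem adapted to the filtration $\G$, then derive the filtering identity (iv) from a conditional Bayes formula, deduce (v) by identifying the unnormalized filter with the $\Q^\alpha$-density of $(Y_s,\alpha_s)$, and finally obtain (vi) by a change-of-measure computation together with the projection property of conditional expectation.

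\textbf{Steps (i)--(iii): Girsanov and preservation of the Markov chain.} First I would recall that, by Assumption (\nameref{HpA1}), $W$ is a $\G$-Brownian motion (as noted in the text, using independence of $W$ and $M$), that $h^\alpha$ is $\G$-progressively measurable and bounded, hence $L^\alpha$ is a true $\G$-martingale (Novikov is trivially satisfied). By Girsanov's theorem, $B^\alpha = W - \int_0^\cdot h^\alpha_r\,dr$ is a $\G$-Brownian motion under $\Q^\alpha$, giving (i). For (ii) and (iii), the key observation is that the drift $h^\alpha$ affects only the $W$-component: since $M$ has no martingale part driven by $W$ and the density $L^\alpha$ is a stochastic exponential of a $W$-integral, the law of $M$ is unchanged. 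Concretely, I would show that for every bounded measurable $\Phi$ and every $\G$-stopping-time-free functional, $\E_{\Q^\alpha}[\Phi(M)\,\Psi(B^\alpha)] = \E_{\Q^\alpha}[\Phi(M)]\,\E_{\Q^\alpha}[\Psi(B^\alpha)]$ and that the $\Q^\alpha$-law of $M$ equals its $\P$-law; the cleanest route is to check that for $\phi: S \to \R$, the process $\phi(M_t) - \phi(M_0) - \int_0^t (\Lambda \phi)(M_r)\,dr$ remains a $\G$-martingale under $\Q^\alpha$ (its quadratic covariation with $\int h^\alpha\,dW$ vanishes because $M$ is of finite variation and independent of $W$ under $\P$), so $M$ still solves the martingale problem for $\Lambda$ with initial law $\pi_0$; independence of $M$ and $B^\alpha$ follows similarly by computing the joint characteristic functional.

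\textbf{Steps (iv)--(v): identification of the unnormalized filter.} For (iv) I would invoke the Kallianpur--Striebel / conditional Bayes formula: for any $\Fc_s^W$-measurable bounded $Z$,
\[
\E[1_{\{M_s=i\}} L^\alpha_s Z] = \E_{\Q^\alpha}[1_{\{M_s=i\}} Z],
\]
and comparing with the defining SDE \eqref{eq:1} for $X^i$ one checks $X^i_s = \E[1_{\{M_s=i\}}L^\alpha_s \mid \Fc^W_s]$; the rigorous argument is that both sides solve the same linear SDE (the Zakai equation for the $i$-th coordinate), which follows from It\^o's formula applied to $1_{\{M_s=i\}}L^\alpha_s$ using the forward equation for $M$ and the dynamics of $L^\alpha$, taking $\Fc^W_s$-conditional expectations and using that $\langle X_s,1_N\rangle = \E[L^\alpha_s\mid\Fc_s^W]$ generates exactly the drift/diffusion structure in \eqref{eq:1} — here uniqueness for the linear Zakai SDE pins down $X^i$. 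For (v), summing (iv) over $i$ gives $\langle X_s,1_N\rangle = \E[L^\alpha_s\mid\Fc^W_s]$, and then for bounded measurable $\psi:\R^d\times A\to\R$, using that $(Y_s,\alpha_s)$ is $\Fc^W_s$-measurable,
\[
\E_{\Q^\alpha}[\psi(Y_s,\alpha_s)] = \E[\psi(Y_s,\alpha_s)L^\alpha_s] = \E[\psi(Y_s,\alpha_s)\langle X_s,1_N\rangle] = \E[\psi(Y_s,\alpha_s)\,\E[\langle X_s,1_N\rangle\mid Y_s,\alpha_s]],
\]
which is exactly $\int \psi(y,a)\,\E[\langle X_s,1_N\rangle\mid(Y_s,\alpha_s)=(y,a)]\,\P_{Y_s,\alpha_s}(dy\,da)$; hence this measure equals $\Q^\alpha_{Y_s,\alpha_s}$. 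The SDE \eqref{stateeq1bis} then follows by substituting $W_s = B^\alpha_s + \int_0^s h^\alpha_r\,dr$ into \eqref{eq:1Y} and rewriting $h^\alpha_s = h(Y_s,\Q^\alpha_{Y_s,\alpha_s},\alpha_s,M_s)$ using the identity just proved. The identity $\Q^\alpha_{Y_T} = \E[\langle X_T,1_N\rangle\mid Y_T=\cdot]\,\P_{Y_T}$ is obtained the same way with $\psi=\psi(y)$.

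\textbf{Step (vi): the reward functional.} Finally, starting from \eqref{funz:1}, I would write $J(\alpha) = \E[\int_0^T \langle X_s, f(Y_s,\mu_s,\alpha_s)\rangle\,ds + \langle X_T, g(Y_T,\bar\mu_T)\rangle]$ with $\mu_s = \Q^\alpha_{Y_s,\alpha_s}$ and $\bar\mu_T = \Q^\alpha_{Y_T}$ by (v). Since $f(Y_s,\mu_s,\alpha_s)$ and $g(Y_T,\bar\mu_T)$ are $\Fc^W_s$- resp. $\Fc^W_T$-measurable (once $\mu_s,\bar\mu_T$ are recognized as deterministic measures depending only on the law), I can replace $X_s$ by $\E[1_{\{M_s=\cdot\}}L^\alpha_s\mid\Fc^W_s]$ from (iv), pull the conditional expectation out, and collapse $\langle X_s, f(Y_s,\mu_s,\alpha_s)\rangle = \E[\,f(Y_s,\mu_s,\alpha_s,M_s)L^\alpha_s\mid\Fc^W_s]$; taking total expectation and using $\E[L^\alpha_s\,\xi] = \E_{\Q^\alpha}[\xi]$ for $\xi = f(Y_s,\mu_s,\alpha_s,M_s)$ (and the analogous terminal term with $L^\alpha_T$), together with Fubini, yields \eqref{rewardfunz1bis}. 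The integrability needed to justify Fubini and the conditioning manipulations is supplied by Theorem \ref{existstateeq} (finiteness of $J(\alpha)$, the moment bounds on $X,Y$) and boundedness of $L^\alpha$ on $[0,T]$ uniformly is not needed — only $L^\alpha \in L^1$ and the growth bound \eqref{growthfg}.

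\textbf{Main obstacle.} I expect the delicate point to be step (iv): rigorously identifying $X^i_s$ with the conditional expectation $\E[1_{\{M_s=i\}}L^\alpha_s\mid\Fc^W_s]$. This requires either (a) deriving the Zakai equation for $t\mapsto\E[1_{\{M_t=i\}}L^\alpha_t\mid\Fc^W_t]$ from scratch via It\^o's formula plus a conditional (stochastic Fubini / filtering) argument, and then invoking the uniqueness part of Theorem \ref{existstateeq} — but note that theorem's uniqueness is stated for the McKean--Vlasov system \eqref{eq:1}, so one must be careful that the filter process indeed satisfies \eqref{eq:1} with the \emph{same} self-consistent measure argument, which is where (v) and (iv) are intertwined and a small fixed-point/bootstrap argument (or a Gronwall estimate on the difference) is needed; or (b) proving it by the Kallianpur--Striebel formula directly and separately checking the resulting process solves \eqref{eq:1}. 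The circularity between (iv) and (v) — the measure in the coefficients of \eqref{eq:1} is defined through $X$ itself — must be handled by treating the pair $(X,\mu)$ jointly and using uniqueness for the coupled system, rather than solving one at a time.
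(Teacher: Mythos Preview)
Your overall strategy matches the paper's proof closely, and steps (v)--(vi) are essentially identical. Two points deserve comment.

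\textbf{On (iv) and the alleged circularity.} Your ``main obstacle'' overstates the difficulty. There is no genuine circularity. The process $X^\alpha$ is \emph{given} as the solution to \eqref{eq:1} by Theorem~\ref{existstateeq}; hence the measure-valued process $\mu_s^\alpha := \E[\langle X_s^\alpha,1_N\rangle\mid (Y_s,\alpha_s)=\cdot]\,\P_{Y_s,\alpha_s}$ and the process $h^\alpha_s = h(Y_s,\mu_s^\alpha,\alpha_s,M_s)$ are \emph{fixed} before you touch filtering. The Zakai equation for $\rho_s^i := \E[1_{\{M_s=i\}}L^\alpha_s\mid\calf^W_s]$ then reads
\[
d\rho_s^i = \sum_{j}\lambda_{ji}\rho_s^j\,ds + \rho_s^i\,h(Y_s,\mu_s^\alpha,\alpha_s,i)\,dW_s,
\]
with the \emph{same} frozen $\mu_s^\alpha$ in the coefficient. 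But $X^\alpha$ satisfies exactly this equation too (that is what \eqref{eq:1} says). So both $\rho$ and $X^\alpha$ solve the \emph{same linear} SDE with bounded coefficients and identical initial condition $\pi_0$; ordinary pathwise uniqueness for linear SDEs gives $\rho = X^\alpha$. You do not need the McKean--Vlasov uniqueness of Theorem~\ref{existstateeq}, and no bootstrap or coupled fixed-point argument is required. This is precisely how the paper proceeds.

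\textbf{On (iii).} Your proposed route via ``joint characteristic functional'' is vague and would need work. The paper instead introduces the enlarged filtration $\calh_t = \calf_t^W \vee \calf_\infty^M$, observes that $W$ is still an $\H$-Brownian motion under $\P$ (by independence of $W$ and $M$), applies Girsanov in $\H$ to conclude $B^\alpha$ is an $\H$-Brownian motion under $\Q^\alpha$, and hence $B^\alpha$ is independent of $\calh_0 \supseteq \calf_\infty^M$. This is a one-line argument once the right filtration is chosen, and cleaner than what you sketch.
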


\begin{proof}
The proofs of $(i)$-$(iii)$ essentially follow from Proposition 3.13 in \cite{BainCrisan}. We sketch a proof for the reader's convenience. 

 \begin{enumerate}
\item [(i)] This follows immediately from the Girsanov theorem.
\item [(ii)] Since $L^\alpha_0=1$, $\P$ and $\Q^\alpha$ agree on $\calg_0$. It follows that the initial distribution of $M$ under $\P$ and $\Q^\alpha$ is the same and it is equal to $\pi_0$.

Let $T_n$ denote the jumps times of the Markov chain $M$ and, for every $j\in S$, let us define the counting processes associated with $M$:
\[
N^j_s= \sum_{n\ge 1} 1_{T_n\le s }\,1_{M_{T_n}=j}, \qquad s\ge 0.
\]
Let us write $\lambda(i,j)$ instead of $\lambda_{ij}$, for typographical convenience. It is known that, given the initial distribution, the law of $M$ is entirely determined by the fact that  the compensated counting processes
\[
m^j_s:=N^j_s- \int_{0}^s  \lambda(M_{r-},j)\,dr, \qquad s\ge 0,
\]
are $\F^M$-martingales   under $\P$. By the independence of $M$ and $W$,  $m^j$ are also $\G$-martingales. 
Since $L^\alpha$ is a continuous $\G$-martingale   while $m^j$ have finite variation trajectories, they are orthogonal, meaning that their products $m^jL^\alpha$ are $\G$-martingales under $\P$. Then a standard verification shows that 
$m^j$ are $\G$-martingales under $\Q^\alpha$. Since $m^j$ are adapted with respect to $\F^M$ we conclude that  $m^j$ are $\F^M$-martingales under $\Q^\alpha$. By the characterization property mentioned above,   $M$ is a Markov chain with transition rates $\lambda(i,j)$ under $\Q^\alpha$.

\item[(iii)] Let us define another filtration
$\H=(\calh_t)_{t\ge0}$ setting $\calh_t=\calf^W_t\vee\calf^M_\infty$. 
Since $\H$ is an enlargement of $\G$, the process $h^\alpha$ is also progressively measurable with respect to $\H$.
Since  $W$ is independent of   $M$,  $W$ is a Wiener process with respect to $\H$ as well, under $\P$. We may now apply the Girsanov theorem in the filtration $\H$ and conclude that 
$B^\alpha$ is a Wiener process with respect to $\H$  under $\Q^\alpha$, and in particular independent of $\calh_0$. Since $\calf^M_\infty\subset \calh_0$ (in fact, they are equal up to null sets) we conclude that $B^{\alpha}$ is independent of  $M$.

\item[(iv)] By the definition of $B^\alpha$ we have
$dY_s= \sigma(Y_s) \big(h^\alpha_s\,ds+dB^\alpha_s\big)$, namely $Y$ satisfies
\begin{align}\label{obsproc}
\left\{
\begin{array}{rcl}
    dY_s &=& \sigma(Y_s) \Big( h \big(Y_s,\E\,\left[\langle X_s^\alpha,1_N\rangle\,|\, (Y_s,\alpha_s)=\cdot\right]\,\P_{Y_s,\alpha_s},\alpha_s,M_s\big)\,ds + dB^\alpha_s \Big),
    \\
    Y_0&=&y_0.
    \end{array}
    \right.
\end{align}
We look at the processes $M,Y$ from the point of view of filtering theory: in the probability space $(\Omega,\calf,\Q^\alpha)$, $M$ is the (unobserved) signal process and $Y$ is the observation process, which generates the filtration $\F^W$. The processes  $M,Y,\alpha,B^\alpha$ are $\G$-progressive, $M$ is a Markov chain and $B^\alpha$ is an independent Brownian motion. We note that
\[
(L^\alpha_s)^{-1}=\exp\left(-\int_0^s h^\alpha_r\,dB^\alpha_r- \frac12 \int_0^s |h^\alpha_r|^2\,dr\right),
\qquad s\in [0,T],
\]
is a $\G$-martingale under $\Q^\alpha$
and 
$d\P=(L^\alpha_T)^{-1}\, d\Q^\alpha$.  For $i\in S$,
let us denote by $\rho_s$ the unnormalized conditional law of  $M_s$, namely 
$\rho_s^i=\E\, [1_{\{M_s=i\}}\,L^\alpha_s\,|\, \calf^W_s].
$  More precisely, we define $\rho^i$ to be the optional projection of $1_{\{M=i\}}\,L^\alpha$ on the filtration $\F^W$. Noting that $\alpha$ is $\F^W$-predictable, we can deduce in the usual way that the Zakai filtering equation holds: it takes the form,
 for every $i\in\ S$,
\begin{align*}
\left\{
\begin{array}{rcl}
    d\rho_s^i &=& \sum_{j\in S}\lambda_{ji}\,\rho^j_s  \,ds + \rho_s^i\, h \Big(Y_s,\E\,\left[\langle X_s^\alpha,1_N\rangle\,|\, (Y_s,\alpha_s)=\cdot\right]\,\P_{Y_s,\alpha_s},\alpha_s,i\Big)\,dW_s,
    \\
    \rho_0&=&\pi_0.
    \end{array}
    \right.
\end{align*}
Since this equation coincides with the equation \eqref{eq:1} satisfied by $X^\alpha$ we conclude that $\rho$ coincides with $X^\alpha$ and point (iv) is proved.

\item[(v)] In view of \eqref{obsproc} it is enough to prove the equality
\begin{align}\label{identiflaw}
\E\,\left[\langle X_s^\alpha,1_N\rangle\,|\, (Y_s,\alpha_s)=\cdot\right]\,\P_{Y_s,\alpha_s}=
\Q^\alpha_{Y_s,\alpha_s}.
\end{align}
For any bounded measurable $\phi:\R^d\times A\to\R$,
\begin{align*}
\int_{\R^d\times A}\phi(y,a)\,\Q^\alpha_{Y_s,\alpha_s}(dy\,da) = \E_{\Q^\alpha}[\phi(Y_s,\alpha_s)]= \E\, [\phi(Y_s,\alpha_s)L^\alpha_s]=
\E\, [\phi(Y_s,\alpha_s)\,\E\,[L^\alpha_s\,|\,\calf_s^W]]    .
\end{align*}
From point (iv) it follows that 
$\E\,[L^\alpha_s\,|\,\calf_s^W]= \langle X_s^\alpha,1_N\rangle$ and we obtain
\begin{align*}
\int_{\R^d\times A}\phi(y,a)\,\Q^\alpha_{Y_s,\alpha_s}(dy\,da) &= \E\, [\phi(Y_s,\alpha_s)\,\langle X_s^\alpha,1_N\rangle]
= \E\, [\phi(Y_s,\alpha_s)\,\E\,[\langle X_s^\alpha,1_N\rangle\,|\,Y_s,\alpha_s]]
\\&=
\int_{\R^d\times A}\phi(y,a)\,
\E\,[\langle X_s^\alpha,1_N\rangle\,|\,(Y_s,\alpha_s)=(y,a)]\,\P_{Y_s,\alpha_s}(dy\, da),
\end{align*}
which proves \eqref{identiflaw}.

\item[(vi)] Arguing as for \eqref{identiflaw} one can prove that
\begin{align}\label{identiflawdue}
\E\,\left[\langle X_T^\alpha,1_N\rangle\,|\, Y_T=\cdot\right]\,\P_{Y_T}=
\Q^\alpha_{Y_T}.
\end{align}
Then the conclusion follows by standard arguments, computing the right-hand side of \eqref{funz:1}  using conditioning, and taking into account point (iv)  and the equalities  \eqref{identiflaw}-\eqref{identiflawdue}.
\end{enumerate} 
\end{proof}

This proposition shows that the separated problem introduced above is equivalent to a weak version of the partially observed control problem. Indeed, for every admissible control $\alpha$, the reward functional $J(\alpha)$ of the separated problem can be written via equations 
\eqref{stateeq1bis}-\eqref{rewardfunz1bis}
which coincide with 
\eqref{stateeq1}-\eqref{rewardfunz1} except for the fact that the Brownian motion $B^\alpha$  as well as the probability $\Q^\alpha$ depend on $\alpha$. On the contrary, in the separated problem the process $W$ and the filtration $\F^W$ do not depend on $\alpha$ (which was not the case for the original partially observed control problem). 

In the rest of this paper we will deal with the separated control problem \eqref{eq:1Y}-\eqref{funz:1} or its extensions,   leaving aside the original partially observed problem, except in Proposition \ref{AeAW} and Remark \ref{verifoptpartialobs} where we explain how the separated problem gives a solution to the partially observed problem formulated above.

\section{Dynamic programming and value function}
\label{sec-dynprog}

\subsection{Dynamic programming}

In this section we start the study of  the separated control problem \eqref{eq:1Y}-\eqref{funz:1} by the dynamic programming approach. The first step consists in embedding the separated problem into a larger family of optimization problems. First, we will consider a possibly different starting time $t\in[0,T]$ instead of time $0$. Second, in order to deal with the dependence of the coefficients on the marginal laws of the controlled process, we also need to replace the deterministic initial condition $y_0$ and $\pi_0$ (for the processes $Y$ and $X$) by random variables  $\xi$ and $\eta$. It is convenient to require that they depend on an additional source of noise and for this reason, beside Assumptions (\nameref{HpA1}) and (\nameref{HphLipschitz}), we will assume the following.

\bigskip

\begin{HpSingle}[$\boldsymbol{A.3}$] \label{HpU}
There exists a random variable $U$ having uniform distribution on $(0,1)$ and independent of $M,W$ defined
on the probability space  $(\Omega,\calf,\P)$.

We introduce the filtration $\F=(\calf_t)_{t\ge0}$, where $\calf_t=\calf_t^W\vee \sigma(U)$, and we denote by $\cala$  the collection of all $\F$-predictable processes $\alpha:\Omega\times [0,T]\to A$.
\end{HpSingle}
\bigskip

In what follows, it is convenient to consider the larger class $\cala$ as the class of admissible control processes.
We   rewrite the state equations \eqref{eq:1Y}-\eqref{eq:1}    incorporating the previous  extensions: for any  time $t\in [0,T]$ and any $\alpha\in\cala$, 
we consider the following system for a controlled pair $(X_s,Y_s)_{s\in [t,T]}$ with values in $\R^N\times \R^d$:
\begin{align} \label{provveq:1}
\left\{
\begin{array}{rcl}
    dX_s &=& \Lambda^T X_s  \,ds + diag(X_s)\,h \Big(Y_s,\E\,\left[\langle X_s,1_N\rangle\,|\, (Y_s,\alpha_s)=(y,a)\right]\,\P_{Y_s,\alpha_s}(dy\,da),\alpha_s\Big)\,dW_s,
\\dY_s&=&\sigma(Y_s)\,dW_s,
    \\
    X_t&=&\eta,
    \\
    Y_t&=&\xi,
    \end{array}
    \right.
\end{align}
and the reward functional
\begin{align}\label{provvfunz:1}
J(t,\eta,\xi,\alpha)&=
\nonumber
\E\bigg[\int_t^T \big\langle X_s ,f\big(Y_s, \E\,\left[\langle X_s,1_N\rangle\,|\, (Y_s,\alpha_s)=(y,a)\right]\,\P_{Y_s,\alpha_s}(dy\,da),\alpha_s\big)\big\rangle\,ds 
\\&\quad + \big\langle X_T , g\big(Y_T, \E\,\left[\langle X_T,1_N\rangle\,|\, Y_T=y\right]\,\P_{Y_T}(dy)\big)\big\rangle \bigg].
\end{align}
The equations 
\eqref{provveq:1}-\eqref{provvfunz:1} provide the form of the separated problem required for dynamic programming arguments.
Note that, in order to reduce to a Markovian setting, we introduce the process $Y$ as another component of the state, although it is not affected by the control.
The   random initial condition $(\eta,\xi)$, valued in $\R^N\times \R^d$, is assumed to be   $\calf_t$-measurable and to satisfy appropriate integrability conditions stated below.

In what follows, we use the symbol $\|\cdot\|_p$ to denote the usual $L^p$ norm for any random variable with values in Euclidean space.
Our first result is  well-posedness of the state equation 
\eqref{provveq:1}:

\begin{Theorem} \label{existstateeqprovv} Suppose that Assumptions (\nameref{HpA1}), (\nameref{HphLipschitz}), (\nameref{HpU}) hold true and assume that $p\ge 2$ and $r\ge 2$ satisfy
\[
q\le r\left(1-\frac{1}{p}\right).
\]
Let $(\eta,\xi)$ be $\R^N\times \R^d$-valued and $\calf_t$-measurable satisfying 
\[
 \|\xi\|_r<\infty, \qquad \|\eta\|_p <\infty, \qquad
\P(\eta\in\R^N_+)=1,\qquad
\E\,[\langle \eta,1_N\rangle]=1.
\]

Then, for every 
$\alpha\in\cala$,    there exists a unique solution $(X,Y)$ to equation \eqref{provveq:1}, where  $X$ and $Y$ are $\F$-adapted with continuous trajectories,  $\langle X,1_N\rangle$ is a martingale,  and 
    \[
    \E\, \Big[\sup_{s\in [t,T] }|Y_s|^r\Big]<\infty,\quad 
    \E\, \Big[\sup_{s\in [t,T] }|X_s|^p\Big]<\infty,\quad 
    \P(X_s\in\R^N_+)=1, \quad \E\,[ \langle X_s,1_N\rangle]=1, \qquad s\in [t,T].
    \]
Moreover there exists a constant $C_0$, depending only on $p$, $r$, $T$, $\Lambda$, $\sup |h|$ and the Lipschitz constant of $\sigma$, such that
\begin{align}
    \label{stap}
    \E\, \Big[\sup_{s\in [t,T] }|X_s|^p\Big]&\le C_0\, \|\eta\|_p^p,
\\
    \label{stapY}
    \E\, \Big[\sup_{s\in [t,T] }|Y_s|^r\Big]&\le  C_0\,(1+ \|\xi\|_r^r).
\end{align}
\end{Theorem}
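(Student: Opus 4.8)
The argument exploits the special structure of \eqref{provveq:1}: the $Y$-component is an autonomous SDE, completely decoupled from $X$ and from $\alpha$, while the $X$-equation is \emph{linear} in $X$ once the measure argument of $h$ is frozen, and it carries a McKean--Vlasov dependence only through the scalar conditional expectation $q_s(y,a)=\E[\langle X_s,1_N\rangle\,|\,(Y_s,\alpha_s)=(y,a)]$. \emph{Step 1 (observation process).} Since $\sigma$ is globally Lipschitz, hence of linear growth, and $\|\xi\|_r<\infty$ with $r\ge 2$, the equation $dY_s=\sigma(Y_s)\,dW_s$, $Y_t=\xi$, has a unique $\F^W$-adapted continuous strong solution, and Burkholder--Davis--Gundy together with Gronwall give \eqref{stapY} with $C_0$ depending only on $r,T$ and the Lipschitz constant of $\sigma$. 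In particular $\rho:=\P_{Y_s,\alpha_s}\in\calp_r(\R^d\times A)$ (here $A$ is bounded), and this measure is now a fixed datum: every measure occurring in the $X$-equation is absolutely continuous with respect to it, which is exactly the framework of Assumption (\nameref{HphLipschitz}). \emph{Step 2 (a priori $L^p$-bound and conservation law).} For any solution $X$ in the stated class, Itô's formula for $|X_s|^p$, the estimates $|\Lambda^T x|\le|\Lambda|\,|x|$ and $|diag(x)\,h(\cdots)|\le(\sup|h|)\,|x|$, BDG and Gronwall yield \eqref{stap} with $C_0$ depending only on $p,T,\Lambda,\sup|h|$ --- crucially \emph{independent of the measure argument}, because $h$ is bounded. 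Moreover $\langle\Lambda^T X_s,1_N\rangle=\langle X_s,\Lambda 1_N\rangle=0$ since the rows of $\Lambda$ sum to $0$, so $\langle X_s,1_N\rangle$ is a continuous local martingale; with $p\ge2$ it is a true martingale, hence $\E[\langle X_s,1_N\rangle]=\E[\langle\eta,1_N\rangle]=1$ and $q_s\,\P_{Y_s,\alpha_s}$ is a genuine probability measure.

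\emph{Step 3 (frozen linear equation and positivity).} For a fixed measurable flow $s\mapsto\mu_s$ of probability measures on $\R^d\times A$, the equation obtained from \eqref{provveq:1} by replacing the measure argument of $h$ with $\mu_s$ is a linear SDE in $X$ with bounded predictable coefficients, hence uniquely solvable; writing $X^i_s=Z^i_s R^i_s$, with $Z^i$ the Dol\'eans--Dade exponential of $\int_t^{\cdot} h(Y_r,\mu_r,\alpha_r,i)\,dW_r$, reduces it pathwise to the random linear ODE
\[
\dot R^i_s=\sum_{j\in S}(Z^i_s)^{-1}\lambda(j,i)\,Z^j_s\,R^j_s,\qquad R^i_t=\eta^i .
\]
The matrix $\big((Z^i_s)^{-1}\lambda(j,i)Z^j_s\big)_{i,j}$ is Metzler (its off-diagonal entries are $\ge0$, since $\lambda(j,i)\ge0$ for $j\ne i$), hence leaves $\R^N_+$ invariant, so $\eta\in\R^N_+$ forces $R_s\in\R^N_+$ and therefore $X_s\in\R^N_+$ for all $s$. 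Applying this with $\mu_s$ equal to the measure built from any given process shows that every solution of the full equation, and every Picard iterate below, stays in $\R^N_+$.

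\emph{Step 4 (Picard iteration for the McKean--Vlasov coupling).} Start from $X^0_s=e^{(s-t)\Lambda^T}\eta$ (independent of $W$, valued in $\R^N_+$ since $e^{u\Lambda^T}$ is the transpose of a stochastic matrix, with $\langle X^0_s,1_N\rangle=\langle\eta,1_N\rangle$), and set inductively $\mu^{n+1}_s=\E[\langle X^n_s,1_N\rangle\,|\,(Y_s,\alpha_s)=\cdot]\,\P_{Y_s,\alpha_s}$, letting $X^{n+1}$ solve the corresponding frozen linear equation. By Steps 2--3, each $X^{n+1}$ is in $\R^N_+$, has $\langle X^{n+1},1_N\rangle$ a mean-one martingale, and obeys \eqref{stap} \emph{uniformly in $n$}. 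Assumption (\nameref{HphLipschitz}) and the hypothesis $q\le r(1-p^{-1})$ enter in the contraction estimate: since $\mu^{n+1}_s,\mu^n_s$ are both absolutely continuous with respect to the fixed $\rho=\P_{Y_s,\alpha_s}\in\calp_r$, Assumption (\nameref{HphLipschitz}), conditional Jensen and the tower property give the \emph{deterministic} bound
\[
\sup_{\omega}\big|h(Y_s,\mu^{n+1}_s,\alpha_s,i)-h(Y_s,\mu^n_s,\alpha_s,i)\big|\ \le\ L\,C_\chi\,\big\|\langle X^n_s-X^{n-1}_s,1_N\rangle\big\|_2,
\]
where $C_\chi$ is controlled by $\chi(\|\rho\|_{\calw_r})+\chi(\|\mu^{n+1}_s\|_{\calw_q})+\chi(\|\mu^n_s\|_{\calw_q})$; by positivity and the tower property $\|\mu^n_s\|_{\calw_q}^q=\E\big[(|Y_s|+d(\alpha_s,a_0))^q\,\langle X^{n-1}_s,1_N\rangle\big]$, and H\"older with exponents $p$ and $p/(p-1)$ makes this finite and bounded uniformly in $n$ \emph{precisely} when $qp/(p-1)\le r$, i.e. $q\le r(1-p^{-1})$, using \eqref{stap} and \eqref{stapY}. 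Inserting the displayed bound into Itô's formula for $|X^{n+1}_s-X^n_s|^2$ (the drift term is a Gronwall term since $\Lambda^T$ is linear; the diffusion term is estimated by BDG together with the displayed bound times $\E[|X^n_s|^2]\le C\|\eta\|_p^2$) gives
\[
\sup_{u\in[t,s]}\E\big[|X^{n+1}_u-X^n_u|^2\big]\ \le\ C\int_t^s \sup_{v\in[t,u]}\E\big[|X^n_v-X^{n-1}_v|^2\big]\,du,
\]
so $(X^n)$ is Cauchy in $L^\infty([t,T];L^2(\Omega))$. Its limit $X$ is $\F$-adapted and continuous, inherits \eqref{stap}, positivity and the martingale property by Fatou and Step 2, and passing to the limit in the frozen linear equations (the measure flows converge accordingly) shows that $X$ solves \eqref{provveq:1}. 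Uniqueness within the stated class follows from the same Gronwall estimate applied to the difference of two solutions, both of which satisfy \eqref{stap}, which again bounds the $\chi$-terms.

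\emph{Main obstacle.} The delicate point is Step 4: $h$ is not Lipschitz in the Wasserstein distance, only in the density-difference sense of Assumption (\nameref{HphLipschitz}) and with an unbounded $\chi$-correction, so closing the contraction requires both the $h$-independent a priori bound \eqref{stap} of Step 2 \emph{and} the balance of integrability exponents $q\le r(1-p^{-1})$ to keep $\chi(\|\mu^n_s\|_{\calw_q})$ under control; the freezing of $\P_{Y_s,\alpha_s}$ in Step 1 is what provides a common reference measure making this possible. The positivity and the conservation law $\E[\langle X_s,1_N\rangle]=1$ are not cosmetic: they are needed for $q_s\,\P_{Y_s,\alpha_s}$ to be an admissible argument of $h$, and are supplied by the Dol\'eans--Dade representation of Step 3. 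A secondary measurability issue --- that $s\mapsto\mu_s$ is measurable and that the coefficients in \eqref{provveq:1} are well defined although $q_s$ is determined only $\P_{Y_s,\alpha_s}$-a.e.\ --- is handled via the explicit form of the functional $\Gamma$ (see Section \ref{sec-separated}, paragraph \ref{subsec:spacesmeas}).
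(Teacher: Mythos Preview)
Your proof is correct and follows essentially the same route as the paper: solve the autonomous $Y$-equation first, obtain the $h$-independent a priori $L^p$-bound from boundedness of $h$, establish positivity of the frozen linear equation via the Dol\'eans--Dade/robust-Zakai change of variables (the paper phrases the resulting ODE invariance via Nagumo's theorem, you via the Metzler property---these are equivalent), and close a Picard/contraction argument using Assumption~(\nameref{HphLipschitz}) with reference measure $\P_{Y_s,\alpha_s}$ and the exponent condition $q\le r(1-p^{-1})$ to control the $\chi$-terms. The only cosmetic difference is that the paper iterates directly in the $\mathcal S^p$-norm and shows some $\Phi^n$ is a contraction on an explicit ball, whereas you run the Picard scheme in $L^\infty_tL^2_\omega$ and upgrade via the a priori bound; both lead to the same conclusion.
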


\begin{proof}
We firstly introduce the following notation: denote by \(\cals^p\) the Banach space of continuous, \(\F\)-adapted processes \(Z\) such that 
\[ ||Z||_{\cals^p}:= \Big( \E \Big[ \sup \limits_{s \in [t,T]} |Z_s|^p \Big] \Big)^{\frac{1}{p}} < +\infty, \]
identified up to indistinguishability. The space  \(\cals^r\) is defined analogously.

Since \(\sigma\) is globally Lipschitz and \(\xi \in L^r(\Omega, \calf, \P)\), by standard well-posedness results for SDEs there exists a unique solution \(Y \in \cals^r\) to the second equation in \eqref{provveq:1}. Moreover, \(Y\) satisfies estimate \eqref{stapY}.

Now denote by \(\cals_0^p\) the closed subset of \(\cals^p\) containing the processes \(Z\) such that \(\langle Z,1_N \rangle\) is a martingale with 
\begin{align}
    \label{martX}
    \P(Z_s\in\R^N_+)=1, \quad \E\,[ \langle Z_s,1_N\rangle]=1, \qquad s\in [t,T].
\end{align}
It is thus sufficient to prove that there exists a unique \(X \in \cals_0^p\) such that, $\P$-{a.s.},
\begin{equation} \label{Xdaptofisso}
    X_s= \eta + \int_{t}^s \Lambda^T X_r dr + \int_{t}^s diag(X_r) h(Y_r, \E[\langle X_r,1 \rangle|(Y_r,\alpha_r)=(y,a)] \P_{Y_r,\alpha_r}(dy,da), \alpha_r)\,dW_r,
\end{equation}
for  $ s\in[t,T]$. 
We first verify that the measure 
$\E\,\left[\langle X_s,1_N\rangle\,|\, (Y_s, \alpha_s)=(y,a)\right]\,\P_{Y_s, \alpha_s}(dy, da)$
belongs to $\calp_q(\R^d \times A)$ whenever $X\in\cals^p_0$, \(Y \in \cals^r\). Indeed, it is a probability measure by \eqref{martX}. Moreover, we have 
\begin{align}
 \Big\| \E\,\left[\langle X_s,1_N\rangle\,|\, (Y_s, \alpha_s)=\cdot\right]\,\P_{Y_s, \alpha_s} \Big\|_{\calw_q}^q\! &=\!
    \int_{\R^d} (|y| + d(a,a_0))^q \,\E\,\left[\langle X_s,1_N\rangle\,|\, (Y_s, \alpha_s)=(y,a)\right]\,\P_{Y_s, \alpha_s}(dy, da) \nonumber \\
    &=\nonumber
  \E\,\left[ (|Y_s| +d(\alpha_s,a_0))^q   \,\E\,\left[\langle X_s,1_N\rangle\,|\, Y_s, \alpha_s\right]\,\right]
   \\
    &=
  \E\,\left[ (|Y_s| +d(\alpha_s,a_0))^q   \, \langle X_s,1_N\rangle \right] \nonumber
  \\& \le 2^{q-1} 
  \E\,\left[ |Y_s|^q   \, \langle X_s,1_N\rangle \right]
  +2^{q-1}
  \E\,\left[ d(\alpha_s,a_0)^q   \, \langle X_s,1_N\rangle \right], \label{MomentofqsPs}
\end{align}
where $a_0\in A$ is any given point. The second summand is finite since the metric on \(A\) is bounded. For the first summand we note that, by the H\"older  inequality, 
\begin{align}
    \E\,\left[ |Y_s|^q   \, \langle X_s,1_N\rangle \right] &\leq \bigg( \E\,\left[|\langle X_s,1_N\rangle|^p \right] \bigg)^{\frac{1}{p}} \bigg( \E\,\left[ |Y_s|^\frac{qp}{p-1} \right] \bigg)^{\frac{p-1}{p}} \nonumber \\
    &\leq \bigg( \E\,\left[|X_s|^p\right] \bigg)^{\frac{1}{p}}  \bigg( \E\,\left[ |Y_s|^r\right]^\frac{qp}{r(p-1)} \bigg)^\frac{p-1}{p} \nonumber \\
    &\leq ||X||_{\cals^p} ||Y||_{\cals^r}^q<\infty, \label{FirstSummandMomentofqsPs}
\end{align}
where we used  the hypothesis \(q\le r\left(1-\frac{1}{p}\right)\) which implies that \(\frac{r(p-1)}{qp} \geq 1\).

Now let us define a map \(\Phi: \cals_0^p \rightarrow \cals^p\) such that \(H=\Phi(X)\) satisfies the following equation:
\begin{equation} \label{DefPhi}
    H_s= \eta + \int_t^s \Lambda^T H_r dr + \int_t^s diag(H_r) h(Y_r, \E [\langle  X_r,1 \rangle | (Y_r, \alpha_r)=(y,a) ] \P_{Y_r,\alpha_r}(dy,da), \alpha_r) dW_r.
\end{equation}
\(\Phi\) is well defined because it maps \(X\) into the unique solution \(H\) to a linear SDE (with \(h\) bounded) with initial condition \(\eta \in L^p\).  Noting that $\langle \Lambda^T H_r,1_N\rangle= \langle  H_r,\Lambda1_N\rangle= 0$, since $\Lambda1=0$, it can be proved that $\langle H,1_N\rangle$ is a martingale with expectation equal to $1$. We next verify the following  Lemma, whose proof is postponed:  

\begin{Lemma}\label{lemmainvariance}
    $\P(H_s\in\R^N_+)=1$.
\end{Lemma}

 By Lemma \ref{lemmainvariance} we conclude that  $H\in\cals^p_0$; as a consequence, $\Phi$  maps this space into itself.
Moreover, $X$ is a solution to \eqref{Xdaptofisso} if and only if it is a fixed point of $\Phi:\cals^p_0\to \cals^p_0$. We will use the following a priori estimate:

\begin{Lemma} \label{lemmastap}
There exists a constant $C_0$, depending only on $p$, $T$, $\Lambda$, $\sup |h|$, such that
\begin{align}
    \label{stapbis}
    \E\, \Big[\sup_{s\in [t,T] }|H_s|^p\Big]\le C_0\, \E\,[|\eta|^p].
\end{align} 
\end{Lemma}

\begin{proof}[ Proof of   Lemma \ref{lemmastap}]
Denoting by $C$ any constant that only depends on on $p$, $T$, $\Lambda$, $\sup |h|$ - not necessarily the same at any occurrence - we have
\begin{align*}
\sup_{u\in[t,s]} |H_u|^p &\le  C\,|\eta|^p+
C\, \left(\int_t^s |\Lambda^T H_r|  \,dr\right)^p 
\\&+ C\sup_{u\in[t,s]}\left| \int_t^u diag(H_r)\,h \Big(Y_r,\E\,\left[\langle X_r,1_N\rangle\,|\, (Y_r,\alpha_r)=(y,a)\right]\,\P_{Y_r,\alpha_r}(dy\,da),\alpha_r\Big)\,dW_r\right|^p,
\end{align*}
and by the Burkholder-Davis-Gundy and the H\"older inequalities, and the boundedness of $h$,
\begin{align*}
\E\sup_{u\in[t,s]} |H_u|^p &\le  C\,\E\,|\eta|^p+
C\,  \int_t^s \E\,|  H_r|^p  \,dr  
\\&+ C\,\E\left( \int_t^s \left| diag(H_r)\,h \Big(Y_r,\E\,\left[\langle X_r,1_N\rangle\,|\, (Y_r, \alpha_r)=(y,a)\right]\,\P_{Y_r, \alpha_r}(dy, da),\alpha_r\Big)\right|^2\,dr\right)^{p/2}
\\&\le C\,\E\,|\eta|^p+
C\,  \int_t^s \E\,|  H_r|^p  \,dr+ C\,\E\left( \int_t^s  |  H_r|^2\,\,dr\right)^{p/2}
\\&\le C\,\E\,|\eta|^p+
C\,  \int_t^s \E\,|  H_r|^p  \,dr.
\end{align*}
The conclusion follows from the Gronwall Lemma.
  
\end{proof}

We continue the proof of the theorem. It is enough to show that $\Phi$ has a unique fixed point. Indeed,  in this case the estimate \eqref{stap} follows from the previous lemma. Let us define a closed subset of $\cals_0^p$ setting
\[
\calb=\{Z\in\cals_0^p\;:\; 
\E\, \Big[\sup_{s\in [t,T] }|Z_s|^p\Big]\le C_0\, \E\,[|\eta|^p]\}.
\]
Lemma \ref{lemmastap} implies that $\Phi(\cals_0^p)\subset \calb$ and hence that any fixed point of $\Phi$ belongs to $\calb$. We will then show that the restricted map $\Phi:\calb\to\calb$ has a unique fixed point.  
To this end we will prove that a suitable iteration $\Phi^n$ of the map $\Phi$ is a contraction for the norm $\|\cdot\|_{\cals^p}$. Take $X,X'\in\calb$ and note that, by the previous lemma and the already proved inequality \eqref{stapY}, we have
\begin{align}
    \label{XXprimoball}
\|X\|_{\cals^p}+\|X'\|_{\cals^p} +\|Y\|_{\cals^r}\le K,
\end{align}
where, here and below, we denote by $K$ a constant depending only on the constants in Assumptions (\nameref{HpA1}) and (\nameref{HphLipschitz}) as well as  on $\Lambda$,  $\|\eta\|_p$, $\|\xi\|_r$ and  the function $\chi$. The  value of $K$ may change from line to line. 
Now define $H=\Phi(X)$, $H'=\Phi(X')$, $\bar X=X-X'$, $\bar H=H-H'$. Then $\bar H_0=0$ and subtracting equation \eqref{Xdaptofisso} for \(H\), \(H'\), we have
\begin{align*}
    &d\bar H_s = \Lambda^T \bar H_s  \,ds + diag(\bar H_s)\,h \Big(Y_s,\E\,\left[\langle X_s,1_N\rangle\,|\, (Y_s,\alpha_s)=(y,a)\right]\,\P_{Y_s,\alpha_s}(dy,da),\alpha_s\Big)\,dW_s 
    \\& +diag( H'_s)\,
    h \Big(Y_s,\E\,\left[\langle X_s,1_N\rangle\,|\, (Y_s,\alpha_s)=(y,a)\right]\,\P_{Y_s,\alpha_s}(dy,da),\alpha_s\Big) \,dW_s \\ &-diag( H'_s)
    h \Big(Y_s,\E\,\left[\langle X'_s,1_N\rangle\,|\, (Y_s,\alpha_s)=(y,a)\right]\,\P_{Y_s,\alpha_s}(dy,da),\alpha_s\Big)
    \,dW_s.
\end{align*}
Proceeding as in Lemma \ref{lemmastap}, using standard estimates, the
Burkholder-Davis-Gundy  
and the H\"older inequalities, and the boundedness of $h$, we obtain
\begin{align}
\E\sup_{u\in[t,s]} |\bar H_u|^p &\le  
K\,  \int_t^s \E\,| \bar H_r|^p  \,dr  
\nonumber
\\&\quad 
+ K\,\E\bigg( \int_t^s |H'_r|^2\,\bigg|h \Big(Y_r,\E\,\left[\langle X_r,1_N\rangle\,|\, (Y_r, \alpha_r)=(y,a)\right]\,\P_{Y_r, \alpha_r}(dy\,da),\alpha_r\Big) \nonumber
\\&\qquad\qquad\qquad
-h \Big(Y_r,\E\,\left[\langle X'_r,1_N\rangle\,|\, (Y_r, \alpha_r)=(y,a)\right]\,\P_{Y_r, \alpha_r}(dy,da),\alpha_r\Big)\bigg|^2\,dr\bigg)^{p/2}.
\label{stimaHbar}
\end{align}
Let us denote for short
\begin{align*}
\nu_1(dy\,da)&= \E\,\left[\langle X_r,1_N\rangle\,|\, (Y_r, \alpha_r)=(y,a)\right]\,\P_{Y_r, \alpha_r}(dy\,da),
\\    
\nu_2(dy\,da)&= \E\,\left[\langle X_r',1_N\rangle\,|\, (Y_r, \alpha_r)=(y,a)\right]\,\P_{Y_r, \alpha_r}(dy\,da),
\end{align*}
and let us  note that $\nu_1$ and $\nu_2$ are  absolutely continuous with respect to  \(\P_{Y_r, \alpha_r} \). 
By (\nameref{HphLipschitz}),
\begin{align}
    &\bigg|h \Big(Y_r,\nu_1,\alpha_r\Big) - h \Big(Y_r,\nu_2,\alpha_r\Big)\bigg|^2 \nonumber\\
    &\leq L^2 \bigg( \int_{\R^d \times A} \bigg|\E\,\left[\langle X_r,1_N\rangle\,|\, (Y_r, \alpha_r)=(y,a)\right]- \E\,\left[\langle X'_r,1_N\rangle\,|\, (Y_r, \alpha_r)=(y,a)\right] \bigg|^2 \P_{Y_r, \alpha_r}(dy\,da) \bigg)\nonumber \\
    &\cdot \bigg( 1 + \chi(\| \P_{Y_r, \alpha_r} \|_{\calw_r}) + \chi(\| \nu_1 \|_{\calw_q}) + \chi(\|\nu_2 \|_{\calw_q})\bigg)^2 .
    \label{usodiA2}
\end{align}
As we proved in \eqref{MomentofqsPs}-\eqref{FirstSummandMomentofqsPs},  
\[ \|\nu_1\|_{\calw_q}^q=\| \E\,\left[\langle X_r,1_N\rangle\,|\, (Y_r, \alpha_r)\right]\,\P_{Y_r, \alpha_r} \|_{\calw_q}^q \leq C ||X||_{\cals^p} (1 + ||Y||_{\cals^r}^q)\le K, \]
where the last inequality follows from \eqref{XXprimoball}. Similarly, we have $\|\nu_2\|_{\calw_q}\le K$. Moreover,
\begin{align*}
    \| \P_{Y_r, \alpha_r} \|_{\calw_r}^r &=  \int_{\R^d \times A} (|y| + d_A(\alpha_r,a_0))^r\, \P_{Y_r, \alpha_r}(dy,da) 
    \\
    &
    \leq C \int_{\R^d \times A} (|y|^r + 1) \, \P_{Y_r, \alpha_r}(dy,da) \leq C(1 + ||Y||_{\cals^r}^r)\le K.
\end{align*}
From \eqref{usodiA2}, recalling that \(\chi\) is an increasing function, 
it follows that
\begin{align*}
    \bigg|h \Big(Y_r,\nu_1,\alpha_r\Big) - h \Big(Y_r,\nu_2,\alpha_r\Big)\bigg|^2  
    &\leq K   \int_{\R^d \times A} \bigg|\E\,\left[\langle \bar X_r,1_N\rangle\,|\, (Y_r, \alpha_r)=(y,a)\right] \bigg|^2 \P_{Y_r, \alpha_r}(dy\,da)  \\
    &=K\, \E\,\left[| \E\,\left[\langle \bar X_r,1_N\rangle \,|\, (Y_r, \alpha_r)\right]|^2 \right] \\
    &\le K\,\E\,\left[|\langle \bar X_r,1_N\rangle |^2 \right] \leq K\,\E\,\left[|\bar X_r|^2 \right] .
\end{align*}
 Substituting in \eqref{stimaHbar}, we deduce that
\begin{align*}
    \E\sup_{u\in[t,s]} |\bar H_u|^p &\le  
K\,  \int_t^s \E\,| \bar H_r|^p  \,dr + K\,\E\bigg( \int_t^s |H'_r|^2 \E\,\left[|\bar X_r|^2 \right]\,dr\bigg)^{\frac{p}{2}} \\
&\leq K\,  \int_t^s \E\,| \bar H_r|^p  \,dr + K\,\E \int_t^s |H'_r|^p \bigg(\E\,\left[|\bar X_r|^2 \right]  \bigg)^{\frac{p}{2}} dr\\
&\leq K\,  \int_t^s \E\,| \bar H_r|^p  \,dr +K\, \int_t^s \E[ |H'_r|^p] \E\,\left[|\bar X_r|^p \right] dr,
\end{align*}
where we used  the Jensen inequality, since \(p \geq 2\). By Lemma \ref{lemmastap} we have $\E[ |H'_r|^p]\le K$ and we conclude that
\begin{align*}
    &\E\sup_{u\in[t,s]} |\bar H_u|^p \le K\,  \int_t^s \E\,| \bar H_r|^p  \,dr + K
    \int_t^s \E\,\left[|\bar X_r|^p \right] dr.
\end{align*} 
By the Gronwall Lemma we deduce that
\[  \E\sup_{u\in[t,s]} |\bar H_u|^p \le K \int_t^s \E\,\left[|\bar X_r|^p \right] dr. \]
Define, for $n\ge1$,
\[
\Delta^0_s=\E\,\Big[\sup_{u\in[t,s]} |X_u-X'_u|^p\Big], \qquad
\Delta^n_s=\E\,\Big[\sup_{u\in[t,s]} |\Phi^n(X)_u-\Phi^n(X')_u|^p\Big]. 
\]
We have proved that 
\(
\Delta^1_s\le K\int_t^s\Delta^0_r\,dr.\) Iterating this inequality we obtain
\[
\Delta^n_s\le \frac{s^{n+1}K^n}{n!}\Delta^0_s.
\]
Choosing $n$ large enough to guarantee that 
$T^{n+1}K^n<{n!}$, we conclude that $\Phi^n$ is a contraction and therefore $\Phi$ has a unique fixed point. 
\end{proof}

We conclude the reasoning presenting the proof of Lemma \ref{lemmainvariance}.
\begin{proof}[Proof of Lemma \ref{lemmainvariance}]
We write equation \eqref{DefPhi}, which defines  $H$, in scalar form as follows:
\begin{align}
dH^i_s=\sum_{j\in S}  \lambda_{ji} H^j_s  \,ds +   H^i_s\,h^i_s
\,dW_s, \quad s\in[t,T],
\end{align}
where we set
\[ h^i_s := h(Y_s, \E [\langle  X_s,1 \rangle | (Y_s, \alpha_s)=(y,a) ] \P_{Y_s,\alpha_s}(dy,da), \alpha_s, i). \]
Next we use  the so-called robust resolution of the Zakai equation: we define
\[
E^i_s=\exp\left(-\int_t^s h^i_r\,dW_r\right), \quad
N^i_s=H^i_s\,E^i_s, \qquad s\in[t,T].
\]
By the It\^o rule one finds
$dE^i_s=-E^i_s h^i_s\,dW_s
+\frac{1}{2} \, E^i_s|h^i_s|^2\,ds
$ and
\begin{align*}
    dN^i_s&= \sum_{j\in S}  \lambda_{ji} H^j_s  E^i_s\,ds +   H^i_sE^i_s\,h^i_s
\,dW_s
-H^i_sE^i_s h^i_s\,dW_s
+\frac{1}{2} \, H^i_sE^i_s|h^i_s|^2\,ds -H^i_sE^i_s |h^i_s|^2\,ds
\\& =  \sum_{j\in S}  \lambda_{ji} H^j_s  E^i_s\,ds -\frac{1}{2} \, H^i_sE^i_s|h^i_s|^2\,ds
\\& =  \sum_{j\in S}  \lambda_{ji} N^j_s  E^i_s(E^j_s)^{-1}\,ds -\frac{1}{2} \, N^i_s |h^i_s|^2\,ds.
\end{align*}
We conclude that $N$ satisfies an ordinary differential equation with stochastic coefficients of the form $(d/ds)N_s=A(s)N_s$ where the matrix $A(s)$ has entries
\[
A_{ij}(s)=
\begin{cases}
    \lambda_{ji}   E^i_s(E^j_s)^{-1}, &i\neq j,
    \\
 \lambda_{ii}   -\frac{1}{2} \,   |h^i_s|^2   ,&i=j.
\end{cases}
\]
In order to prove that 
    $\P(H_s\in\R^N_+)=1$, it is sufficient to verify that 
    $\P(N_s\in\R^N_+)=1$, since $E^i_s>0$.
Since $N_t=H_t=\eta\in\R^N_+$, we only need to show that the set $\R^N_+$ is invariant for the flow of $N$. To this purpose we apply 
the  Nagumo invariance theorem. The tangent cone $T(x)\in\R^N$ to the set $\R^N_+$ at each point $x\in\R^N_+$ is easily computed as follows:
\[\text{setting}\; J(x)=\{j\in\{1,\ldots,N\}\,:\, x_j=0\},
\; \text{we have}\;
T(x)=\{v\in\R^N\,:\, v_i\ge 0\text{ for every } i\in J(x)\}.
\]
To
apply 
 Nagumo's theorem we  have to check that $A(s)x\in T(x)$ for every $x\in\R^N_+$. We have
 \[
(A(s)x)_i= 
 \sum_{j\in S} A_{ij}(s)x_j=
 \sum_{j\notin J(x)} A_{ij}(s)x_j.
 \]
If $i\in J(x)$ and $j\notin J(x)$ then clearly $i\neq j$ and since $A_{ij}(s)\ge 0$ for $i\neq j$ and $x_j\ge 0$ for every $j$ we see that
$(A(s)x)_i\ge 0$ whenever $i\in J(x)$, as required. It follows that the flow of $N$ leaves $\R^N_+$ invariant and the thesis holds.

A statement of Nagumo's theorem can be found in Theorem 1.2 of \cite{Aubin}. One can also consult Theorem 2.3 in Chapter 0 of \cite{ClarkeLedyaevSternWolenski}. We need the extension to nonautonomous systems (i.e. with time-dependent coefficients) which can be treated as indicated in Section 3 of Chapter 4 in the same book. Finally, the continuity requirement with respect to $t$ can easily be achieved by mollification of the coefficients $t\mapsto h_t^i$. 
\end{proof}

\begin{Remark}\label{remwellpose}
    \begin{enumerate}
\item[(i)]  
The requirement that $X_s\in\R^N_+$ is justified since we continue to identify $\R^N_+$ with the set of nonnegative measures on  $ S$. 
 \item[(ii)] 
 We note   an invariance property for the flow $s\mapsto 
\P_{X_s,Y_s}$ of the joint laws of $X_s$ and $Y_s$. 
For $\mu\in\calp(\R^N\times \R^d)$, denote by  $\mu_1\in\calp(\R^N)$ and $\mu_2\in\calp(\R^d)$ the  first and second marginal of $\mu$. Next define 
\begin{align}\label{defdpr}
&    
\cald_{p,r}=\bigg\{ \mu\in\calp(\R^N\times \R^d)\,:\, \mu_1\in\calp_p(\R^N),\, \mu_2\in\calp_r(\R^d)
\\&\qquad\qquad \mu_1(\R^N_+)=1,\, \langle \bar \mu_1,1_N\rangle= \int_{\R^N}\langle x,1_N\rangle\,\mu_1(dx)=1, \bigg\},
\end{align}
where  $\bar \mu_1\in\R^N$ denotes the mean of $\mu_1$. By our assumptions, $\P_{\eta,\xi}$ belongs to $\cald_{p,r}$ and the theorem implies that
$\P_{X_s,Y_s}\in\cald_{p,r}$. Thus, $\cald_{p,r}$ is left invariant by the flow $s\mapsto 
\P_{X_s,Y_s}$, for any control process $\alpha\in\cala$. \qed 
    \end{enumerate}
\end{Remark}

For the solution processes $Y$ and $X$ constructed in the previous result we use the notation $Y_s^{t,\xi}$ and $X_s^{t,\eta,\xi,\alpha}$, noting that $Y$ does not depend either on $\eta$ or on $\alpha$. 
We also have the following consequence, whose   proof follows  from the uniqueness of the solution and it  is postponed to the appendix. 

\begin{Corollary} \label{flowcor}\emph{(Flow property).} 
Under the assumptions of Theorem \ref{existstateeqprovv}, for $0\le t\le s\le r\le T$ and for   $\alpha\in\cala$, we have, $\P$-a.s.,
\begin{align}\label{flowprop}
\left(Y_r^{t,\xi}, X_r^{t,\eta,\xi,\alpha}
    \right)=
\left(Y_r^{s,Y_s^{t,\xi}  }, X_r^{s,X_s^{t,\eta,\xi,\alpha},Y_s^{t,\xi},\alpha}
    \right).
\end{align}
\end{Corollary}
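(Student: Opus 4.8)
\textbf{Proof proposal for Corollary \ref{flowcor}.}
The plan is to argue by pathwise uniqueness, exploiting the fact that the mean-field terms entering the coefficients of \eqref{provveq:1} are insensitive to the starting time: they depend on the solution only through the laws of $(Y_r,\alpha_r)$ and of $\langle X_r,1_N\rangle$, and through the conditional expectation of the latter given the former, all of which are attached to fixed random variables rather than to a particular initialization.

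First I would dispose of the $Y$-component. Since $Y$ solves the closed equation $dY_r=\sigma(Y_r)\,dW_r$, with neither mean-field nor control dependence, and $\sigma$ is globally Lipschitz, the restriction of $Y^{t,\xi}$ to $[s,T]$ is a solution of this SDE started at time $s$ from $Y^{t,\xi}_s$, which is $\calf_s$-measurable and lies in $L^r$ by \eqref{stapY}. Pathwise uniqueness then gives $Y^{t,\xi}_r=Y^{s,Y^{t,\xi}_s}_r$ for all $r\in[s,T]$, $\P$-a.s.

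Next, write $\bar X_r=X^{t,\eta,\xi,\alpha}_r$ and $\bar Y_r=Y^{t,\xi}_r$ for $r\in[s,T]$, and let $\alpha$ be restricted to $[s,T]$ (still $\F$-predictable). I would verify that $(\bar X,\bar Y)$ is a solution, in the sense of Theorem \ref{existstateeqprovv}, of the state equation \eqref{provveq:1} on $[s,T]$ with starting time $s$ and initial datum $(\bar X_s,\bar Y_s)=\big(X^{t,\eta,\xi,\alpha}_s,Y^{t,\xi}_s\big)$. The $Y$-equation and the drift $\Lambda^T\bar X_r$ are immediate. For the diffusion coefficient, the crucial observation is that in the original equation on $[t,T]$ it is evaluated at $\big(\bar Y_r,\ \E[\langle\bar X_r,1_N\rangle\,|\,(\bar Y_r,\alpha_r)=\cdot]\,\P_{\bar Y_r,\alpha_r},\ \alpha_r\big)$, which is exactly the expression required of a solution of the restarted equation, since $\bar Y_r$, $\bar X_r$ and $\alpha_r$ are literally the same random variables regardless of the initial time; in particular the self-referential conditional expectation uses the candidate solution $\bar X$ itself, as it should. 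Hence the restricted pair satisfies the integral form \eqref{Xdaptofisso} with $t$ replaced by $s$. The hypotheses of Theorem \ref{existstateeqprovv} at the restart time hold: $(\bar X_s,\bar Y_s)$ is $\calf_s$-measurable because $X^{t,\eta,\xi,\alpha}$ and $Y^{t,\xi}$ are $\F$-adapted; $\|\bar Y_s\|_r<\infty$ and $\|\bar X_s\|_p<\infty$ by \eqref{stap} and \eqref{stapY}; $\P(\bar X_s\in\R^N_+)=1$ and $\E[\langle\bar X_s,1_N\rangle]=1$ by the conclusions of Theorem \ref{existstateeqprovv}; the constraint $q\le r(1-1/p)$ is assumed; and $\langle\bar X,1_N\rangle$ is a martingale on $[s,T]$, being the restriction of a martingale. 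Invoking the uniqueness part of Theorem \ref{existstateeqprovv} on $[s,T]$ then yields $\bar X_r=X^{s,\bar X_s,\bar Y_s,\alpha}_r=X^{s,X^{t,\eta,\xi,\alpha}_s,Y^{t,\xi}_s,\alpha}_r$ for all $r\in[s,T]$, $\P$-a.s., which together with the first step gives \eqref{flowprop}.

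The only genuinely delicate point is the self-referential structure of the mean-field coefficient: one must check that "being a solution" of the restarted equation really does involve $\E[\langle\bar X_r,1_N\rangle\,|\,\cdot]$ with $\bar X$ the candidate itself, and not some a priori different conditional law; but this is precisely the definition of a solution, so no obstacle arises. A secondary point is that these conditional expectations are defined only $\P_{\bar Y_r,\alpha_r}$-a.e., yet since they enter only as densities against $\P_{\bar Y_r,\alpha_r}$ the resulting measures are well defined, as already noted after \eqref{funz:1}. Everything else is a routine verification using the a priori estimates \eqref{stap}--\eqref{stapY}.
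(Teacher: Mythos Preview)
Your proof is correct and follows essentially the same route as the paper's: both handle $Y$ via the classical flow property, then verify that the restriction of $X^{t,\eta,\xi,\alpha}$ to $[s,T]$ solves the restarted system \eqref{provveq:1} with initial datum $(X^{t,\eta,\xi,\alpha}_s,Y^{t,\xi}_s)$, and conclude by the uniqueness part of Theorem \ref{existstateeqprovv}. Your explicit remark on the self-referential nature of the mean-field coefficient and your verification that the restart data satisfy the hypotheses of Theorem \ref{existstateeqprovv} are more detailed than the paper's version, but the underlying argument is the same.
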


\begin{Corollary} \label{continizi}
Under the assumptions of Theorem \ref{existstateeqprovv}, if $\eta$ and $\xi$ are $\calf_0$-measurable then we have, as $s-t\downarrow 0$, 
\begin{align*}
\sup_{\alpha\in\cala}\|X_s^{t,\eta,\xi,\alpha}-\eta\|_p\to 0, \qquad
\|Y_s^{t,\xi}-\xi\|_r\to 0.
\end{align*}
\end{Corollary}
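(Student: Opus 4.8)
The plan is to estimate the two increments directly from the integral form of the system \eqref{provveq:1} on the short interval $[t,s]$, relying on the fact that the a priori bounds \eqref{stap}--\eqref{stapY} of Theorem \ref{existstateeqprovv} hold with a constant $C_0$ that does not depend on the control $\alpha$ nor on the initial time $t$. The hypothesis that $\eta$ and $\xi$ are $\calf_0$-measurable (rather than merely $\calf_t$-measurable) will be used only to guarantee that, for these fixed random initial data, the processes $X^{t,\eta,\xi,\alpha}$ and $Y^{t,\xi}$ are well defined for every $t\in[0,T]$, so that Theorem \ref{existstateeqprovv} may be applied with constants uniform in $t$.

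For the $Y$-part I would start from $Y_s^{t,\xi}-\xi=\int_t^s\sigma(Y_r^{t,\xi})\,dW_r$ and, since $r\ge 2$, apply the Burkholder--Davis--Gundy and H\"older inequalities to get $\E[|Y_s^{t,\xi}-\xi|^r]\le C(s-t)^{r/2-1}\int_t^s\E[|\sigma(Y_r^{t,\xi})|^r]\,dr$. Using the linear growth of $\sigma$ (a consequence of its global Lipschitz property) together with \eqref{stapY}, this is bounded by $C(s-t)^{r/2}(1+\|\xi\|_r^r)$, which tends to $0$ as $s-t\downarrow 0$.

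For the $X$-part the argument runs along the same lines, and crucially no control of the distribution-dependent argument of $h$ is needed here, since $h$ is bounded. From \eqref{provveq:1} we have $X_s^{t,\eta,\xi,\alpha}-\eta=\int_t^s\Lambda^T X_r\,dr+\int_t^s\mathrm{diag}(X_r)\,h(Y_r,\ldots,\alpha_r)\,dW_r$; using $|\Lambda^T x|\le C|x|$ and $|\mathrm{diag}(x)\,h(\cdot)|\le C|x|$, together with the Burkholder--Davis--Gundy and H\"older inequalities as above, one obtains $\E[|X_s^{t,\eta,\xi,\alpha}-\eta|^p]\le C\big((s-t)^p+(s-t)^{p/2}\big)\,\E[\sup_{u\in[t,T]}|X_u|^p]$. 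Bounding the last expectation by $C_0\|\eta\|_p^p$ via \eqref{stap} --- a bound \emph{uniform in} $\alpha$ --- and using $p\ge 2$ and $\|\eta\|_p<\infty$, one takes the supremum over $\alpha\in\cala$ and lets $s-t\downarrow 0$ to conclude.

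I do not anticipate a genuine obstacle: this is a standard short-time estimate. The only delicate point is to keep every constant independent of $\alpha$ and of $t$, which is exactly what Theorem \ref{existstateeqprovv} provides, and to invoke the $\calf_0$-measurability of $(\eta,\xi)$ so that one genuinely deals with a single family of solutions indexed by the starting time $t$.
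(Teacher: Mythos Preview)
Your proposal is correct and follows essentially the same route as the paper: both arguments write the increment $X_s^{t,\eta,\xi,\alpha}-\eta$ (resp.\ $Y_s^{t,\xi}-\xi$) in integral form, invoke the Burkholder--Davis--Gundy and H\"older inequalities together with the boundedness of $h$ (resp.\ the linear growth of $\sigma$), and then close the estimate using the $\alpha$-uniform a priori bounds \eqref{stap}--\eqref{stapY}. Your version is slightly more explicit in that it extracts the rate $(s-t)^{p/2}$ (resp.\ $(s-t)^{r/2}$), whereas the paper simply bounds $\|X_s^{t,\eta,\xi,\alpha}-\eta\|_p^p$ by $C\int_t^s\|X_r^{t,\eta,\xi,\alpha}\|_p^p\,dr$ and observes this tends to $0$ uniformly in $\alpha$ via \eqref{stap}; but the substance is identical.
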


\begin{proof}
 From the state equation \eqref{provveq:1}, using the Burkholder-Davis-Gundy inequalities and the boundedness of $h$ we obtain
\begin{align*} \|X_s^{t,\eta,\xi,\alpha}-\eta\|_p^p&\le  
C\,  \E\int_t^s |   X^{t,\eta,\xi,\alpha}_r|^p  \,dr    
+ C\,\E\,\bigg[\bigg( \int_t^s |X^{t,\eta,\xi,\alpha}_r|^2 \,dr\bigg)^{p/2}\bigg]
\le C\, \E\int_t^s \|   X^{t,\eta,\xi,\alpha}_r\|_p^p  \,dr   
\end{align*}
 and this tends to $0$ uniformly for $\alpha\in\cala$, by the estimate    \eqref{stap}.
Similarly, using \eqref{stapY}, one proves that $\|Y_s^{t,\xi}-\xi\|_r\to 0$.
\end{proof}

After proving the well-posedness of the state equation we consider the corresponding reward functional.

\begin{Proposition}\label{Jwelldef}
Suppose that Assumptions (\nameref{HpA1}), (\nameref{HphLipschitz}), (\nameref{HpU}) hold true and assume that $p\ge 2$ and $r\ge 2$ satisfy
\[
q\le r\left(1-\frac{1}{p}\right),
\qquad
\ell\le r\left(1-\frac{1}{p}\right),
\]
where $\ell$ is the constant in \eqref{growthfg}.
Let $(\eta,\xi)$ satisfy the assumptions of 
 Theorem \ref{existstateeqprovv}. 
Then the reward functional $J(t,\eta,\xi,\alpha)$ is well defined and finite.
\end{Proposition}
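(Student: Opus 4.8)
The plan is to reduce the finiteness of $J(t,\eta,\xi,\alpha)$ entirely to the a priori estimates already established in Theorem \ref{existstateeqprovv}, combined with the growth bound \eqref{growthfg}. Write $X = X^{t,\eta,\xi,\alpha}$, $Y = Y^{t,\xi}$, and denote by $\nu_s := \E[\langle X_s,1_N\rangle\,|\,(Y_s,\alpha_s)=\cdot]\,\P_{Y_s,\alpha_s}$ and $\bar\nu_T := \E[\langle X_T,1_N\rangle\,|\,Y_T=\cdot]\,\P_{Y_T}$ the deterministic measure arguments appearing in \eqref{provvfunz:1}; by Theorem \ref{existstateeqprovv} these are probability measures, and the moment computation \eqref{MomentofqsPs}--\eqref{FirstSummandMomentofqsPs} gives $\|\nu_s\|_{\calw_q}^q \le C\,\|X\|_{\cals^p}(1+\|Y\|_{\cals^r}^q)$, and analogously for $\bar\nu_T$. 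Hence, by \eqref{stap}--\eqref{stapY} and since $\chi$ is increasing, $\chi(\|\nu_s\|_{\calw_q}) + \chi(\|\bar\nu_T\|_{\calw_q}) \le K$ for a constant $K$ depending only on $\|\eta\|_p$, $\|\xi\|_r$, $\chi$ and the structural constants, uniformly in $s\in[t,T]$.

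Next I would use the Cauchy--Schwarz inequality in $\R^N$ together with \eqref{growthfg} to bound the integrand and the terminal term pointwise:
\[
|\langle X_s, f(Y_s,\nu_s,\alpha_s)\rangle| \le C\,|X_s|\,\big(1 + |Y_s|^\ell + \chi(\|\nu_s\|_{\calw_q})\big) \le C\,|X_s|\,(1 + K + |Y_s|^\ell),
\]
and analogously for $|\langle X_T, g(Y_T,\bar\nu_T)\rangle|$. It then suffices to check that $\E\int_t^T |X_s|\,ds$ and $\E\int_t^T |X_s|\,|Y_s|^\ell\,ds$, together with the corresponding quantities at $s=T$, are finite. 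The first is at most $T\,\|X\|_{\cals^p}<\infty$. For the second, Hölder's inequality with exponents $p$ and $p/(p-1)$ gives $\E[|X_s|\,|Y_s|^\ell] \le \|X_s\|_p\,\big(\E[|Y_s|^{\ell p/(p-1)}]\big)^{(p-1)/p}$, and the standing hypothesis $\ell \le r(1-p^{-1})$ is precisely what guarantees $\ell p/(p-1) \le r$, so this is bounded by a constant times $\|X\|_{\cals^p}(1 + \|Y\|_{\cals^r}^r)$, which is finite by Theorem \ref{existstateeqprovv}. Integrating over $[t,T]$ and adding the terminal contribution (handled by the same two estimates with $s=T$) yields that the integral and the terminal expectation defining $J(t,\eta,\xi,\alpha)$ are absolutely convergent, so $J(t,\eta,\xi,\alpha)$ is well defined and finite.

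There is no genuine obstacle here: all the substantive work --- existence and uniqueness of $(X,Y)$, the moment bounds \eqref{stap}--\eqref{stapY}, and the membership $\nu_s,\bar\nu_T \in \calp_q$ with controlled $\calw_q$-norm --- has already been carried out in Theorem \ref{existstateeqprovv}, and what remains is the routine combination of Cauchy--Schwarz in $\R^N$, the growth estimate \eqref{growthfg}, and a single application of Hölder's inequality whose exponent bookkeeping is dictated exactly by the hypothesis $q,\ell \le r(1-p^{-1})$. The only point deserving a line of care is that $\nu_s$ and $\bar\nu_T$ are deterministic measures, obtained from fixed disintegrations of the joint laws, so that $\chi(\|\nu_s\|_{\calw_q})$ and $\chi(\|\bar\nu_T\|_{\calw_q})$ are honest constants and may be pulled out of the expectation.
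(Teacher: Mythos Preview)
Your proof is correct and follows essentially the same approach as the paper: bound $\|\nu_s\|_{\calw_q}$ using the moment computation from Theorem~\ref{existstateeqprovv}, apply the growth condition \eqref{growthfg}, and control the cross term $\E[|X_s|\,|Y_s|^\ell]$ via H\"older with exponents $p,\,p'$, where the hypothesis $\ell\le r(1-p^{-1})$ is exactly what makes $\ell p'\le r$. The only cosmetic difference is that the paper applies H\"older to the full time integral in one step and records the explicit bound $C\|\eta\|_p(1+\|\xi\|_r^\ell)(T-t)$, whereas you bound the integrand pointwise and then integrate; both are equivalent, and your remark that $\nu_s$, $\bar\nu_T$ are deterministic (so $\chi(\|\nu_s\|_{\calw_q})$ may be pulled out of the expectation) is a useful clarification.
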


\begin{proof}
 Denote \(\mu_s= \E\,\left[\langle X_s,1_N\rangle\,|\, (Y_s, \alpha_s)\right]\,\P_{Y_s, \alpha_s}\) for simplicity.
Then
\begin{align*} 
\| \mu_s \|_{\calw_q}
\le C \|X\|_{\cals^p}^{1/q} (1 +  \|Y\|_{\cals^r})
 \le C\, \|\eta\|_p^{1/q}(1+\|\xi\|_r),
\end{align*}
where the first inequality follows from  \eqref{MomentofqsPs}-\eqref{FirstSummandMomentofqsPs} and the second one from \eqref{stap}-\eqref{stapY}.
 Denoting by $p'$ the exponent conjugate to $p$ and using the growth condition \eqref{growthfg} we have
\begin{align*}
\bigg|\E\bigg[\int_t^T \langle X_s ,f\Big(Y_s, \mu_s,\alpha_s\Big)\rangle\,ds \bigg]\bigg|
  &  \le
\left(\E \int_t^T | X_s|^p\,ds\right)^{1/p}\left( \E\int_t^T\Big|f\Big(Y_s, \mu_s,\alpha_s\Big)\Big|^{p'}\,ds \right)^{1/p'},
\\
 & \le C \,
 \left( \int_t^T \| X_s \|_p^p\,ds\right)^{1/p}\!\left( \int_t^T 
 (1+ \E\,[|Y_s|^{\ell p'}]+\chi(\|\mu_s\|_{\calw_q})^{p'})\,ds\right)^{1/p'}
 \\&
    \le C \left( \int_t^T \| X_s \|_p^p\,ds\right)^{1/p}
    \!\left( \int_t^T(1+\|Y_s\|_{\ell p'}^{\ell p'})\,ds \right)^{1/p'} .
\end{align*}
Due to the condition $\ell\le r(1-p^{-1})$ we have  $\ell p'\le r$, and using again 
\eqref{stap}-\eqref{stapY} and the H\''older inequality  we conclude that
\begin{align}\nonumber
\bigg|\E\bigg[\int_t^T \langle X_s ,f\Big(Y_s, \mu_s,\alpha_s\Big)\rangle\,ds \bigg|&  
\le C \left( \int_t^T \| X_s \|_p^p\,ds\right)^{1/p}
    \left( \int_t^T(1+\|Y_s\|_{r}^{\ell p'})\,ds \right)^{1/p'} 
\\&\nonumber
    \le C \,\|\eta\|_p (T-t)^{1/p}(1+\|\xi\|_r^\ell)(T-t)^{1/p'}
    \\&\label{estimJint}
    =C \,\|\eta\|_p  (1+\|\xi\|_r^\ell)(T-t)
    .
\end{align}
The term containing $g$ is treated in a similar way and we conclude that the reward functional $J(t,\eta,\xi,\alpha)$ is well defined and finite. 
\end{proof}

\vspace{1mm}

Under the assumptions of Proposition \ref{Jwelldef},
  the separated optimal control problem 
\eqref{provveq:1}-\eqref{provvfunz:1} 
is well posed  and we can define the lifted value function
\[
V(t,\eta,\xi)=\sup_{\alpha\in\cala} J(t,\eta,\xi,\alpha),
\]
for any $t\in[0,T]$, $\xi\in L^r(\Omega,\calf_t,\P;\R^d)$ and $\eta\in L^p(\Omega,\calf_t,\P;\R^N)$ satisfying  $\P(\eta\in\R^N_+)=1$, $
\E\,[\langle \eta,1_N\rangle]=1$. 
For the lifted value function
we have   the following dynamic programming principle, whose proof is standard and will be omitted.

\begin{Proposition}\label{propDPP}
Under the assumptions of Proposition \ref{Jwelldef}, for $0\le t\le s\le T$ we have
\begin{align}\nonumber
V(t,\eta,\xi)&=\sup_{\alpha\in\cala}
\bigg\{
\E\left[\int_t^s \big\langle X_r ,f\big(Y_r, \E\,\left[\langle X_r,1_N\rangle\,|\, (Y_r,\alpha_r)=(y,a)\right]\,\P_{Y_r,\alpha_r}(dy\,da),\alpha_r\big) \big\rangle\,dr\right] 
\\&\quad \quad\quad\;\;\;+  V(s, X_s,Y_s)  \bigg\},
\label{DPP}
\end{align}
where $Y_r=Y_r^{t,\xi}$ and $X_r=X_r^{t,\eta,\xi,\alpha}$ for $r\in [t,T]$. 
\end{Proposition}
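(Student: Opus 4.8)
The plan is to prove separately the two inequalities in \eqref{DPP} — call them the upper and the lower bound for $V(t,\eta,\xi)$ — reducing everything to the flow property (Corollary~\ref{flowcor}) and to the pathwise uniqueness of Theorem~\ref{existstateeqprovv}. Throughout, write $Y_r=Y_r^{t,\xi}$, $X_r=X_r^{t,\eta,\xi,\alpha}$ and $\mu_r=\E\,[\langle X_r,1_N\rangle\,|\,(Y_r,\alpha_r)=\cdot\,]\,\P_{Y_r,\alpha_r}$, and split the reward functional $J(t,\eta,\xi,\alpha)$ into its contribution over $[t,s]$ — which is exactly the integral term in \eqref{DPP} — and its contribution over $[s,T]$. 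The key observation, used in both directions, is that for $r\in[s,T]$ the flow property gives $(Y_r,X_r)=(Y_r^{s,Y_s},\,X_r^{s,X_s,Y_s,\alpha})$, and since the controlled system on $[s,T]$ refers only to the restriction of the control to that interval and has a unique solution, the quantities $\P_{Y_r,\alpha_r}$ and $\E\,[\langle X_r,1_N\rangle\,|\,(Y_r,\alpha_r)]$ entering the coefficients coincide with the corresponding objects of the problem freshly posed at time $s$ with initial datum $(X_s,Y_s)$; hence the $[s,T]$-contribution to $J(t,\eta,\xi,\alpha)$ equals $J(s,X_s,Y_s,\alpha)$. Moreover $(X_s,Y_s)$ is $\calf_s$-measurable and, by Theorem~\ref{existstateeqprovv} and Remark~\ref{remwellpose}, meets the integrability and positivity requirements that make $V(s,X_s,Y_s)=\sup_{\beta\in\cala}J(s,X_s,Y_s,\beta)$ well defined and finite, with finiteness uniform in $\beta$ by virtue of \eqref{estimJint}.

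For the upper bound I fix $\alpha\in\cala$, combine the decomposition above with $J(s,X_s,Y_s,\alpha)\le V(s,X_s,Y_s)$, and conclude that $J(t,\eta,\xi,\alpha)$ is at most the $[t,s]$-integral term plus $V(s,X_s,Y_s)$; taking the supremum over $\alpha$ yields the upper bound.

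For the lower bound I fix $\alpha\in\cala$ and $\eps>0$ and choose $\beta^\eps\in\cala$ with $J(s,X_s,Y_s,\beta^\eps)\ge V(s,X_s,Y_s)-\eps$. I then set $\gamma:=\alpha$ on $[t,s]$ and $\gamma:=\beta^\eps$ on $(s,T]$; this $\gamma$ is $\F$-predictable, hence $\gamma\in\cala$. Because the state equation is driven forward in time, uniqueness applied on $[t,s]$ yields $(Y,X^\gamma)=(Y,X^\alpha)$ there, so the $[t,s]$-contribution to $J(t,\eta,\xi,\gamma)$ is the same integral as for $\alpha$ and $X_s^{t,\eta,\xi,\gamma}=X_s$; on $[s,T]$, the flow property together with uniqueness and the fact that $\gamma=\beta^\eps$ on that interval identify the $[s,T]$-contribution with $J(s,X_s,Y_s,\beta^\eps)$. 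Therefore $V(t,\eta,\xi)\ge J(t,\eta,\xi,\gamma)$ is bounded below by the $[t,s]$-integral term plus $V(s,X_s,Y_s)-\eps$; letting $\eps\downarrow 0$ and taking the supremum over $\alpha$ yields the lower bound.

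I expect the main obstacle to be the rigorous identification of the ``continuation'' of $J(t,\eta,\xi,\gamma)$ over $[s,T]$ with the freshly posed functional $J(s,X_s,Y_s,\cdot)$: since the coefficients depend on the marginal law $\P_{Y_r,\alpha_r}$ and on the conditional expectation of $\langle X_r,1_N\rangle$ given $(Y_r,\alpha_r)$, one must verify that the measure-valued and the function-valued coefficients of the restarted equation agree with those of the original one. This is delivered precisely by pathwise uniqueness (Theorem~\ref{existstateeqprovv}) and the flow property (Corollary~\ref{flowcor}), which together force the concatenated and the restarted state processes to coincide, hence also their laws and conditional laws. Finally, I emphasize that it is the use of the \emph{lifted} value function — where the initial condition is allowed to be a random variable — that keeps the lower-bound argument elementary: for a fixed $\alpha$ the pair $(X_s,Y_s)$ is a single random variable, so a single $\eps$-optimal control for it suffices and no measurable-selection argument over initial data is required.
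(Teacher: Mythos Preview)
The paper does not supply its own proof of Proposition~\ref{propDPP}; it simply states that ``the proof is standard and will be omitted.'' Your argument is precisely the standard one the authors have in mind: split $J(t,\eta,\xi,\alpha)$ at the intermediate time $s$, invoke the flow property (Corollary~\ref{flowcor}) and pathwise uniqueness (Theorem~\ref{existstateeqprovv}) to identify the $[s,T]$-piece with $J(s,X_s,Y_s,\alpha)$, and then run the usual two-inequality scheme with a concatenated control for the lower bound. Your closing remark---that working with the \emph{lifted} value function makes $V(s,X_s,Y_s)$ a single real number so a single $\eps$-optimal $\beta^\eps$ suffices and no measurable selection is needed---is exactly the reason the argument is standard in this McKean--Vlasov setting.
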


We note that in the definition of the lifted value function $V$ we are taking the supremum over $\alpha\in\cala$, whereas in the separated problem of the previous section we had $\alpha\in\cala^W$. In the latter case the control is only adapted to the observation process $W$, whereas $\alpha\in\cala$ also depends on the auxiliary uniform random variable $U$. However,
we have the following result, whose proof is postponed to the Appendix:
\begin{Proposition}\label{AeAW} Suppose that  Assumptions  (\nameref{HpA1}), (\nameref{HphLipschitz}), (\nameref{HpU})  hold true and assume that $p\ge 2$ and $r\ge 2$ satisfy
\[
q\le r\left(1-\frac{1}{p}\right),
\qquad
\ell\le r\left(1-\frac{1}{p}\right).
\]
Suppose that $\eta=\pi_0$ and $\xi=y_0$ are deterministic.  Finally assume that
  \begin{align}
\label{rightcontatzero}
\sup_{  \alpha\in\cala}|J(t,\eta,\xi,\alpha)-J(0,\eta,\xi,\alpha)|\to 0, \qquad \text{ as }t\to 0.
  \end{align}
In particular, this holds when Assumptions  (\nameref{HpUniformContinuityh}), (\nameref{HpUniformContinuityfg}) below      are verified as well. 
Then we have
\[\sup_{\alpha\in\cala} J(0,\eta,\xi,\alpha)=\sup_{\alpha\in\cala^W} J(0,\eta,\xi,\alpha).
    \]
\end{Proposition}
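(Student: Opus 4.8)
The plan is to show the two suprema coincide by proving the inequality "$\le$", since the reverse inequality "$\ge$" is immediate from $\cala^W \subseteq \cala$. The heart of the argument is the following: given an arbitrary $\alpha \in \cala$, I want to produce a control $\alpha' \in \cala^W$ with $J(0,\pi_0,y_0,\alpha')$ arbitrarily close to $J(0,\pi_0,y_0,\alpha)$. The natural candidate is the "decoupling" idea: an $\F$-predictable control is, loosely speaking, a $\sigma(U)$-indexed family of $\F^W$-predictable controls, so one averages or selects over $U$. Concretely, since $U$ is uniform on $(0,1)$ and independent of $W$ (hence of $Y$), for $\P_U$-a.e.\ $u$ the frozen control $\alpha^u(\omega,s) := \alpha(\omega,s)$ with the $U$-coordinate set to $u$ is $\F^W$-predictable, and by Fubini $J(0,\pi_0,y_0,\alpha)$ should decompose as an integral $\int_0^1 J(0,\pi_0,y_0,\alpha^u)\,du$ — but this decomposition is exactly where the distribution-dependence bites, because the laws $\P_{Y_s,\alpha_s}$ and $\P_{X_s,Y_s}$ appearing inside the coefficients are computed under the full measure and are not the same as the laws under the frozen dynamics driven by $\alpha^u$.

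To get around this, I would first use the right-continuity hypothesis \eqref{rightcontatzero} together with the dynamic programming / flow structure: partition $[0,T]$ into small subintervals and approximate $\alpha$ by a control that is "piecewise constant in the $U$-direction", i.e.\ on each small time-step the $U$-dependence enters only through a finitely-valued $\sigma(U)$-measurable random variable. More precisely, I would approximate $\alpha$ by controls of the form $\alpha = \sum_k 1_{\{U \in I_k\}} \beta^k$ with $\beta^k \in \cala^W$ and $\{I_k\}$ a finite partition of $(0,1)$; the point of \eqref{rightcontatzero} and Corollary \ref{continizi} is to control the error made when the initial data $(\pi_0, y_0)$ are deterministic, so that over a short horizon the value does not move much and the law-dependent coefficients are nearly frozen. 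One then shows that for such a "finitely-switched" control the reward $J(0,\pi_0,y_0,\alpha)$ is a convex combination (with weights $|I_k|$) of rewards of $\F^W$-controls evaluated against the *same* law flow, and concavity/linearity is not needed — one just picks the best index $k$, or more carefully argues that replacing the law-flow of the mixed control by the law-flow of a single $\beta^k$ changes $J$ by a controlled amount via the Lipschitz-type estimate (\nameref{HphLipschitz}) and the growth bound \eqref{growthfg}, exactly as in the stability estimates already established in the proof of Theorem \ref{existstateeqprovv}.

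The main obstacle I anticipate is precisely this mismatch between "the law used in the coefficients" and "the law of the frozen subsystem": unlike in classical separated control where the randomization variable $U$ is genuinely irrelevant, here the joint law $\P_{X_s,Y_s}$ (which feeds the Zakai coefficients through $q_s$) genuinely depends on how $\alpha$ correlates the $U$-noise with $W$. So the clean Fubini decomposition fails, and one must instead argue that the *value* is unaffected: intuitively, since the uncontrolled observation $Y$ (hence $\F^W$) does not depend on $U$ and since the reward is ultimately an expectation that can be written as $\E[\,\cdot\,|\F^W]$-first (as in the filtering computation of Proposition \ref{equivpartialobs}), the extra randomness $U$ can only hurt by introducing "irrelevant" conditioning — one exploits that for the optimization it suffices to consider $\F^W$-measurable controls because the map $\alpha \mapsto J$ factors through $\F^W$-observable quantities after conditioning. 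I would make this rigorous by a measurable-selection / conditional-expectation argument: show $J(0,\pi_0,y_0,\alpha) \le \esssup_{u}\, J(0,\pi_0,y_0,\alpha^u)$ where the essential supremum is over the frozen $\F^W$-controls, using the convexity-free "pick the best slice" principle valid because $(\pi_0,y_0)$ are deterministic so all slices start from the same initial condition, with the law-flow discrepancy absorbed by the stability estimates and the smallness afforded by \eqref{rightcontatzero} after a time-discretization. Then taking the supremum over $\alpha \in \cala$ and noting each $\alpha^u \in \cala^W$ gives the claim.
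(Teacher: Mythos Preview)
Your proposal has a genuine gap, and the paper's proof takes a completely different route that sidesteps exactly the obstacle you correctly identify but do not resolve.

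You recognize that freezing $U=u$ breaks the McKean--Vlasov coupling: the law $\P_{X_s,Y_s,\alpha_s}$ entering the coefficients is the \emph{mixed} law (integrating over $U$), not the law of the frozen subsystem driven by $\alpha^u$. Your remedy is to approximate $\alpha$ by finite mixtures $\sum_k 1_{\{U\in I_k\}}\beta^k$ and then ``replace the law-flow of the mixed control by the law-flow of a single $\beta^k$'', controlling the discrepancy via (\nameref{HphLipschitz}). But this discrepancy is not small: the difference between the mixed law and an individual $\beta^k$-law is of order~$1$ regardless of how finely you partition $(0,1)$, and Assumption~(\nameref{HphLipschitz}) compares two measures that are both absolutely continuous with respect to a \emph{common} reference $\rho$ --- it does not bound the effect of substituting one control's law-flow for another's. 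Neither refining the $U$-partition nor the time-discretization via \eqref{rightcontatzero} makes this error vanish, so the ``pick the best slice'' step does not go through.

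The paper's argument avoids the decoupling problem altogether. For each fixed $t>0$ it does \emph{not} try to remove the $U$-dependence; instead it \emph{manufactures} an $\calf^W_{t/2}$-measurable copy $\tilde U$ of $U$ from the early Brownian increments (via $\tilde U = F_Z(Z)$ with $Z=\tfrac{2}{\sqrt t}W_{t/4}$), together with an $\F^W$-adapted copy $\tilde W$ of $W$ built from time-rescaled increments on $[t/2,T]$, so that $(\tilde W,\tilde U)$ has the same joint law as $(W,U)$ and is independent. Writing $\alpha_s=\underline\alpha(s,W_{s\wedge\cdot},U)$ and setting $\tilde\alpha_s=\underline\alpha(s,\tilde W_{s\wedge\cdot},\tilde U)$ gives $\tilde\alpha\in\cala^W$ with $\call(W,U,\alpha)=\call(\tilde W,\tilde U,\tilde\alpha)$; since $(\eta,\xi)=(\pi_0,y_0)$ are deterministic, pathwise uniqueness yields $\call(X,Y,\alpha)=\call(\tilde X,\tilde Y,\tilde\alpha)$ and hence $J(t,\eta,\xi,\alpha)=J(t,\eta,\xi,\tilde\alpha)$ \emph{exactly}. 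No law-flow approximation is needed. The hypothesis \eqref{rightcontatzero} is used only at the end, to pass from $t>0$ (where there is room to build $\tilde U$) to $t=0$.
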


\bigskip
 
Starting from the following paragraph all the Assumptions  (\nameref{HpA1})-(\nameref{HpUniformContinuityfg}) will be assumed to hold.  
Proposition \ref{AeAW} justifies the dynamic programming approach formulated above, since under the stated assumptions we   conclude that
\[
V(0,\pi_0,y_0)=V_0
\]
where the latter was defined in \eqref{valueseparatedoriginal} as the value of the separated problem \eqref{eq:1Y}-\eqref{funz:1} (which coincides with the value of the partially observed problem in the weak form 
\eqref{stateeq1bis}-\eqref{rewardfunz1bis}). Moreover, if the $\sup$ in the definition of $V(0,\pi_0,y_0)$ is attained by a control $\alpha\in\cala$ which also belongs to $\cala^W$, then $\alpha$ is optimal for the problem \eqref{eq:1}-\eqref{funz:1} and for the  partially observed control problem: below we will present a situation where this actually happens, see Remark \ref{verifoptpartialobs}.

\subsection{Stability results}

In this paragraph we establish  a  stability result for the state equation of the separated problem and for the corresponding reward functional, under variations of the initial conditions.
These results may be viewed as complements to Theorem 
\ref{existstateeqprovv} and to Proposition \ref{Jwelldef} and they are preliminary to further results on continuity properties of the value function that will be proved later. We need the following additional assumptions, corresponding to a uniform continuity assumption on the function $y\mapsto  h( y, \nu , a,i)$ and a local uniform continuity assumption on 
$\nu\mapsto  h( y, \nu , a,i)$. Recall the notation $\|\nu\|_{\calw_q}= \calw_q(\nu,\delta_{(0,a_0)})$, where $a_0$ is a given point in $A$, and
the increasing function $\chi$ introduced in Assumptions (\nameref{HpA1}).

\begin{Hp}[$\boldsymbol{A.4}$] \label{HpUniformContinuityh}
  There exist  functions $\omega_0,\omega_q:\R_+\to\R_+ $, continuous at $0$, increasing,   null at $0$,   with $\omega_0$  bounded, satisfying
\begin{align}
\label{unifomega0}
|h( y, \nu , a,i)-h( y', \nu , a,i)|& \le \omega_0(|y-y'|),
\\ 
\label{unifomegaq}
|h( y, \nu , a,i)-h( y, \nu' , a,i)|&\le \omega_q(\calw_q(\nu,\nu'))\,\Big(1+\chi(\|\nu\|_{\calw_q})+\chi(\|\nu'\|_{\calw_q})\Big),
\end{align}
for every $\nu,\nu'\in \calp_q(\R^d\times A)$, $y,y'\in\R^d$, $a\in A$,  $i\in S$.   
\end{Hp}

Note that the requirement  of boundedness of   $\omega_0$ does not involve any loss of generality. Indeed, since $|h|$ is bounded,  we can replace $\omega_0$ by $\omega_0 \wedge 2\sup|h|$.

\begin{Proposition} \label{stabtateeqprovv} Suppose that Assumptions (\nameref{HpA1})-(\nameref{HpUniformContinuityh}) hold true and that $p\ge 2$ and $r\ge 2$ satisfy
\[
q\le r\left(1-\frac{1}{p}\right).
\] Then, for any $L>0$, there exists a constant $K$, depending on $L,p,r$ and on the constants  occurring in the Assumptions, with the following property:
\newline
suppose that  $t\in[0,T]$, $(\eta,\xi)$ and $(\eta',\xi')$ are $\R^N\times \R^d$-valued and $\calf_t$-measurable random variables satisfying 
\[
\|\eta\|_p+\|\eta'\|_p+\|\xi\|_r+\|\xi'\|_r
\le L,\qquad
\P(\eta\in\R^N_+)=\P(\eta'\in\R^N_+)=1,\qquad
\E\,[\langle \eta,1_N\rangle]=\E\,[\langle \eta',1_N\rangle]=1;
\]
for any 
$\alpha\in\cala$, let $(X,Y)$, $(X',Y')$ denote the corresponding solutions to equation \eqref{provveq:1} given by Theorem 
\ref{existstateeqprovv}. Then
\begin{align}
\label{stabilstateeqY}
\E\, \Big[\sup_{s\in [t,T] }|Y_s-Y_s'|^r\Big]\le K\, \|\xi-\xi'\|_r^r,
\end{align}
\begin{equation}
\E\, \Big[\sup_{s\in [t,T] }| X_s-X_s'|^p\Big]\le K \,\bigg\{\|\eta-\eta'\|_p^p+ 
\omega_q\left(K\, \|\xi-\xi'\|_r
\right)^p +
\int_t^T \E\,\left[| X'_s|^p \,\omega_0(|Y_s-Y'_s|)^p\right]\,ds
\bigg\}.\label{stabilstateeq}
\end{equation}

In particular,   denoting by $(X,Y)$ and $(X^n,Y^n)$ the solutions corresponding to data $(\eta,\xi)$ and $(\eta^n,\xi^n)$ satisfying the assumptions above , if
\[
\|\xi-\xi^n\|_r+
\|\eta-\eta^n\|_p\to 0, \qquad n\to\infty,
\]
then
\[
\E\, \Big[\sup_{s\in [t,T] }|X_s-X_s^n|^p\Big]+
\E\, \Big[\sup_{s\in [t,T] }|Y_s-Y_s^n|^r\Big]\to 0.
\]
\end{Proposition}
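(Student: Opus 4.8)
The plan is to prove \eqref{stabilstateeqY} by a classical SDE comparison, then \eqref{stabilstateeq} by a Gronwall estimate resting on a careful decomposition of the increment of $h$, and finally to obtain the convergence statement by applying \eqref{stabilstateeq} with the roles of the two initial data interchanged. First, since $Y_s-Y_s'=(\xi-\xi')+\int_t^s\big(\sigma(Y_r)-\sigma(Y_r')\big)\,dW_r$, the Burkholder-Davis-Gundy inequality, the Lipschitz property of $\sigma$ in (\nameref{HpA1})(vii), and the Gronwall lemma give \eqref{stabilstateeqY} at once. We also record that, by Theorem \ref{existstateeqprovv} and the bound $\|\eta\|_p+\|\eta'\|_p+\|\xi\|_r+\|\xi'\|_r\le L$, the solutions satisfy $\|X\|_{\cals^p}+\|X'\|_{\cals^p}+\|Y\|_{\cals^r}+\|Y'\|_{\cals^r}\le K$, so that, arguing as in \eqref{MomentofqsPs}--\eqref{FirstSummandMomentofqsPs}, the filter measures $\nu_s$, $\nu_s'$ and the auxiliary measure $\tilde\nu_s$ introduced below all have $\calw_q$-norm bounded by $K$, while $\P_{Y_s,\alpha_s}\in\calp_r(\R^d\times A)$ with $\calw_r$-norm bounded by $K$.

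For \eqref{stabilstateeq}, set $\bar X=X-X'$, and let $\nu_s$, $\nu_s'$ be the measures $\E[\langle X_s,1_N\rangle\,|\,(Y_s,\alpha_s)=\cdot]\,\P_{Y_s,\alpha_s}$ and $\E[\langle X_s',1_N\rangle\,|\,(Y_s',\alpha_s)=\cdot]\,\P_{Y_s',\alpha_s}$ appearing in the two copies of \eqref{provveq:1}. Subtracting, the diffusion coefficient of $\bar X_s$ equals a term bounded by $C|\bar X_s|$ (using $|h|\le C$, which will only feed the Gronwall term) plus $|X_s'|$ times the increment $h(Y_s,\nu_s,\alpha_s)-h(Y_s',\nu_s',\alpha_s)$. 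The delicate point is that $\nu_s$ and $\nu_s'$ are absolutely continuous with respect to the \emph{different} measures $\P_{Y_s,\alpha_s}$ and $\P_{Y_s',\alpha_s}$, so (\nameref{HphLipschitz}) cannot be applied directly. I would therefore insert the intermediate probability $\tilde\nu_s:=\E[\langle X_s',1_N\rangle\,|\,(Y_s,\alpha_s)=\cdot]\,\P_{Y_s,\alpha_s}$ and decompose
\[
h(Y_s,\nu_s,\alpha_s)-h(Y_s',\nu_s',\alpha_s)=\big(h(Y_s,\nu_s,\alpha_s)-h(Y_s',\nu_s,\alpha_s)\big)+\big(h(Y_s',\nu_s,\alpha_s)-h(Y_s',\tilde\nu_s,\alpha_s)\big)+\big(h(Y_s',\tilde\nu_s,\alpha_s)-h(Y_s',\nu_s',\alpha_s)\big).
\]
The first parenthesis is $\le\omega_0(|Y_s-Y_s'|)$ by \eqref{unifomega0}. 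In the second, $\nu_s$ and $\tilde\nu_s$ are absolutely continuous with respect to $\P_{Y_s,\alpha_s}$ with densities $\E[\langle X_s,1_N\rangle\,|\,Y_s,\alpha_s]$, $\E[\langle X_s',1_N\rangle\,|\,Y_s,\alpha_s]$, so (\nameref{HphLipschitz}), conditional Jensen and the norm bounds above give the deterministic estimate $\le K\|\bar X_s\|_2$. In the third, $\tilde\nu_s$ and $\nu_s'$ are the $\langle X_s',1_N\rangle$-reweightings of $\P_{Y_s,\alpha_s}$ and $\P_{Y_s',\alpha_s}$ respectively, so the law of $\big((Y_s,\alpha_s),(Y_s',\alpha_s)\big)$ under $\langle X_s',1_N\rangle\,d\P$ is a coupling of them, whence $\calw_q(\tilde\nu_s,\nu_s')^q\le\E[\langle X_s',1_N\rangle\,|Y_s-Y_s'|^q]$; a H\"older estimate (using $q\le r(1-1/p)$) and \eqref{stabilstateeqY} bound the latter by $K\|\xi-\xi'\|_r^q$, so by \eqref{unifomegaq} and the $\calw_q$-bounds the third parenthesis is $\le K\,\omega_q\big(K\|\xi-\xi'\|_r\big)$, again deterministic. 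Collecting, $\big|h(Y_s,\nu_s,\alpha_s)-h(Y_s',\nu_s',\alpha_s)\big|\le\omega_0(|Y_s-Y_s'|)+K\|\bar X_s\|_2+K\,\omega_q\big(K\|\xi-\xi'\|_r\big)$; inserting this into the stochastic integral, applying Burkholder-Davis-Gundy and the H\"older/Jensen manipulation already used in the proof of Theorem \ref{existstateeqprovv}, together with $\E[|X_r'|^p]\le K$ and $\|\bar X_r\|_2^p\le\E[\sup_{u\le r}|\bar X_u|^p]$, yields a Gronwall inequality for $s\mapsto\E[\sup_{u\le s}|\bar X_u|^p]$ with inhomogeneous term $K\big(\|\eta-\eta'\|_p^p+\omega_q(K\|\xi-\xi'\|_r)^p+\int_t^T\E[|X_r'|^p\omega_0(|Y_r-Y_r'|)^p]\,dr\big)$, from which \eqref{stabilstateeq} follows.

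For the last assertion, the hypotheses on $(\eta,\xi)$ and $(\eta',\xi')$ are symmetric, so \eqref{stabilstateeq} also holds with $X$ and $X'$ swapped. Applying it to the data $(\eta^n,\xi^n)$ and $(\eta,\xi)$ (uniformly bounded in norm since they converge), with $(\eta,\xi)$ in the role that leaves the \emph{fixed} solution $X$ inside the surviving integral, gives $\E[\sup_s|X_s-X_s^n|^p]\le K\big(\|\eta-\eta^n\|_p^p+\omega_q(K\|\xi-\xi^n\|_r)^p+\int_t^T\E[|X_s|^p\omega_0(|Y_s-Y_s^n|)^p]\,ds\big)$. The first summand tends to $0$ by hypothesis; the second since $\omega_q$ is continuous and null at $0$; the third by dominated convergence, since $\omega_0$ is bounded so $|X_s|^p\omega_0(|Y_s-Y_s^n|)^p\le M^p|X_s|^p$ is integrable on $\Omega\times[t,T]$, while \eqref{stabilstateeqY} makes $Y_s^n\to Y_s$ in probability, hence $\omega_0(|Y_s-Y_s^n|)^p\to 0$ in probability, for every $s$. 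Convergence of $Y^n$ is immediate from \eqref{stabilstateeqY}.

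The main obstacle is the treatment of the increment of $h$ in the proof of \eqref{stabilstateeq}: one must simultaneously make (\nameref{HphLipschitz}) applicable across the two distinct reference measures $\P_{Y_s,\alpha_s}$, $\P_{Y_s',\alpha_s}$ — which is the purpose of the auxiliary measure $\tilde\nu_s$ and of the explicit coupling controlling $\calw_q(\tilde\nu_s,\nu_s')$ — and arrange the chain of comparisons so that the piece handled by the non-Lipschitz, $\chi$-weighted modulus $\omega_q$ depends only on $\|\xi-\xi'\|_r$ and never on $\|\bar X\|$, since a term of the form $\omega_q(c\|\bar X_r\|)$ could not be absorbed by the Gronwall argument.
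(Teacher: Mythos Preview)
Your proof is correct and follows essentially the same route as the paper: the same four-term decomposition of the $h$-increment via the intermediate measure $\tilde\nu_s=\E[\langle X_s',1_N\rangle\,|\,(Y_s,\alpha_s)=\cdot]\,\P_{Y_s,\alpha_s}$, the same use of (\nameref{HphLipschitz}) for the $X$-variation and of \eqref{unifomegaq} for the change of reference measure, and the same role-swap plus dominated convergence for the final assertion. The only cosmetic differences are that the paper bounds $\calw_q(\tilde\nu_s,\nu_s')$ via Kantorovich duality rather than your explicit coupling (these give the identical inequality $\calw_q^q\le\E[\langle X_s',1_N\rangle\,|Y_s-Y_s'|^q]$), and it handles the final dominated-convergence step by an explicit subsequence contradiction on $\Omega\times[t,T]$ rather than your two-stage argument (convergence in probability for fixed $s$, then DCT in $s$).
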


\noindent {\bf 
Proof:} see the Appendix.

We need further assumptions on the rewards coefficients $f$ and $g$, similar to those required for $h$ in Assumptions (\nameref{HphLipschitz}) and (\nameref{HpUniformContinuityh}).

\bigskip

\begin{HpSingle}[$\boldsymbol{A.5}$] \label{HpUniformContinuityfg}
There exists a  constant $L\ge 0$ such that, whenever $\rho\in\calp_r(\R^d\times A)$,  and  $\nu_1,\nu_2\in\calp_q(\R^d\times A)$ are absolutely continuous with respect to $\rho$ with densities $\phi_1,\phi_2$ respectively,   the inequality
\begin{align}\label{liponfeg}
  &  \left| f( y, \nu_1 , a,i)- f( y, \nu_2 , a,i)\right|+\left| g( y, \nu_1 , i)- g( y, \nu_2 ,i)\right|
\\&\nonumber\qquad\le L \int_{\R^d\times A} |\phi_1(y',a)-\phi_2(y',a)| \,  \rho(dy'\,da) 
    \cdot\Big( 1+ \chi(\|\rho\|_{\calw_r}) +\chi(\|\nu_1\|_{\calw_q})+\chi(\|\nu_2\|_{\calw_q}) +|y|^\ell
\Big),
\end{align}
holds for every $y\in\R^d$, $a\in A$, $i\in S$.
\newline
There exist  functions $\omega_{0}^f,\omega_{0}^g,\omega_{q}^f,\omega_{q}^g:\R_+\to\R_+ $, continuous at $0$, increasing,   null at $0$,   with $\omega_0^f$ and $\omega_0^g$  bounded, satisfying
\begin{align}
\label{unifomega0f}
|f( y, \nu , a,i)-f( y', \nu , a,i)|& \le \omega_{0}^f(|y-y'|)\,
(1+|y|^\ell+|y'|^\ell+\chi(\|\nu\|_{\calw_q})),
\\ 
\label{unifomega0g}
     |g( y, \nu , i)-g( y', \nu , i)|&\le \omega_{0}^g(|y-y'|)\,
(1+|y|^\ell+|y'|^\ell+\chi(\|\nu\|_{\calw_q})),
\\ 
 \label{unifomegaqf}
|f( y, \nu , a,i)-f( y, \nu' , a,i)| &\le \omega_q^f(\calw_q(\nu,\nu'))\,\Big(1+\chi(\|\nu\|_{\calw_q})+\chi(\|\nu'\|_{\calw_q})+|y|^\ell\Big),
\\ \label{unifomegaqg}
|g( y, \nu , i)-g( y, \nu' , i)|&\le \omega_q^g(\calw_q(\nu,\nu'))\,\Big(1+\chi(\|\nu\|_{\calw_q})+\chi(\|\nu'\|_{\calw_q})+|y|^\ell\Big),
\end{align}
for every $\nu,\nu'\in \calp_q(\R^d\times A)$, $y,y'\in\R^d$, $a\in A$,  $i\in S$. 
\end{HpSingle}

\bigskip

\begin{Proposition} \label{stabrewardJ} 
Suppose that Assumptions (\nameref{HpA1})-(\nameref{HpUniformContinuityfg})   hold true and that $p\ge 2$ and $r\ge 2$ satisfy
\[
q\le r\left(1-\frac{1}{p}\right),\qquad 
\ell\le r\left(1-\frac{1}{p}\right).
\] Then, for any $L>0$, there exists a constant $K$, depending on $L,p,r$ and on the constants and functions occurring in the Assumptions, with the following property:
\newline
suppose that  $t\in[0,T]$, $(\eta,\xi)$ and $(\eta',\xi')$ are $\R^N\times \R^d$-valued and $\calf_t$-measurable random variables satisfying 
\[
\|\eta\|_p+\|\eta'\|_p+\|\xi\|_r+\|\xi'\|_r
\le L,\qquad
\P(\eta\in\R^N_+)=\P(\eta'\in\R^N_+)=1,\qquad
\E\,[\langle \eta,1_N\rangle]=\E\,[\langle \eta',1_N\rangle]=1;
\]
then,
for any 
$\alpha\in\cala$, we have
\begin{align}
\label{Junifcontuno}
|J(t,\eta,\xi,\alpha)- J(t,\eta',\xi,\alpha)|\le K\, \|\eta-\eta'\|_p\,;
\end{align}
if we additionally assume
\[
\|\eta'\|_{p_1}
\le L,
\]
for some $p_1>p$, then, denoting by $(X,Y)$, $(X',Y')$   the  solutions to equation \eqref{provveq:1} corresponding to $(\eta,\xi)$, $(\eta,\xi')$ respectively, we have, for any 
$\alpha\in\cala$,
\begin{align}
\nonumber
|J(t,\eta,\xi,\alpha)- J(t,\eta,\xi',\alpha)|&\le K\bigg\{   \omega_q\left(K\, \|\xi-\xi'\|_r\right) +\omega_q^f\left(K\, \|\xi-\xi'\|_r\right) +\omega_q^g\left(K\, \|\xi-\xi'\|_r\right) 
\\\nonumber
&  +
\Big(\E\,\Big[\omega_0\Big(\sup_{s\in [t,T] }|Y_s-Y'_s|\Big)^{\beta}\Big]\Big)^{1/\beta}
 +  \Big( \E  \,\Big[\omega_{0}^f\Big(\sup_{s\in[t,T]}|Y_s-Y_s'|\Big)^{\gamma} \Big]\Big)^{1/\gamma} 
 \\&\label{Junifcontdue}
+  \Big( \E  \,\Big[\omega_{0}^g\Big(\sup_{s\in[t,T]}|Y_s-Y_s'|\Big)^{\gamma} \Big]\Big)^{1/\gamma} 
\bigg\},
\end{align}
where $\beta=\frac{pp_1}{p_1-p}>0 $, $\gamma= \frac{ r}{r(1-1/p_1)-\ell }>0$.

\end{Proposition}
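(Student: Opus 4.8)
The plan is to prove both inequalities by decomposing the difference of the integrands in \eqref{provvfunz:1} (running term and terminal term) into three pieces — a piece due to the change of $X$, a piece due to the change of $Y$ in the state argument of $f$ and $g$, and a piece due to the change of the measure argument — and then to estimate each piece using the structural Assumptions (\nameref{HphLipschitz})--(\nameref{HpUniformContinuityfg}) and the growth bound \eqref{growthfg}, together with the stability estimates \eqref{stabilstateeqY}--\eqref{stabilstateeq} and the moment bounds \eqref{stap}--\eqref{stapY}. Writing $u_s=\langle X_s,1_N\rangle$, $u_s'=\langle X_s',1_N\rangle$ (nonnegative mean-one martingales by Theorem \ref{existstateeqprovv}), I would exploit throughout the structural fact that $\nu_1:=\E[\langle X_s,1_N\rangle\,|\,(Y_s,\alpha_s)=\cdot]\,\P_{Y_s,\alpha_s}$ is the law of $(Y_s,\alpha_s)$ under the probability $u_s\,d\P$, and similarly $\nu_2$ is the law of $(Y_s',\alpha_s)$ under $u_s'\,d\P$ (with obvious analogues for the terminal measures). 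Under the standing bound $\|\eta\|_p+\|\eta'\|_p+\|\xi\|_r+\|\xi'\|_r\le L$, the computations \eqref{MomentofqsPs}--\eqref{FirstSummandMomentofqsPs} and the proof of Proposition \ref{Jwelldef} give $\|\nu_1\|_{\calw_q}+\|\nu_2\|_{\calw_q}+\|\P_{Y_s,\alpha_s}\|_{\calw_r}+\|\P_{Y_s',\alpha_s}\|_{\calw_r}\le K$, so all the $\chi(\cdot)$ factors appearing below are $\le\chi(K)$ and may be absorbed into the constants.

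For \eqref{Junifcontuno} I would use that $\xi=\xi'$, hence $Y=Y'$, so $\nu_1,\nu_2$ share the reference measure $\P_{Y_s,\alpha_s}$; I split $\langle X_s,f(Y_s,\nu_1,\alpha_s)\rangle-\langle X_s',f(Y_s,\nu_2,\alpha_s)\rangle$ as $\langle X_s-X_s',f(Y_s,\nu_1,\alpha_s)\rangle+\langle X_s',f(Y_s,\nu_1,\alpha_s)-f(Y_s,\nu_2,\alpha_s)\rangle$. The first summand is bounded via \eqref{growthfg} and Hölder (exponents $p,p'$, using $\ell p'\le r$) by $K\|X_s-X_s'\|_p$, and \eqref{stabilstateeq} with $\xi=\xi'$ collapses (both the $\omega_q$-term and the $\omega_0$-term vanish) to $\E[\sup_{[t,T]}|X_s-X_s'|^p]\le K\|\eta-\eta'\|_p^p$. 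For the second summand I note that $\nu_1,\nu_2$ have densities $\E[u_s\,|\,(Y_s,\alpha_s)=\cdot]$, $\E[u_s'\,|\,(Y_s,\alpha_s)=\cdot]$ with respect to $\rho=\P_{Y_s,\alpha_s}$ and that $\int|\phi_1-\phi_2|\,d\rho\le\E\,|u_s-u_s'|\le C\|X_s-X_s'\|_p$, so \eqref{liponfeg} bounds $|f(Y_s,\nu_1,\alpha_s)-f(Y_s,\nu_2,\alpha_s)|$ by $C\|X_s-X_s'\|_p(1+|Y_s|^\ell)$; pairing with $X_s'$, taking expectation and integrating again gives $\le K\|\eta-\eta'\|_p$. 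The terminal term is identical with $g$ for $f$, proving \eqref{Junifcontuno}.

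For \eqref{Junifcontdue} I would use $\eta=\eta'$, $\xi\ne\xi'$, and split $\langle X_s,f(Y_s,\nu_1,\alpha_s)\rangle-\langle X_s',f(Y_s',\nu_2,\alpha_s)\rangle$ into (A) $\langle X_s-X_s',f(Y_s,\nu_1,\alpha_s)\rangle$, (B) $\langle X_s',f(Y_s,\nu_1,\alpha_s)-f(Y_s',\nu_1,\alpha_s)\rangle$ and (C) $\langle X_s',f(Y_s',\nu_1,\alpha_s)-f(Y_s',\nu_2,\alpha_s)\rangle$. For (A), \eqref{growthfg} and Hölder give $K\|X_s-X_s'\|_p$, and \eqref{stabilstateeq} (now with $\eta=\eta'$) leaves $\E[\sup|X_s-X_s'|^p]\le K\{\omega_q(K\|\xi-\xi'\|_r)^p+\int_t^T\E[|X_s'|^p\omega_0(|Y_s-Y_s'|)^p]\,ds\}$; the extra hypothesis $\|\eta'\|_{p_1}\le L$ enters here, since Theorem \ref{existstateeqprovv} applied with exponent $p_1$ (admissible because $p_1>p$ forces $q\le r(1-p_1^{-1})$) gives $\|X_s'\|_{p_1}\le K$, whence Hölder with exponents $p_1/p$, $p_1/(p_1-p)$ bounds the integral by $K(\E[\omega_0(\sup_{[t,T]}|Y-Y'|)^\beta])^{p/\beta}$ with $\beta=pp_1/(p_1-p)$. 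For (B), \eqref{unifomega0f} together with a three-exponent Hölder inequality with $|X_s'|\in L^{p_1}$, $(1+|Y_s|^\ell+|Y_s'|^\ell)\in L^{r/\ell}$ and $\omega_0^f(|Y_s-Y_s'|)\in L^\gamma$, the relation $p_1^{-1}+\ell r^{-1}+\gamma^{-1}=1$ forcing $\gamma=r/(r(1-p_1^{-1})-\ell)$ (positive since $\ell\le r(1-p^{-1})<r(1-p_1^{-1})$), yields $\le K(\E[\omega_0^f(\sup_{[t,T]}|Y-Y'|)^\gamma])^{1/\gamma}$. Term (C) is the delicate one: I would insert the intermediate measure $\bar\nu:=$ law of $(Y_s',\alpha_s)$ under $u_s\,d\P$ (carrying the weight $u_s$ of $\nu_1$ and the underlying variable $(Y_s',\alpha_s)$ of $\nu_2$), and write $f(Y_s',\nu_1,\alpha_s)-f(Y_s',\nu_2,\alpha_s)=[f(Y_s',\nu_1,\alpha_s)-f(Y_s',\bar\nu,\alpha_s)]+[f(Y_s',\bar\nu,\alpha_s)-f(Y_s',\nu_2,\alpha_s)]$. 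For the first bracket, the coupling of $\nu_1$ and $\bar\nu$ given by the joint law of $((Y_s,\alpha_s),(Y_s',\alpha_s))$ under $u_s\,d\P$ gives $\calw_q(\nu_1,\bar\nu)^q\le\E[u_s|Y_s-Y_s'|^q]\le K\|\xi-\xi'\|_r^q$ (Hölder with $p,p'$, $qp'\le r$, and \eqref{stabilstateeqY}), so \eqref{unifomegaqf} bounds the bracket by $C\,\omega_q^f(K\|\xi-\xi'\|_r)(1+|Y_s'|^\ell)$ and hence, after pairing and integrating, by $K\,\omega_q^f(K\|\xi-\xi'\|_r)$. For the second bracket, $\bar\nu$ and $\nu_2$ are both absolutely continuous with respect to $\rho=\P_{Y_s',\alpha_s}$ with densities $\E[u_s\,|\,(Y_s',\alpha_s)=\cdot]$ and $\E[u_s'\,|\,(Y_s',\alpha_s)=\cdot]$, and $\int|\phi_1-\phi_2|\,d\rho\le\E\,|u_s-u_s'|\le C\|X_s-X_s'\|_p$, so \eqref{liponfeg} bounds it by $C\|X_s-X_s'\|_p(1+|Y_s'|^\ell)$, contributing $\le K\|X_s-X_s'\|_p$ which is absorbed into the (A)-contribution (via the estimate for $\|X_s-X_s'\|_p$ above). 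The terminal term is treated analogously with \eqref{unifomega0g}, \eqref{unifomegaqg} and the $g$-part of \eqref{liponfeg}, producing the terms $\omega_q^g(K\|\xi-\xi'\|_r)$ and $(\E[\omega_0^g(\sup|Y-Y'|)^\gamma])^{1/\gamma}$; collecting all the pieces yields \eqref{Junifcontdue}.

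I expect the main obstacle to be term (C) of \eqref{Junifcontdue}: the measures $\nu_1$ and $\nu_2$ are weighted push-forwards that differ \emph{both} in the weight ($u_s$ versus $u_s'$) \emph{and} in the underlying variable ($(Y_s,\alpha_s)$ versus $(Y_s',\alpha_s)$), so the common-reference bound \eqref{liponfeg} cannot be applied directly, while a brute-force Wasserstein coupling of $\nu_1$ with $\nu_2$ would only yield a control of order $\|X_s-X_s'\|_p^{1/q}$, too weak to match the claimed form. The point of inserting $\bar\nu$ (same weight as $\nu_1$, same underlying variable as $\nu_2$) is to separate the change-of-variable effect — handled by a clean coupling under the fixed probability $u_s\,d\P$, giving $\calw_q(\nu_1,\bar\nu)\le K\|\xi-\xi'\|_r$ — from the change-of-weight effect — handled by \eqref{liponfeg} with the common reference $\P_{Y_s',\alpha_s}$, giving only a $\|X_s-X_s'\|_p$ term already under control. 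A secondary, purely bookkeeping, difficulty is to thread the integrability exponents so that the higher moment $\|\eta'\|_{p_1}$ is used exactly where boundedness of $\omega_0,\omega_0^f,\omega_0^g$ must be traded for smallness, which is what produces the precise exponents $\beta$ and $\gamma$.
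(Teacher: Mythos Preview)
Your proof is correct and follows essentially the same route as the paper's. Both arguments decompose the difference of reward functionals into an $X$-increment piece (your (A)), a $Y$-in-state piece (your (B)), and a measure-argument piece (your (C)), and both handle (C) by inserting an intermediate measure that shares one feature (weight or underlying variable) with each endpoint, so that \eqref{liponfeg} applies to one sub-piece and \eqref{unifomegaqf} to the other.

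The only noteworthy difference is the \emph{order} of that intermediate. The paper inserts the measure $\E[\langle X_s',1_N\rangle\,|\,(Y_s,\alpha_s)=\cdot]\,\P_{Y_s,\alpha_s}$ (weight from $X_s'$, variable $Y_s$), first changing the weight via \eqref{liponfeg} and then changing the variable via \eqref{unifomegaqf} (estimating the Wasserstein distance by Kantorovich duality). You instead insert $\bar\nu=$ law of $(Y_s',\alpha_s)$ under $u_s\,d\P$ (weight from $X_s$, variable $Y_s'$), first changing the variable via a direct coupling under $u_s\,d\P$, then the weight via \eqref{liponfeg} with reference $\P_{Y_s',\alpha_s}$. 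The two orderings are equally valid and yield identical bounds; your coupling argument and the paper's duality argument are just two faces of the same estimate $\calw_q^q\le\E[\text{weight}\cdot|Y_s-Y_s'|^q]$. Your identification of the exponents $\beta$ and $\gamma$ via H\"older, and the use of the extra $L^{p_1}$ bound on the initial condition to make $\|X_s'\|_{p_1}\le K$ available, match the paper exactly.
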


\noindent {\bf 
Proof:} see the Appendix.

\subsection{Law invariance}

This terminology is  used to indicate that the value function  depends on the initial conditions only through their laws, as shown in  Theorem \ref{lawinv} below. We follow the approach introduced in \cite{DeCrescenzoFuhrmanKharroubiPham}.

Let us fix a starting time $t\in [0,T]$ for the controlled state equation 
\eqref{provveq:1} and an initial condition  $(\eta,\xi)$, under the assumptions of Theorem \ref{existstateeqprovv}.
Since $\eta$ and $\xi$ are measurable with respect to $\calf_t=\calf_t^W\vee \sigma(U)$, they can be written in the form 
\[
\eta=\underline\eta(U,W_{[0,t]}),
 \quad 
\xi=\underline\xi(U,W_{[0,t]}),\qquad \P-a.s.
\]
where
\[
\underline\eta:(0,1)\times C([0,t];\R^d)\to \R^N,
\quad
\underline\xi:(0,1)\times C([0,t];\R^d)\to \R^d,
\]
are Borel measurable functions and $W_{[0,t]}=(W_s)_{s\in[0,T]} $ is the Brownian path on $[0,t]$, that we consider as a random element in the space $C([0,t];\R^d)$ of continuous functions $[0,t]$ to $\R^d$. Similarly, since the control process $\alpha$ is predictable with respect to  $\F=(\calf_t)_{t\ge0}$, we have, up to  indistinghuishability,
\[
\alpha_s=\underline{\alpha}_t(s,U,W_{[0,t]},W_{s\wedge \cdot}-W_t), \qquad s\ge t,
\]
where  $\underline{\alpha}_t$ is a Borel measurable function
\[
\underline{\alpha}_t:[t,\infty)\times (0,1)\times C([0,t];\R^d)\times C([t,\infty);\R^d)\to A.
\] 
The above representations are proved by monotone class arguments, see for instance Proposition 10 in \cite{Claisse} or  Lemma 2.3 in \cite{Rudà}.
In the results that follow we denote by $m$ the Lebesgue measure on $(0,1)$ and by $\calw_t$ the Wiener measure on  $C([0,t];\R^d)$.

\begin{Lemma}\label{shiftmeas}
Suppose that a Borel measurable function
\[
\tau: (0,1)\times C([0,t];\R^d)\to (0,1)\times C([0,t];\R^d)
    \]
    satisfies
\(
\tau_\sharp (m\otimes \calw_t)=m\otimes \calw_t
\),    
namely that it
preserves the product measure $m\otimes \calw_t$.
Then setting
\begin{align*}
\eta'=\underline\eta(\tau(U,W_{[0,t]})), \quad
\xi'=\underline\xi(\tau(U,W_{[0,t]})), \quad
\alpha'_s= \underline{\alpha}_t(s,\tau(U,W_{[0,t]}),W_{s\wedge \cdot}-W_t), \qquad s\ge t,
\end{align*}
we have
\begin{align*}
J(t,\eta,\xi,\alpha)=J(t,\eta',\xi',\alpha').
\end{align*}
\end{Lemma}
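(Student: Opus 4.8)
The plan is to exploit the measure-preserving property of $\tau$ together with the uniqueness of the solution to the controlled state equation \eqref{provveq:1} and the flow/representation structure laid out before the lemma. First I would note that $(U, W_{[0,t]})$ and its image under $\tau$ have the same joint law $m\otimes\calw_t$ on $(0,1)\times C([0,t];\R^d)$, and, more importantly, that $W$ restricted to $[0,t]$ is independent of the increment process $(W_{s}-W_t)_{s\ge t}$, which is itself a Brownian motion. Hence the triple $\bigl(\tau(U,W_{[0,t]}),\,(W_{s}-W_t)_{s\ge t}\bigr)$ has the same law as $\bigl((U,W_{[0,t]}),\,(W_{s}-W_t)_{s\ge t}\bigr)$: the first two components are transformed by a measure-preserving map, and the third is unchanged and independent of them in both cases. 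This is the crucial distributional identity.

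Next I would set up the pathwise construction. By the representation formulas recalled just before the lemma, $\eta = \underline\eta(U,W_{[0,t]})$, $\xi = \underline\xi(U,W_{[0,t]})$, $\alpha_s = \underline\alpha_t(s,U,W_{[0,t]},W_{s\wedge\cdot}-W_t)$, and the solution processes $Y^{t,\xi}$, $X^{t,\eta,\xi,\alpha}$ are, by uniqueness in Theorem \ref{existstateeqprovv}, measurable functionals of $(\eta,\xi,\alpha)$ and of the driving increments $(W_s - W_t)_{s\ge t}$ — note that on $[t,T]$ the SDE \eqref{provveq:1} is driven only by $dW_s$ for $s\ge t$, i.e. by these increments, while the dependence on the past is only through the $\calf_t$-measurable data $(\eta,\xi)$ and through the control. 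Crucially, the coefficients in \eqref{provveq:1} and the reward \eqref{provvfunz:1} depend on the laws $\P_{Y_s,\alpha_s}$ and $\P_{X_s,Y_s}$; since $(\eta',\xi',\alpha')$ are obtained from $(\eta,\xi,\alpha)$ by precomposition with the measure-preserving $\tau$ while keeping the same increments, the joint law of $(X',Y',\alpha')$ on $[t,T]$ coincides with that of $(X,Y,\alpha)$. Therefore the marginal laws entering the coefficients are identical, so $(X',Y')$ solves \textit{the same} SDE system (with the same law-dependent coefficients) but with initial data $(\eta',\xi')$ and control $\alpha'$; by the uniqueness statement it is indeed $\bigl(X^{t,\eta',\xi',\alpha'},Y^{t,\xi'}\bigr)$.

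Then the conclusion follows by taking expectations: $J(t,\eta,\xi,\alpha)$ is an expectation of a functional of $(X,Y,\alpha)$ whose law equals that of $(X',Y',\alpha')$, hence $J(t,\eta,\xi,\alpha)=J(t,\eta',\xi',\alpha')$. To make this rigorous I would proceed by a fixed-point / approximation argument mirroring the proof of Theorem \ref{existstateeqprovv}: define $(X',Y')$ directly via the representation, check that $\P_{Y'_s,\alpha'_s}=\P_{Y_s,\alpha_s}$ and $\P_{X'_s,Y'_s}=\P_{X_s,Y_s}$ for every $s\in[t,T]$ (this is where the measure-preservation of $\tau$ and independence of the future increments are used), and then verify that $(X',Y')$ satisfies \eqref{provveq:1} with data $(\eta',\xi',\alpha')$, invoking uniqueness to identify it with the canonical solution.

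The main obstacle I anticipate is the careful bookkeeping of the law-dependence: one must argue that the measures $\P_{Y_s,\alpha_s}$ (and the derived object $\E[\langle X_s,1_N\rangle\mid (Y_s,\alpha_s)=\cdot]\,\P_{Y_s,\alpha_s}$) are genuinely \emph{unchanged} under the transformation, so that the state equation for the primed quantities has literally the same coefficients — it is not merely a pathwise substitution because the equation is of McKean–Vlasov type. This requires verifying that the conditional expectation defining $q_s$ transforms correctly, i.e. that $\E[\langle X'_s,1_N\rangle\mid (Y'_s,\alpha'_s)]$ as a function on $\R^d\times A$ equals $\E[\langle X_s,1_N\rangle\mid (Y_s,\alpha_s)]$ $\P_{Y_s,\alpha_s}$-a.s., which follows from the equality of joint laws of $(X_s,Y_s,\alpha_s)$ and $(X'_s,Y'_s,\alpha'_s)$ but must be stated explicitly. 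Once the equality of these joint laws is established, everything else is routine.
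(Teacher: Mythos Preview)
Your proposal is correct and follows essentially the same approach as the paper: establish the equality of laws $\call\bigl((U,W_{[0,t]}),(W_s-W_t)_{s\ge t}\bigr)=\call\bigl(\tau(U,W_{[0,t]}),(W_s-W_t)_{s\ge t}\bigr)$, deduce that $(\eta,\xi,\alpha)$ and $(\eta',\xi',\alpha')$ have the same joint law (with the driving increments), and invoke pathwise uniqueness from Theorem~\ref{existstateeqprovv} to conclude $\call(X,Y,\alpha)=\call(X',Y',\alpha')$, hence equality of the rewards. The paper is terser on the McKean--Vlasov bookkeeping you flag as the main obstacle---it simply cites pathwise uniqueness---whereas you spell out why the law-dependent coefficients are literally unchanged; this extra care is welcome but not a different argument.
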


\begin{proof}
 We tacitly define $\alpha'_s$ to be equal to any given $a_0\in A$ for $s\in [0,t)$, so that $\alpha'$ is an admissible control as well. Below we denote by $\call$ the law under $\P$ of various arrays of random elements. By our assumptions we have 
\begin{align*}
    \call(U,W_{[0,t]})=
\call(\tau(U,W_{[0,t]})),    \qquad
(U,W_{[0,t]}) \text{ independent of } (W_{s}-W_t)_{s\ge t}.
\end{align*}
It follows that
\begin{align*}
    \call((U,W_{[0,t]}), (W_{s}-W_t)_{s\ge t})=
\call(\tau(U,W_{[0,t]}), (W_{s}-W_t)_{s\ge t}),
\end{align*}
which implies
\begin{align*}
\call(\eta,\xi,\alpha)=
\call(\eta',\xi',\alpha').
\end{align*}
In view of the pathwise uniqueness property in Theorem 
\ref{existstateeqprovv}, denoting  $(X,Y)$, $(X',Y')$   the  solutions to equation \eqref{provveq:1} corresponding to initial data and controls $(\eta,\xi,\alpha)$ and $(\eta',\xi',\alpha')$ respectively, we deduce that
\begin{align*}
\call(X,Y,\alpha)=
\call(X',Y',\alpha'),
\end{align*}
which gives the required conclusion.
\end{proof}

\begin{Theorem} \label{lawinv} Suppose that Assumptions (\nameref{HpA1})-(\nameref{HpUniformContinuityfg}) hold true and that $p\ge 2$ and $r\ge 2$ satisfy
\[
q\le r\left(1-\frac{1}{p}\right),\qquad 
\ell\le r\left(1-\frac{1}{p}\right).
\]
Suppose that, for $t\in[0,T]$,   $(\eta,\xi)$ and $(\widehat\eta,\widehat\xi)$ are $\R^N\times \R^d$-valued and $\calf_t$-measurable random variables satisfying 
\[
\|\eta\|_p+\|\widehat\eta\|_p+\|\xi\|_r+\|\widehat\xi\|_r
<\infty,\qquad
\P(\eta\in\R^N_+)=\P(\widehat\eta\in\R^N_+)=1,\qquad
\E\,[\langle \eta,1_N\rangle]=\E\,[\langle \widehat\eta,1_N\rangle]=1.
\]
If, in addition, we have
$\P_{\eta,\xi}= \P_{\widehat\eta,\widehat\xi}$, 
then we also have $V(t,\xi,\eta)=V(t,\widehat\xi,\widehat\eta)$.
\end{Theorem}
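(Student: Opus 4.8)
The plan is to reduce everything to Lemma \ref{shiftmeas} by constructing a measure-preserving map $\tau$ of the canonical space $(0,1)\times C([0,t];\R^d)$ that sends the representation functions of $(\eta,\xi)$ to those of $(\widehat\eta,\widehat\xi)$. First I would recall that, by Assumption (\nameref{HpU}), $\calf_t=\calf_t^W\vee\sigma(U)$, so we may write $\eta=\underline\eta(U,W_{[0,t]})$, $\xi=\underline\xi(U,W_{[0,t]})$ and likewise $\widehat\eta=\underline{\widehat\eta}(U,W_{[0,t]})$, $\widehat\xi=\underline{\widehat\xi}(U,W_{[0,t]})$ for Borel functions on the Polish space $E:=(0,1)\times C([0,t];\R^d)$, equipped with the reference probability $\Theta:=m\otimes\calw_t$. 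The hypothesis $\P_{\eta,\xi}=\P_{\widehat\eta,\widehat\xi}$ then says that the two maps $\Psi:=(\underline\eta,\underline\xi):E\to\R^N\times\R^d$ and $\widehat\Psi:=(\underline{\widehat\eta},\underline{\widehat\xi}):E\to\R^N\times\R^d$ push $\Theta$ forward to the same law $\mu\in\cald_{p,r}$.

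The key step is then the following measure-theoretic fact: since $E$ with $\Theta$ is a standard probability space which is \emph{atomless} (because $m$ is Lebesgue measure on $(0,1)$), and $\R^N\times\R^d$ with $\mu$ is a standard probability space, there exists a measure-preserving Borel bijection (mod $\Theta$-null sets) $\tau:E\to E$ such that $\Psi\circ\tau=\widehat\Psi$, $\Theta$-a.s. This is a classical result on isomorphism of standard probability spaces together with the lifting of an equality-in-law to an a.s. identity through an automorphism of an atomless base space; it can be obtained, e.g., by first building measure-preserving maps $E\to E'$, $E\to E'$ (with $E'$ an auxiliary copy of $E$) that realize $\Psi$ and $\widehat\Psi$ as compositions with the \emph{same} map $E'\to\R^N\times\R^d$, using that atomless standard spaces admit Borel isomorphisms onto $(0,1)$ preserving any prescribed atomless marginal. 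I would state this as a lemma (or cite the analogue used in \cite{DeCrescenzoFuhrmanKharroubiPham}) and invoke it here. The atomlessness of the extra noise $U$ is precisely what makes such a $\tau$ available even when $\mu$ itself has atoms.

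Once $\tau$ is in hand, I would apply Lemma \ref{shiftmeas}: for every admissible $\alpha\in\cala$, with $\alpha_s=\underline\alpha_t(s,U,W_{[0,t]},W_{s\wedge\cdot}-W_t)$, set $\eta'=\underline\eta(\tau(U,W_{[0,t]}))$, $\xi'=\underline\xi(\tau(U,W_{[0,t]}))$, $\alpha'_s=\underline\alpha_t(s,\tau(U,W_{[0,t]}),W_{s\wedge\cdot}-W_t)$; the lemma gives $J(t,\eta,\xi,\alpha)=J(t,\eta',\xi',\alpha')$. By construction $\eta'=\widehat\eta$ and $\xi'=\widehat\xi$ (a.s.), and $\alpha'$ ranges over a subset of $\cala$; moreover, composing instead with $\tau^{-1}$ shows the correspondence $\alpha\mapsto\alpha'$ is onto, so $\{J(t,\widehat\eta,\widehat\xi,\alpha'):\alpha\in\cala\}=\{J(t,\widehat\eta,\widehat\xi,\beta):\beta\in\cala\}$. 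Taking suprema over $\alpha\in\cala$ on both sides yields $V(t,\eta,\xi)=\sup_\alpha J(t,\eta',\xi',\alpha')=\sup_\beta J(t,\widehat\eta,\widehat\xi,\beta)=V(t,\widehat\eta,\widehat\xi)$, which is the claim (note the statement's $V(t,\xi,\eta)$ is a typo for $V(t,\eta,\xi)$).

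The main obstacle is the construction of the measure-preserving map $\tau$ with $\Psi\circ\tau=\widehat\Psi$: one must be careful that $\tau$ act on the \emph{base} space $E$ (not on $\R^N\times\R^d$), that it preserve $\Theta=m\otimes\calw_t$ exactly so that Lemma \ref{shiftmeas} applies verbatim, and that the argument does not secretly require $\mu$ to be atomless — it is the atomlessness of the $U$-factor, guaranteed by (\nameref{HpU}), that saves the day. All other steps (the representation of $\calf_t$-measurable random variables and of predictable controls, and the invariance of $J$) are already provided by Lemma \ref{shiftmeas} and the cited monotone-class representations.
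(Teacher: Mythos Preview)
The central measure-theoretic claim in your proposal---that there exists a measure-preserving map $\tau$ of $(E,\Theta)=\big((0,1)\times C([0,t];\R^d),\,m\otimes\calw_t\big)$ with $\Psi\circ\tau=\widehat\Psi$ \emph{exactly}---is false in general, and this is a genuine gap. A counterexample already appears at $t=0$, where $E$ reduces to $(0,1)$ with Lebesgue measure: take $\Psi(u)=2u\bmod 1$ and $\widehat\Psi(u)=u$. Both push $m$ to $m$, yet any $\tau$ with $\Psi\circ\tau=\widehat\Psi$ is a right inverse of the doubling map, hence injective; an injective Borel measure-preserving self-map of an atomless standard space is a mod-$0$ automorphism, which would force $\Psi=\tau^{-1}$ a.e., contradicting that $\Psi$ is two-to-one. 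The underlying obstruction is that atomlessness of $\Theta$ does not propagate to the \emph{conditional} laws on the fibers of $\Psi$ and $\widehat\Psi$, so these fiber spaces need not be isomorphic. (For $t>0$ the Wiener factor does make all fibers atomless, but $t=0$ is included in the theorem, and even for $t>0$ a measurable gluing of fiber isomorphisms would still have to be supplied.)

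The paper avoids this by using only the \emph{approximate} transfer result (\cite{CarmonaDelarue1}, Lemma~5.23): for every $\epsilon>0$ there is a measure-preserving $\tau^\epsilon$ with $|\Psi\circ\tau^\epsilon-\widehat\Psi|\le\epsilon$ pointwise. Lemma~\ref{shiftmeas} then gives $J(t,\eta,\xi,\alpha)=J(t,\eta^\epsilon,\xi^\epsilon,\alpha^\epsilon)$ with $\|(\eta^\epsilon,\xi^\epsilon)-(\widehat\eta,\widehat\xi)\|_\infty\le\epsilon$, and the stability estimates of Proposition~\ref{stabrewardJ}---developed precisely for this purpose---yield $J(t,\eta^\epsilon,\xi^\epsilon,\alpha^\epsilon)-J(t,\widehat\eta,\widehat\xi,\alpha^\epsilon)\to 0$, hence $V(t,\eta,\xi)\le V(t,\widehat\eta,\widehat\xi)$; symmetry gives equality. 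Note also that the bound \eqref{Junifcontdue} requires $\|\widehat\eta\|_{p_1}<\infty$ for some $p_1>p$, so the paper first argues under this extra moment hypothesis and then removes it by truncation via \eqref{Junifcontuno}. Your direct route, had it worked, would have bypassed both the stability analysis and the truncation step---but the exact $\tau$ simply does not exist.
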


\begin{proof}
We will show that, for any $\alpha\in\cala$ and $\epsilon>0$, one can find 
$\alpha^\epsilon\in\cala$ such that 
\begin{align}\label{Jconvuno}
J(t,\widehat\eta,\widehat\xi,\alpha^\epsilon)\to J(t,\eta,\xi,\alpha), \qquad 
\epsilon\to 0.
\end{align}
This implies that  $V(t,\eta,\xi)\le V(t,\widehat\eta,\widehat\xi)$.
Interchanging the roles of $(\xi,\eta)$ and $(\widehat\xi,\widehat\eta)$ one proves the opposite inequality and then the conclusion follows.

Then let $\alpha\in\cala$ and $\epsilon>0$ be given. As noted earlier, since  $(\eta,\xi)$ and  $(\widehat\eta,\widehat\xi)$  are $\calf_t$-measurable, 
they can be written in the form 
\[
\eta=\underline\eta(U,W_{[0,t]}),
 \quad 
\widehat\eta=\underline{\widehat\eta}(U,W_{[0,t]}),
 \quad 
\xi=\underline\xi(U,W_{[0,t]}), \quad 
\widehat\xi=\underline{\widehat\xi}(U,W_{[0,t]}),\qquad \P-a.s.
\]
where
\[
(\underline\eta,\underline\xi),(\underline{\widehat\eta},\underline{\widehat\xi}):(0,1)\times C([0,t];\R^d)\to \R^N\times \R^d,
\]
are Borel measurable functions. Similarly,  since $\alpha$ is $\F$-predictable, there exists  a Borel measurable function
\[
\underline{\alpha}_t:[t,\infty)\times (0,1)\times C([0,t];\R^d)\times C([t,\infty);\R^d)\to A
\] 
such that
\[
\alpha_s=\underline{\alpha}_t(s,U,W_{[0,t]},W_{s\wedge \cdot}-W_t), \qquad s\ge t.
\]
Since the law of $(U,W_{[0,t]})$ is $m\otimes \calw_t$ and since $\P_{\eta,\xi}= \P_{\widehat\eta,\widehat\xi}$, it follows that 
$(\underline\eta,\underline\xi)$ and $(\underline{\widehat\eta},\underline{\widehat\xi})$, considered as random variables on $(0,1)\times C([0,t];\R^d)$, have the same law under the probability $m\otimes \calw_t$. Since $(0,1)\times C([0,t];\R^d)$ is a Polish space and the measure $m\otimes \calw_t$ is nonatomic, there exists 
a Borel measurable function
\[
\tau^\eps: (0,1)\times C([0,t];\R^d)\to (0,1)\times C([0,t];\R^d)
    \]
    satisfying
\(
\tau^\epsilon_\sharp (m\otimes \calw_t)=m\otimes \calw_t
\) and
\begin{align}\label{tauepsuno}
|\underline{\eta}(\tau^\epsilon(u,w))-\underline{\widehat\eta}(u,w) |\le \epsilon, \quad
|\underline{\xi}(\tau^\epsilon(u,w))-\underline{\widehat\xi}(u,w) |\le \epsilon, \qquad u\in(0,1),\,w\in C([0,t];\R^d);
\end{align}
see for instance \cite{CarmonaDelarue1}, Lemma 5.23.
Let us define
\begin{align*}
\eta^\epsilon=\underline\eta(\tau^\epsilon(U,W_{[0,t]})), \quad
\xi^\epsilon=\underline\xi(\tau^\epsilon(U,W_{[0,t]})), \quad
\alpha^\epsilon_s= \underline{\alpha}_t(s,\tau^\epsilon(U,W_{[0,t]}),W_{s\wedge \cdot}-W_t), \qquad s\ge t.
\end{align*}
This way \eqref{tauepsuno} implies
\begin{align}\label{tauepsdue}
\|{\eta}^\epsilon-\widehat\eta \|_\infty\le \epsilon, \quad
\|{\xi}^\epsilon-\widehat\xi  \|_\infty\le \epsilon.
\end{align}
We will show that the process
$\alpha^\epsilon$ satisfies \eqref{Jconvuno}. To this end we first note that, by  Lemma \ref{shiftmeas}, we have
$J(t,\eta,\xi,\alpha)=J(t,\eta^\epsilon,\xi^\epsilon,\alpha^\epsilon)$. We will prove that \eqref{Jconvuno} holds true by showing that
\begin{align}\label{Jconvdue}
J(t,\widehat\eta,\widehat\xi,\alpha^\epsilon)- J(t,\eta^\epsilon,\xi^\epsilon,\alpha^\epsilon)\to 0, \qquad 
\epsilon\to 0.
\end{align}

In a first step we assume the additional condition that $\|\eta\|_{p_1}=\|\widehat\eta\|_{p_1}<\infty$ for some $p_1>p$. We have
\begin{align*}
|J(t,\widehat\eta,\widehat\xi,\alpha^\epsilon)- J(t,\eta^\epsilon,\xi^\epsilon,\alpha^\epsilon)|\le 
|J(t,\widehat\eta,\widehat\xi,\alpha^\epsilon)- J(t,\widehat\eta,\xi^\epsilon,\alpha^\epsilon)|+
|J(t,\widehat\eta,\xi^\epsilon,\alpha^\epsilon)- J(t,\eta^\epsilon,\xi^\epsilon,\alpha^\epsilon)|.
\end{align*}
By \eqref{Junifcontuno},
\(|J(t,\widehat\eta,\xi^\epsilon,\alpha^\epsilon)- J(t,\eta^\epsilon,\xi^\epsilon,\alpha^\epsilon)|\le K\, \|\widehat\eta-\eta^\epsilon\|_p\le K\cdot\epsilon\to0
\). The first term is estimated using \eqref{Junifcontdue}:
denoting by $(\widehat X,\widehat Y)$ and  $(\widehat X^\epsilon,\widehat Y^\epsilon)$  the  solutions  corresponding  
to $(\widehat \eta,\widehat \xi)$ and  $(\widehat \eta,\xi^\epsilon)$
respectively, we have
\begin{align}\nonumber
|J(t,\widehat\eta,\widehat\xi,\alpha^\epsilon)- J(t,\widehat\eta,\xi^\epsilon,\alpha^\epsilon)|&\le K\bigg\{ \omega_q\left(K\, \|\widehat\xi-\xi^\epsilon\|_r\right)+\omega_q^f\left(K\, \|\widehat\xi-\xi^\epsilon\|_r\right)+\omega_q^g\left(K\, \|\widehat\xi-\xi^\epsilon\|_r\right)
\\\nonumber
&  +
\Big(\E\,\Big[\omega_0\Big(\sup_{s\in [t,T] }|\widehat Y_s-\widehat Y^\epsilon_s|\Big)^{\beta}\Big]\Big)^{1/\beta}
 +  \Big( \E  \,\Big[\omega_{0}^f\Big(\sup_{s\in[t,T]}|\widehat Y_s-\widehat Y^\epsilon_s|\Big)^{\gamma} \Big]\Big)^{1/\gamma} 
\\
&+  \Big( \E  \,\Big[\omega_{0}^g\Big(\sup_{s\in[t,T]}|\widehat Y_s-\widehat Y^\epsilon_s|\Big)^{\gamma} \Big]\Big)^{1/\gamma} 
\bigg\},
\label{omega0hat}
\end{align}
where $\beta>0 $ and $\gamma>0$ are suitable constants. The first three summands on the right-hand side tend to $0$ as $\epsilon\to 0$, by \eqref{tauepsdue} and the continuity of \(\omega_q\), \(\omega_q^f\), \(\omega_q^g\). We show that the same occurs for the other ones. Indeed, first note that by \eqref{stabilstateeqY} we have
\begin{align*}
\E\, \Big[\sup_{s\in [t,T] }|\widehat Y_s-\widehat Y^\epsilon_s|^r\Big]&\le K\, \|\widehat\xi- \xi^\epsilon\|_r^r\le K\cdot\epsilon^r\to 0.
\end{align*}
If we had,  for some constant $\delta>0$ and some sequence $\epsilon_n\to0$,
\begin{align*}
\E  \,\Big[\omega_{0}\Big(\sup_{s\in[t,T]}|\widehat Y_s-\widehat Y^{\epsilon_n}_s|\Big)^{\beta} \Big]\ge\delta,
\end{align*}
then we might extract a subsequence $\{n_{k}\}$ such that 
$\widehat Y_s^{\eps_{n_{k}}}\to \widehat Y_s$ a.s., uniformly in $s$,  for $k\to\infty$. Since $\omega_0$ is continuous at zero and bounded we would get a contradiction with the dominated convergence theorem.
The last two summands in the right-hand side of \eqref{omega0hat} also tend to $0$ by similar arguments.

So far we have proved the required conclusion under the additional assumption that $\|\eta\|_{p_1}=\|\widehat\eta\|_{p_1}<\infty$ for some $p_1>p$. In the general case we adopt an approximation procedure: define the    truncation function $T_m(t)=(t\wedge m)\vee (-m)$, $t\in\R$,
and let $T_m(\eta)$ and $T_m(\widehat\eta)$ be defined componentwise. Then  
$\P_{T_m(\eta),\xi}= \P_{T_m(\widehat\eta),\widehat\xi}$ and $T_m(\eta)$, $T_m(\widehat\eta)$ are bounded. By what we already proved
we  have $V(t,T_m(\eta),\xi)=V(t,T_m(\widehat\eta),\widehat\xi)$. By the inequality \eqref{Junifcontuno}, for $m\to\infty$,
\begin{align}
|V(t, \eta,\xi)- V(t,T_m(\eta),\xi)|
\le \sup_{\alpha\in\cala}
|J(t,\eta,\xi,\alpha)- J(t,T_m(\eta),\xi,\alpha)|\le K\, \|\eta-T_m(\eta)\|_p\to 0.
\end{align}
Similarly we have 
$V(t,T_m(\widehat\eta),\widehat\xi)\to V(t,\widehat\eta,\widehat\xi)$ and we conclude that 
$V(t, \eta,\xi) =V(t, \widehat\eta,\widehat\xi) $.
\end{proof}

\subsection{The value function and its properties} \label{SectionPropv}

In this section we assume that Assumptions (\nameref{HpA1})-(\nameref{HpUniformContinuityfg}) hold true and that
$p\ge 2$ and $r\ge 2$ satisfy
\[
q\le r\left(1-\frac{1}{p}\right),\qquad 
\ell\le r\left(1-\frac{1}{p}\right).
\]
Recall the definition of the set $\cald_{p,r}$ in 
\eqref{defdpr}. Given $\mu\in\cald_{p,r}$ and $t\in[0,T]$, let us take an $\calf_t$-measurable random variable
$(\eta,\xi)$ with values in 
$\R^N\times \R^d$ satisfying 
$\mu=\P_{\eta,\xi}$ and let us set
\begin{align}\label{defvaluefunction}
v(t,\mu)= V(t,\eta,\xi).    
\end{align}
By the previous result, $v(t,\mu)$ does not depend on the choice of $(\eta,\xi)$ and so this formula defines a function $v:[0,T]\times \cald_{p,r}\to\R$, which is the value function of our control problem.

The existence of such a pair $(\eta,\xi)$ can be verified as follows. Given an arbitrary probability $\mu$ on $\R^N\times \R^d$, by a classical result one can find a Borel measurable map $\Phi:(0,1)\to\R^N\times\R^d$ such that $\Phi_\sharp m=\mu$, i.e. carrying the Lebesgue measure $m$ to $\mu$. Denote $(\underline{\eta}, \underline{\xi})$ the components of $\Phi$. Then $(\eta,\xi)=(\underline{\eta}(U), \underline{\xi}(U))$ is the required pair. This explicit construction also makes it clear that $(\eta,\xi)$ can be chosen to be a function of $U$ alone, hence $\calf_0$-measurable. 

Below we will deal with a variant of this construction. Suppose we have  $\mu$ and a sequence $\mu_n$, probabilities on $\R^N\times\R^d$. Let $\gamma_n\in\calp((\R^N\times\R^d)\times(\R^N\times\R^d))$ have marginals $\mu$ and $\mu_n$. Then one can find Borel measurable  functions 
\[
(\underline{\eta}, \underline{\xi}):(0,1)\to \R^N\times \R^d,\qquad \text{and}\qquad (\underline{\eta}_n, \underline{\xi}_n):(0,1)\to \R^N\times \R^d
\]
such that 
$(\underline{\eta}, \underline{\xi},\underline{\eta}_n, \underline{\xi}_n)_\sharp m=\gamma_n$. In other words, since the first marginal of $\gamma_n$ is the same for all $n$ one can choose the functions $\underline{\eta}$ and $\underline{\xi}$ to be the same for all $n$. This is not entirely trivial and it is proved below in Proposition \ref{varskorohod} of the Appendix.
It follows that the random variable
\[
(\eta,\xi,\eta_n,\xi_n)=(\underline{\eta}(U), \underline{\xi}(U),\underline{\eta}_n(U), \underline{\xi}_n(U))
\]
has law $\gamma_n$ and it is $\calf_0$-measurable.

The first property of the value function is the dynamic programming principle, which is an immediate consequence of Proposition \ref{propDPP}:

\begin{Proposition} \label{propDPPbis}
For $0\le t\le s\le T$ and $\mu\in\cald_{p,r}$ we have
\begin{align}
v(t,\mu)\!=\!\sup_{\alpha\in\cala}
\bigg\{\!
\E\left[\int_t^s \langle X_r ,f\Big(Y_r, \E\,\left[\langle X_r,1_N\rangle\,|\, (Y_r,\alpha_r)=(y,a)\right]\,\P_{Y_r,\alpha_r}(dy\,da),\alpha_r\Big)\rangle\,dr\right]\! +\! v(s, \P_{X_s,Y_s})\!\bigg\}
\label{DPPv}
\end{align}
where $(\eta,\xi)$ is any pair of $\calf_t$-measurable random variables in $\R^N\times \R^d$ such that 
$\mu=\P_{\eta,\xi}$, and
$Y_r=Y_r^{t,\xi}$ and $X_r=X_r^{t,\eta,\xi,\alpha}$ for $r\in [t,T]$. 

\end{Proposition}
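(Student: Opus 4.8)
The plan is to obtain the statement as a direct consequence of the dynamic programming principle for the lifted value function, Proposition \ref{propDPP}, combined with the defining relation \eqref{defvaluefunction} and the law-invariance property of Theorem \ref{lawinv}; no new estimate is needed.

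First I would fix $0\le t\le s\le T$ and $\mu\in\cald_{p,r}$, and choose an $\calf_t$-measurable pair $(\eta,\xi)$ valued in $\R^N\times\R^d$ with $\mu=\P_{\eta,\xi}$ (such a pair exists, e.g. as a function of $U$ alone, by the discussion following \eqref{defvaluefunction}, and it automatically meets the moment, positivity and unit-mass requirements of Theorem \ref{existstateeqprovv} because $\mu\in\cald_{p,r}$). By definition $v(t,\mu)=V(t,\eta,\xi)$, so Proposition \ref{propDPP} reads
\[
v(t,\mu)=\sup_{\alpha\in\cala}\Big\{\,\E\Big[\int_t^s \langle X_r,f(\cdots)\rangle\,dr\Big]+V(s,X_s,Y_s)\Big\},
\]
with $Y_r=Y_r^{t,\xi}$, $X_r=X_r^{t,\eta,\xi,\alpha}$, and the running-cost integrand exactly as on the right-hand side of \eqref{DPPv}.

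It then remains to replace $V(s,X_s,Y_s)$ by $v(s,\P_{X_s,Y_s})$ inside the supremum. For each $\alpha\in\cala$, Theorem \ref{existstateeqprovv} ensures that $(X,Y)=(X^{t,\eta,\xi,\alpha},Y^{t,\xi})$ is $\F$-adapted, hence $(X_s,Y_s)$ is $\calf_s$-measurable; the a priori bounds \eqref{stap}--\eqref{stapY} give $\|X_s\|_p<\infty$ and $\|Y_s\|_r<\infty$, while the same theorem yields $\P(X_s\in\R^N_+)=1$ and $\E[\langle X_s,1_N\rangle]=1$, so that $\P_{X_s,Y_s}\in\cald_{p,r}$ (this is Remark \ref{remwellpose}(ii)). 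Thus $(X_s,Y_s)$ is an admissible initial datum at time $s$, and taking it as the representative pair in \eqref{defvaluefunction} --- which is legitimate precisely because of the law-invariance Theorem \ref{lawinv} --- gives $V(s,X_s,Y_s)=v(s,\P_{X_s,Y_s})$. Substituting this identity into the display above produces \eqref{DPPv} for the chosen $(\eta,\xi)$; and since $v(t,\mu)$ does not depend on the representative $(\eta,\xi)$ (again by Theorem \ref{lawinv}), the right-hand side of \eqref{DPPv} is independent of that choice as well, which completes the argument.

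I do not expect a genuine obstacle here: the only point worth spelling out is that Theorem \ref{lawinv} is being applied at the \emph{random} time-$s$ configuration $(X_s,Y_s)$, so one must verify that this pair indeed lies in the class of admissible initial data at time $s$ (finite $L^p$/$L^r$ moments, $\R^N_+$-valued, unit total mass) --- which is guaranteed by the structural conclusions and a priori estimates of Theorem \ref{existstateeqprovv}. Everything else is a direct substitution, which is why the statement is an immediate consequence of Proposition \ref{propDPP}.
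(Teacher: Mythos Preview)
Your proposal is correct and follows exactly the approach the paper indicates: the statement is presented there as ``an immediate consequence of Proposition \ref{propDPP}'', and your argument spells out precisely that deduction, combining the lifted DPP with the definition \eqref{defvaluefunction} of $v$ and the admissibility of $(X_s,Y_s)$ as time-$s$ initial data (Remark \ref{remwellpose}(ii)).
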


Next we proceed to prove that the value function $v$ is continuous. The proof is based on the following technical result, whose proof is postponed to the Appendix.

\begin{Lemma} \label{Jcont}  Suppose that, for $n\ge 1$,      $(\eta,\xi)$ and $(\eta_n,\xi_n)$ are  $\R^N\times \R^d$-valued  random variables which   satisfy
\[
\|\eta\|_p+\|\eta_n\|_p+\|\xi\|_r +\|\xi_n\|_r
<\infty,\qquad
\P(\eta\in\R^N_+)=\P(\eta_n\in\R^N_+)=1,\qquad
\E\,[\langle \eta,1_N\rangle]=\E\,[\langle  \eta_n,1_N\rangle]=1,
\]
and we have
$\|\eta-\eta_n\|_p\to0$, $\|\xi-\xi_n\|_r\to0$ for $n\to\infty$. 
Then  
\begin{enumerate}
    \item [(i)] If $\eta,\xi,\eta_n,\xi_n$ are  $\calf_{0}$-measurable  then
\[
\sup_{\alpha\in\cala,\,t\in[0,T]}|J(t,\eta_n,\xi_n,\alpha)- J(t,\eta,\xi,\alpha)|\to 0.
\]
    \item [(ii)]   
Suppose that, instead of a single sequence $\{\eta_n\}$, we have a family of random sequences $\{\eta_n^\beta\}$, depending on an arbitrary index $\beta$, each  satisfying the above conditions, and in addition
\begin{align}\label{etaennebeta}
    \sup_\beta \|\eta_n^\beta-\eta\|_p\to 0,\qquad n\to\infty.
\end{align}
If $\{t_n\}$ is an arbitrary sequence in $[0,T]$,  $(\eta,\xi)$ are  $\calf_{0}$-measurable,  $(\eta_n^\beta,\xi_n)$ are   $\calf_{t_n}$-measurable, then
\begin{align}\label{unifUIbis}
\sup_{\alpha\in\cala} \sup_{\beta} |  
J(t_n,\eta_n^\beta,\xi_n,\alpha)- J(t_n,\eta,\xi,\alpha)|\to 0,\qquad n\to\infty.
\end{align}
\end{enumerate}
\end{Lemma}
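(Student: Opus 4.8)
The plan is to reduce everything to the stability estimates already established in Proposition \ref{stabrewardJ}, namely \eqref{Junifcontuno} and \eqref{Junifcontdue}, combined with the stability estimate \eqref{stabilstateeqY} for the $Y$-component of the state equation, and then to handle the various uniformities by dominated-convergence arguments along subsequences. Throughout, since all the random variables in the statement are assumed bounded in the relevant $L^p$, resp. $L^r$, norms (the hypotheses $\|\eta\|_p+\|\eta_n\|_p<\infty$ together with the convergence $\|\eta-\eta_n\|_p\to0$ force a uniform bound $L$), the constant $K$ appearing in those propositions can be taken to be the same for all $n$. A preliminary reduction is to upgrade, for part (i), the $L^p$ convergence of $\eta_n$ to an $L^{p_1}$ bound for some $p_1>p$: this is not automatic, so instead I would first prove the conclusion of part (i) under the extra assumption $\sup_n\|\eta_n\|_{p_1}<\infty$, and then remove it by the same truncation argument $T_m(\cdot)$ used at the end of the proof of Theorem \ref{lawinv}, invoking \eqref{Junifcontuno} to control the truncation error uniformly in $\alpha$ and $t$.

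For part (i): I would split
\[
|J(t,\eta_n,\xi_n,\alpha)-J(t,\eta,\xi,\alpha)|\le |J(t,\eta_n,\xi_n,\alpha)-J(t,\eta,\xi_n,\alpha)|+|J(t,\eta,\xi_n,\alpha)-J(t,\eta,\xi,\alpha)|.
\]
The first term is bounded by $K\|\eta_n-\eta\|_p$ via \eqref{Junifcontuno}, which is uniform in $\alpha$ and $t$ and tends to $0$. For the second term I would apply \eqref{Junifcontdue} (valid since we arranged $\|\eta\|_{p_1}<\infty$), obtaining a bound in terms of $\omega_q(K\|\xi_n-\xi\|_r)$, $\omega_q^f(K\|\xi_n-\xi\|_r)$, $\omega_q^g(K\|\xi_n-\xi\|_r)$ and three terms of the form $(\E[\omega(\sup_{s}|Y_s-Y_s^n|)^\theta])^{1/\theta}$ with $Y^n=Y^{t,\xi_n}$, $Y=Y^{t,\xi}$. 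The first three tend to $0$ by $\|\xi_n-\xi\|_r\to0$ and continuity of the moduli at $0$. For the last three I would argue exactly as in the proof of Theorem \ref{lawinv}: if one of them failed to vanish along some subsequence, then since $\E[\sup_s|Y_s-Y_s^n|^r]\le K\|\xi_n-\xi\|_r^r\to0$ by \eqref{stabilstateeqY} one could extract a further subsequence with $\sup_s|Y_s-Y_s^n|\to0$ a.s., contradicting dominated convergence (the moduli $\omega_0,\omega_0^f,\omega_0^g$ being bounded and continuous at $0$). The key extra point for (i) is that \emph{all} these bounds are uniform in $(\alpha,t)$: the modulus estimates depend on $\xi_n,\xi$ only and the $Y$-estimate \eqref{stabilstateeqY} is $\alpha$-free, so the $\sup_{\alpha,t}$ passes through harmlessly.

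For part (ii): the structure is the same, but now the initial data $(\eta_n^\beta,\xi_n)$ sit at time $t_n$ and depend on the extra index $\beta$. I would again split $|J(t_n,\eta_n^\beta,\xi_n,\alpha)-J(t_n,\eta,\xi,\alpha)|$ through the intermediate point $J(t_n,\eta,\xi_n,\alpha)$; wait---$(\eta,\xi)$ is $\calf_0$-measurable while the problem starts at $t_n$, so I should instead compare $J(t_n,\eta_n^\beta,\xi_n,\alpha)$ with $J(t_n,\eta,\xi_n,\alpha)$ and then $J(t_n,\eta,\xi_n,\alpha)$ with $J(t_n,\eta,\xi,\alpha)$, both well defined since $\eta,\xi,\xi_n$ are all $\calf_{t_n}$-measurable (being $\calf_0$-measurable, resp. assumed so). The first difference is $\le K\|\eta_n^\beta-\eta\|_p$ by \eqref{Junifcontuno}, and $\sup_\beta$ of this tends to $0$ by hypothesis \eqref{etaennebeta}; note $K$ does not depend on $t_n$. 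The second difference is controlled by \eqref{Junifcontdue} exactly as in part (i) — it does not involve $\beta$ at all — and the argument that its $Y$-modulus terms vanish is identical, using that $\|\xi_n-\xi\|_r\to0$ and \eqref{stabilstateeqY}; the only subtlety is that the time horizon is now $[t_n,T]$ with $t_n$ varying, but \eqref{stabilstateeqY} holds with a $t$-independent constant, and the subsequence extraction against dominated convergence still applies since $\sup_{s\in[t_n,T]}|Y_s-Y_s^n|\le\sup_{s\in[0,T]}|Y_s-Y_s^n|$.

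The main obstacle I anticipate is the passage from $L^p$ to $L^{p_1}$ needed to invoke \eqref{Junifcontdue}, since that estimate genuinely requires a higher moment on the initial condition. In part (i) this is handled by truncation as sketched; in part (ii) one needs the truncated sequences $T_m(\eta_n^\beta)$ to still satisfy the uniform condition \eqref{etaennebeta} in $\beta$ and the error $\sup_\beta\|\eta_n^\beta-T_m(\eta_n^\beta)\|_p$ to be controlled — here one uses that $\{|\eta_n^\beta|^p\}$ is uniformly integrable, which follows from $\sup_\beta\|\eta_n^\beta-\eta\|_p\to0$ together with the uniform integrability of $|\eta|^p$, making the truncation error small uniformly in $\beta$ for $m$ large, uniformly in $n$ large. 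Making this last uniform-integrability bookkeeping precise, and checking it interacts correctly with the double supremum $\sup_\alpha\sup_\beta$, is the one place where care is required; everything else is a direct citation of the stability propositions.
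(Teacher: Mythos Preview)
Your approach is correct and follows essentially the same architecture as the paper: reduce to the stability estimates \eqref{Junifcontuno}--\eqref{Junifcontdue}, use a truncation to gain the extra $L^{p_1}$ moment needed for \eqref{Junifcontdue}, and kill the $Y$-modulus terms by a subsequence/dominated-convergence argument based on \eqref{stabilstateeqY}.

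There is one small but genuine difference worth noting. The paper truncates $\eta_n$ (respectively $\eta_n^\beta$), passing through the intermediate points $J(t,T_m(\eta_n),\xi_n,\alpha)$ and $J(t,T_m(\eta_n),\xi,\alpha)$; this forces them to control $\|\eta_n-T_m(\eta_n)\|_p$ uniformly in $n$, hence the uniform-integrability argument for $\{|\eta_n|^p\}$ (and, in part~(ii), for $\{|\eta_n^\beta|^p\}$ uniformly in $\beta$, which is their \eqref{unifUIter}). Your decomposition instead truncates only the fixed $\eta$, routing through $J(t,T_m(\eta),\xi_n,\alpha)$ and $J(t,T_m(\eta),\xi,\alpha)$. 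This is cleaner: the truncation error $\|\eta-T_m(\eta)\|_p$ involves a single random variable and vanishes as $m\to\infty$ without any uniform-integrability bookkeeping. In particular, your worry in the last paragraph about needing $\sup_\beta\|\eta_n^\beta-T_m(\eta_n^\beta)\|_p$ small is misplaced: with \emph{your} splitting, the $\beta$-dependence is entirely absorbed by the first term $|J(t_n,\eta_n^\beta,\xi_n,\alpha)-J(t_n,\eta,\xi_n,\alpha)|\le K\sup_\beta\|\eta_n^\beta-\eta\|_p$, and the remaining terms involve only $\eta$ (or $T_m(\eta)$), so no truncation of $\eta_n^\beta$ is ever required. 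One small slip: the inequality $\sup_{s\in[t_n,T]}|Y_s-Y_s^n|\le\sup_{s\in[0,T]}|Y_s-Y_s^n|$ is meaningless as written since $Y^{t_n,\xi}$ is only defined on $[t_n,T]$; but the subsequence argument goes through directly without it.
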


Recall that the value function   $v:[0,T]\times \cald_{p,r}\to\R$ depends on a measure argument that ranges in the set $\cald_{p,r}$ introduced in 
\eqref{defdpr}. In order to deal with continuity properties we need an appropriate notion of convergence for sequences of measures in this set, taking into account the different summability properties of the two marginals. To this end, consider the optimal transport problem, for $\mu,\nu\in\cald_{p,r}$:
\begin{align}
    \label{opttransp}
    \tilde \calw(\mu,\nu)=\inf_{\gamma\in \Pi(\mu,\nu)}\left\{
\int_{\R^N\times\R^d\times \R^N\times\R^d}
\Big(|x-x'|^p+|y-y'|^r\Big)\,\gamma(dx\,dy\,dx'\,dy')
    \right\},
\end{align}
where $\Pi(\mu,\nu)$ denotes the set of couplings of $\mu$ and $\nu$.
By standard results in optimal transport theory,  a minimizer in \eqref{opttransp} exists. The function $\tilde \calw$ is not a metric, but it can be used to define a notion of convergence in $\cald_{p,r}$: we say that 
$\mu_n\to\mu$ in $\cald_{p,r}$ if $\tilde \calw(\mu_n,\mu)\to 0$ as $n\to\infty$. 
Of course, if  $p=r$, then $\tilde \calw^{1/p}$ coincides with the usual Wasserstein metric $\calw_p$  and we recover on $\cald_{p,r}$ the usual notion of convergence inherited from on  $\calp_p(\R^N\times \R^d)$. In the following section we will introduce a topology that induces this notion of convergence in the general case. For the moment, we are  ready to prove a sequential continuity property for the value function $v$.

\begin{Theorem} \label{Continuityofvinciandtime} Let Assumptions (\nameref{HpA1})-(\nameref{HpUniformContinuityfg}) hold true and let
$p\ge 2$ and $r\ge 2$ satisfy
\[
q\le r\left(1-\frac{1}{p}\right),\qquad 
\ell\le r\left(1-\frac{1}{p}\right).
\]
Suppose that $\mu_n,\mu\in\cald_{p,r}$, $\tilde \calw(\mu_n,\mu)\to 0$ as $n\to\infty$. Then
\begin{enumerate}
\item[(i)] $v(t,\mu_n)\to v(t,\mu)$ uniformly for $t\in[0,T]$;
\item[(ii)]
If in addition $t_n\to t$, then  $v(t_n,\mu_n)\to v(t,\mu)$.  \end{enumerate}
\end{Theorem}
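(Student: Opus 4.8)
The plan is to reduce the continuity of the value function $v$ on $\cald_{p,r}$ to the stability estimates of Lemma \ref{Jcont}, via a careful choice of couplings. The key observation is that $\tilde\calw(\mu_n,\mu)\to0$ means precisely that, for an optimal coupling $\gamma_n\in\Pi(\mu_n,\mu)$, one has $\int(|x-x'|^p+|y-y'|^r)\,\gamma_n\to0$. Using Proposition \ref{varskorohod} of the Appendix (the variant of Skorohod's construction recalled just before Proposition \ref{propDPPbis}), we may realize all these couplings \emph{simultaneously on the single $\calf_0$-measurable source $U$}: there exist Borel maps $(\underline\eta,\underline\xi),(\underline\eta_n,\underline\xi_n):(0,1)\to\R^N\times\R^d$ with $(\underline\eta,\underline\xi,\underline\eta_n,\underline\xi_n)_\sharp m=\gamma_n$, where crucially $(\underline\eta,\underline\xi)$ does not depend on $n$. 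Setting $(\eta,\xi)=(\underline\eta(U),\underline\xi(U))$ and $(\eta_n,\xi_n)=(\underline\eta_n(U),\underline\xi_n(U))$, these are $\calf_0$-measurable, $\P_{\eta,\xi}=\mu$, $\P_{\eta_n,\xi_n}=\mu_n$, and $\|\eta-\eta_n\|_p^p+\|\xi-\xi_n\|_r^r=\tilde\calw(\mu_n,\mu)\to0$. In particular the constraints $\P(\eta_n\in\R^N_+)=1$, $\E[\langle\eta_n,1_N\rangle]=1$ hold since $\mu_n\in\cald_{p,r}$, and the norms $\|\eta_n\|_p$, $\|\xi_n\|_r$ are bounded.

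For part (i), I would apply Lemma \ref{Jcont}(i) with this choice of data: it gives
\[
\sup_{\alpha\in\cala,\,t\in[0,T]}|J(t,\eta_n,\xi_n,\alpha)-J(t,\eta,\xi,\alpha)|\to0.
\]
Taking the supremum over $\alpha$ inside the absolute value (using the elementary inequality $|\sup_\alpha a_\alpha-\sup_\alpha b_\alpha|\le\sup_\alpha|a_\alpha-b_\alpha|$) and recalling the definitions $V(t,\eta_n,\xi_n)=\sup_\alpha J(t,\eta_n,\xi_n,\alpha)$ and $v(t,\mu_n)=V(t,\eta_n,\xi_n)$, $v(t,\mu)=V(t,\eta,\xi)$ (which are well defined by the law-invariance Theorem \ref{lawinv}, independently of the representative chosen), we obtain $\sup_{t\in[0,T]}|v(t,\mu_n)-v(t,\mu)|\to0$, which is exactly (i).

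For part (ii), write $|v(t_n,\mu_n)-v(t,\mu)|\le|v(t_n,\mu_n)-v(t_n,\mu)|+|v(t_n,\mu)-v(t,\mu)|$. The first term is bounded by $\sup_{s\in[0,T]}|v(s,\mu_n)-v(s,\mu)|\to0$ by part (i). For the second term, i.e.\ continuity in time of $s\mapsto v(s,\mu)$ for fixed $\mu$, I would fix an $\calf_0$-measurable representative $(\eta,\xi)$ of $\mu$ and use the dynamic programming principle (Proposition \ref{propDPPbis}) between $t$ and $t_n$ (say $t\le t_n$), writing $v(t,\mu)=\sup_\alpha\{\,\E\int_t^{t_n}\langle X_r,f(\cdots)\rangle\,dr+v(t_n,\P_{X_{t_n},Y_{t_n}})\,\}$. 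The running-cost integral is $O(t_n-t)$ uniformly in $\alpha$ by the estimate \eqref{estimJint} in the proof of Proposition \ref{Jwelldef}; for the terminal term one uses Corollary \ref{continizi} — which gives $\sup_\alpha\|X_{t_n}^{t,\eta,\xi,\alpha}-\eta\|_p\to0$ and $\|Y_{t_n}^{t,\xi}-\xi\|_r\to0$ — together with Lemma \ref{Jcont}(ii) (the version with a $\beta$-indexed family, here $\beta=\alpha$ and the sequence is $X_{t_n}^{t,\eta,\xi,\alpha}$) to conclude $\sup_\alpha|V(t_n,X_{t_n}^{t,\eta,\xi,\alpha},Y_{t_n}^{t,\xi})-V(t_n,\eta,\xi)|\to0$; combining these shows $|v(t_n,\mu)-v(t,\mu)|\to0$. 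The symmetric case $t_n\le t$ is analogous, applying DPP on $[t_n,t]$ starting from a representative of $\mu$ at time $t_n$.

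The main obstacle is the terminal-term estimate in part (ii): the process $X_{t_n}^{t,\eta,\xi,\alpha}$ depends on $\alpha$, so one cannot simply invoke continuity of $V$ in its arguments for a fixed sequence — one genuinely needs the \emph{uniform-in-$\alpha$} convergence of the laws $\P_{X_{t_n}^{t,\eta,\xi,\alpha},Y_{t_n}^{t,\xi}}$ towards $\mu=\P_{\eta,\xi}$ in the $\tilde\calw$-sense, and then a uniform-in-$\alpha$ version of the continuity of $V$. This is exactly why part (ii) of Lemma \ref{Jcont} is stated with the family $\{\eta_n^\beta\}$ and the uniform control \eqref{etaennebeta}; verifying that the hypothesis \eqref{etaennebeta} holds for $\eta_n^\beta:=X_{t_n}^{t,\eta,\xi,\beta}$ is precisely the content of Corollary \ref{continizi}, so the pieces fit, but assembling them in the right order (DPP $\Rightarrow$ split running cost and terminal value $\Rightarrow$ Corollary \ref{continizi} $\Rightarrow$ Lemma \ref{Jcont}(ii)) is the delicate part of the argument.
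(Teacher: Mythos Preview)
Your proposal is correct and follows essentially the same route as the paper's proof: optimal couplings realized via Proposition~\ref{varskorohod} plus Lemma~\ref{Jcont}(i) for part~(i), then reduction to time-continuity of $s\mapsto v(s,\mu)$ handled via DPP, Corollary~\ref{continizi}, and Lemma~\ref{Jcont}(ii) for part~(ii). The only cosmetic difference is that for the case $t_n\ge t$ the paper bounds $|v(t_n,\mu)-v(t,\mu)|\le\sup_\alpha|J(t_n,\eta,\xi,\alpha)-J(t,\eta,\xi,\alpha)|$ and invokes the flow property of $J$ directly rather than DPP, but this leads to the same application of Lemma~\ref{Jcont}(ii) that you describe.
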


\begin{proof}
 Denote by $\gamma_n$ an optimal coupling in the definition of $\tilde \calw(\mu_n,\mu)$. As explained above, one can find $\calf_0$-measurable random variables $(\eta,\xi)$ and $(\eta_n,\xi_n)$, with values in $\R^N\times\R^d$, such that $(\eta,\xi,\eta_n,\xi_n)$ has law $\gamma_n$. Then
\begin{align*}
    \|\eta_n-\eta\|_p^p+\|\xi_n-\xi\|_r^r=
\E\,\Big[ |\eta_n-\eta|^p+|\xi_n-\xi|^r\Big]    = \tilde \calw(\mu_n,\mu)
    \to0.
\end{align*}
By definition of the value functions $v$ and $V$ and by point $(i)$ of Lemma \ref{Jcont} we have
\begin{align*}
\sup_{t\in[0,T]}|v(t,\mu_n)-v(t,\mu)|&= \sup_{t\in[0,T]}|V(t,\eta_n,\xi_n)-V(t,\eta,\xi)|
\\&\le \sup_{t\in[0,T],\,\alpha\in\cala}|J(t,\eta_n,\xi_n,\alpha)-J(t,\eta,\xi,\alpha)|\to 0,
\end{align*}
which proves $(i)$.

In view of $(i)$, to prove point $(ii)$ it is enough to show that for every $\mu\in\cald_{p,r}$ we have $v(t_n,\mu)\to v(t,\mu)$. Take $(\eta,\xi)$, $\calf_0$-measurable with $\P_{\eta,\xi}=\mu$. We distinguish two cases.

First suppose $t_n\ge t$. First we have
\begin{align}\label{destracontJ}
 |v(t_n,\mu)-v(t,\mu)|&=  |V(t_n,\eta,\xi)-V(t,\eta,\xi)|
\le \sup_{\alpha\in\cala}|J(t_n,\eta,\xi,\alpha)-J(t,\eta,\xi,\alpha)|.
\end{align}
We prove that the right-hand side of \eqref{destracontJ} tends to zero. 
By the flow property \eqref{flowprop},
\begin{align*}
&J(t,\eta,\xi,\alpha)-J(t_n,\eta,\xi,\alpha)=
J(t_n,X_{t_n}^{t,\eta,\xi,\alpha},Y_{t_n}^{t,\xi},\alpha)
-J(t_n,\eta,\xi,\alpha)
\\&\quad +
\E\bigg[\int_t^{t_n} \langle X_{s}^{t,\eta,\xi,\alpha} ,f\Big(Y_{s}^{t,\xi}, \E\,\left[\langle X_{s}^{t,\eta,\xi,\alpha},1_N\rangle\,|\, (Y_{s}^{t,\xi},\alpha_s)=(y,a)\right]\,\P_{Y_{s}^{t,\xi},\alpha_s}(dy\,da),\alpha_s\Big)\rangle\,ds \bigg].
\end{align*} 
From Corollary \ref{continizi}  we know that 
\begin{align*}
\sup_{\beta\in\cala}\|X_{t_n}^{t,\eta,\xi,\beta}-\eta\|_p\to0,\quad \|Y_{t_n}^{t,\xi}-\xi\|_r\to0, \qquad
t_n\to t.    
\end{align*}
By \eqref{unifUIbis}, replacing $(\eta_n^\beta,\xi_n)$ by $(X_{t_n}^{t,\eta,\xi,\beta},Y_{t_n}^{t,\xi})$, it follows that
\[
\sup_{\alpha\in\cala}
|J(t_n,X_{t_n}^{t,\eta,\xi,\alpha},Y_{t_n}^{t,\xi},\alpha)
-J(t_n,\eta,\xi,\alpha)|
\le
\sup_{\alpha\in\cala,\,\beta\in\cala}
|J(t_n,X_{t_n}^{t,\eta,\xi,\beta},Y_{t_n}^{t,\xi},\alpha)
-J(t_n,\eta,\xi,\alpha)|
\to 0.
\]
By an estimate entirely similar to \eqref{estimJint} we also obtain
\begin{align*}
&
\sup_{\alpha\in\cala}
\bigg|\E\bigg[\int_t^{t_n} \langle X_{s}^{t,\eta,\xi,\alpha} ,f\Big(Y_{s}^{t,\xi}, \E\,\left[\langle X_{s}^{t,\eta,\xi,\alpha},1_N\rangle\,|\, (Y_{s}^{t,\xi},\alpha_s)=(y,a)\right]\,\P_{Y_{s}^{t,\xi},\alpha_s}(dy\,da),\alpha_s\Big)\rangle\,ds\bigg]\bigg|
\\&\qquad \le K\,(t_n-t)\to 0,
\end{align*}
where the constant $K$
 may depend on $\|\eta\|_p$ and $\|\xi\|_r$.

 Now consider the second case $t_n\le t$. From the dynamic programming principle \eqref{DPPv} we obtain
 \begin{align*}
&v(t_n,\mu)=\sup_{\beta\in\cala} \bigg\{v(t, \P_{X_t^{t_n,\eta,\xi,\beta},Y_t^{t_n,\xi}})
\\
&\quad+
\E\left[\int_{t_n}^t \langle X_s^{t_n,\eta,\xi,\beta} ,f\Big(Y_s^{t_n,\xi}, \E\,\left[\langle X_s^{t_n,\eta,\xi,\beta},1_N\rangle\,|\, (Y_s^{t_n,\xi},\alpha_s)=(y,a)\right]\,\P_{Y_s^{t_n,\xi},\alpha_s}(dy\,da),\alpha_s\Big)\rangle\,ds\right] \bigg\},
\end{align*}
 and so we have
\begin{align*}
&|v(t_n,\mu)-v(t,\mu)|\le \sup_{\beta\in\cala} \Big| v(t, \P_{X_t^{t_n,\eta,\xi,\beta},Y_t^{t_n,\xi}})-v(t,\mu)\Big|
\\
&+\sup_{\beta\in\cala} \bigg|
\E\left[\int_{t_n}^t \langle X_s^{t_n,\eta,\xi,\beta} ,f\Big(Y_s^{t_n,\xi}, \E\,\left[\langle X_s^{t_n,\eta,\xi,\beta},1_N\rangle\,|\, (Y_s^{t_n,\xi},\alpha_s)=(y,a)\right]\,\P_{Y_s^{t_n,\xi},\alpha_s}(dy\,da),\alpha_s\Big)\rangle\,ds\right] \bigg|.
\end{align*}
As before, and similar to \eqref{estimJint}, the last term is $\le K\,(t_n-t)\to 0$. Finally we have
\begin{align*} \sup_{\beta\in\cala} \Big| v(t, \P_{X_t^{t_n,\eta,\xi,\beta},Y_t^{t_n,\xi}})-v(t,\mu)\Big|
&=
\sup_{\beta\in\cala} \Big| V(t,  X_t^{t_n,\eta,\xi,\beta},Y_t^{t_n,\xi})-V(t,\eta,\xi)\Big|
\\
&\le 
\sup_{\beta\in\cala}\sup_{\alpha\in\cala} \Big|J(t,  X_t^{t_n,\eta,\xi,\beta},Y_t^{t_n,\xi},\alpha)-J(t,\eta,\xi,\alpha)\Big|,
\end{align*}
which tends to $0$ as $n\to\infty$ by \eqref{unifUIbis}, because
\[
\sup_{\beta\in\cala}\|X_t^{t_n,\eta,\xi,\beta}-\eta\|_p\to0, \qquad \|Y_t^{t_n,\xi}-\xi\|_r\to 0,
\]
again 
by   Corollary \ref{continizi}.
\end{proof}

\section{The HJB equation}
\label{sec-HJB}

In this section we suppose that Assumptions (\nameref{HpA1})-(\nameref{HpUniformContinuityfg}) hold true and that $p\ge 2$ and $r\ge 2$ satisfy
\[
q\le r\left(1-\frac{1}{p}\right),\qquad 
\ell\le r\left(1-\frac{1}{p}\right).
\]

\subsection{Some spaces of measures}\label{subsec:spacesmeas}

It is convenient to  rewrite the separated problem 
\eqref{provveq:1}-\eqref{provvfunz:1}   in a way similar to a standard McKean-Vlasov equation, by rewriting  the probability measure $\E\,\left[\langle X_s,1_N\rangle\,|\, (Y_s,\alpha_s)=(y,a)\right]\P_{Y_s,\alpha_s}(dy\,da)$ (on $\R^d\times A$)
as a function of the joint law of $X_s$, $Y_s$, $\alpha_s$.
To this end recall the set $\cald_{p,r}$ introduced in 
\eqref{defdpr}:
\begin{align*}
\cald_{p,r}=\bigg\{ \mu\in\calp(\R^N\times \R^d)\,:\, \mu_1\in\calp_p(\R^N),\, \mu_2\in\calp_r(\R^d),
\,\mu_1(\R^N_+)=1,\, \langle \bar \mu_1,1_N\rangle= 1\bigg\},
\end{align*}
where 
$\mu_1$ and $\mu_2$ denote the marginals of $\mu$ and   $\bar \mu_1\in\R^N$ denotes the mean of $\mu_1$.
Let us also set
\begin{align*}
    \cald= \Big\{ \pi\in\calp(\R^N\times \R^d\times A)\,:\, \pi_{12}\in\cald_{p,r}\Big\},
\end{align*}
where $\pi_{12}$ is the marginal of $\pi$ on $\R^N\times \R^d$.
Take $q\ge 0$ with $q\le r(1-p^{-1})$ and
define functions $\Gamma:\cald\to \calp_q(\R^d\times A)$ and  $\Gamma_1:\cald_{p,r}\to \calp_q(\R^d)$ setting 
\[
\Gamma(\pi)(dy\,da)=\int_{\R^N}\langle x,1_N\rangle\,\pi(dx\,dy\,da), \qquad\Gamma_1(\mu)(dy)=\int_{\R^N}\langle x,1_N\rangle\,\mu(dx\,dy).
\]
Note that $\Gamma(\pi)$ and $\Gamma_1(\mu)$ are nonnegative measures due to the condition $\mu_1(\R^N_+)=1$ and they are probabilities since $\Gamma(\pi)(\R^d\times A)=\Gamma_1(\mu)(\R^d)= \langle \bar \mu_1,1_N\rangle=1 $. 
An application of the Hölder inequality shows that 
$\Gamma(\pi)\in\calp_q(\R^d\times A)$ and 
$\Gamma_1(\mu)\in\calp_q(\R^d)$.  
$\Gamma(\pi)$ is also absolutely continuous with respect to $\pi_{23}\in\calp(\R^d\times A)$, the  marginal of $\pi$ on $\R^d\times A$; indeed, disintegrating $\pi$ in the form $\pi(dx\,dy\,da)=\pi^{y,a}(dx)\pi_{23}(dy\,da)$ for a suitable kernel $\pi^{y,a}(dx)$, we have 
\[
\Gamma(\pi)(dy\,da)=\left(\int_{\R^N}\langle x,1_N\rangle\,\pi^{y,a}(dx)\right)\,\pi_{23}(dy\,da).
\]
Similarly, $\Gamma_1(\mu)$ is absolutely continuous with respect to $\mu_2$.
Now, if $\pi=\P_{X_s,Y_s,\alpha_s}$, then for any bounded measurable $\phi:\R^d\times A\to\R$,
\begin{align*}
\int_{\R^d\times A}\phi(y,a)\, \Gamma(\pi)(dy\,da)&=
\int_{\R^N\times\R^d\times A}\phi(y,a)\langle x,1_N\rangle\,  \pi(dx\,dy\,da)
\\&
=
\E\,[\phi(Y_s,\alpha_s) \,\langle X_s,1_N\rangle]
=
\E\,[\phi(Y_s,\alpha_s)\, \E\,[\langle X_s,1_N\rangle\,|\,Y_s,\alpha_s]]
\\&=
\int_{\R^d\times A}\phi(y,a)\,\E\,[ \langle X_s,1_N\rangle\,|\,(Y_s,\alpha_s)=(y,a)]\,\P_{Y_s,\alpha_s}(dy\,da).
\end{align*}
This proves that $\Gamma(\pi)(dy\,da)=\E\,[ \langle X_s,1_N\rangle\,|\,(Y_s,\alpha_s)=(y,a)]\,\P_{Y_s,\alpha_s}(dy\,da)$.
Similarly, if $\mu=\P_{X_s,Y_s}$, then
$\Gamma_1(\mu)(dy)=\E\,[ \langle X_s,1_N\rangle\,|\,Y_s=y]\,\P_{Y_s}(dy)$.

With this notation, equation \eqref{provveq:1} becomes
\begin{align} 
\left\{
\begin{array}{rcl}
    dX_s &=& \Lambda^T X_s  \,ds + diag(X_s)\,h \Big(Y_s, \Gamma(\P_{X_s,Y_s,\alpha_s}),\alpha_s\Big)\,dW_s,
    \\dY_s&=&\sigma(Y_s)\,dW_s,
    \\
    X_t&=&\eta,
    \\
    Y_t&=&\xi,
    \end{array}
    \right.
\end{align}
and the reward functional \eqref{provvfunz:1} can also be rewritten as
\begin{align}
    J(t,\eta,\xi,\alpha)=
\E\left[\int_t^T \big\langle X_s ,f\big(Y_s,\Gamma(\P_{X_s,Y_s,\alpha_s}),\alpha_s\big) \big\rangle\,ds + 
\big\langle X_T , g\big(Y_T,\Gamma_1(\P_{X_T,Y_T})\big) \big\rangle \right].
\end{align}
We have now rewritten the separated problem in a more usual form. However, due to the occurence of the mappings $\Gamma$ and $\Gamma_1$,    we can not directly apply known results on optimal control of McKean-Vlasov systems.

\bigskip

In order to proceed further we need to introduce an appropriate topology on the space $\cald_{p,r}$. Recall that in \eqref{opttransp} we have defined a mapping 
$\tilde \calw: \cald_{p,r}\times \cald_{p,r}\to \R_+$ which is clearly symmetric and such that 
$  \tilde \calw(\mu,\nu)=0$ if and only if $\mu=\nu$. 
$\tilde  \calw$ is not a metric because it does not satisfy the triangle inequality. However, a standard application of Dudley's glueing lemma in optimal transport theory (see e.g. Lemma 8.4 in \cite{AmbrosioBruéSemola}) and the elementary numerical inequality $(a+b)^c\le 2^{c-1}(a^c+b^c)$ (for $a\ge0$, $b\ge0$, $c\ge1$) shows that setting $K=\max(2^{p-1},2^{r-1})$ one has the  inequality
\[
\tilde \calw(\mu,\nu)\le K\,(\tilde \calw(\mu,\rho)+\tilde \calw(\rho,\nu)), \qquad \mu,\nu,\rho\in \cald_{p,r}.
\]
For its properties $\tilde \calw$ is called a semimetric on the set $\cald_{p,r}$. Some authors prefer the term quasimetric. A semimetric can be used to introduce a topology. For details  we refer to the paper \cite{PalSt09}, which contains a clear discussion and short  self-contained proofs of the facts that we are going to recall, and gives  references to the related literature. 

Given $\tilde \calw$, one can define the semimetric ball centered at $\mu\in\cald_{p,r}$ with radius $r>0$:
\[
B(\mu,r)=\{\nu\in \cald_{p,r}\,:\, \tilde \calw(\mu,\nu)<r\}.
\]
We say that $A\subset \cald_{p,r}$ is open if for any $\mu\in A$ one can find $r>0$ such that $B(\mu,r)\subset A$. The collection $\tau$ of open sets is a topology in $\cald_{p,r}$, as it can be easily verified. From now on we endow $\cald_{p,r}$   with the topology $\tau$ and any related concept, for instance Borel sets and measurability, will refer to it. 

We warn the reader that the semimetric balls $B(\mu,r)$ may fail to be open, as the example in \cite{PalSt09} shows. In the Proposition of Section 2 of \cite{PalSt09} it is proved that there exists a (genuine) metric $d$ on $\cald_{p,r}$ such that
\begin{align}
    \label{equivmetr}
    d(\mu,\nu)\le \tilde \calw(\mu,\nu)^\delta\le 4\, d(\mu,\nu),
\qquad \mu,\nu\in \cald_{p,r},
    \end{align}
where the constant $\delta\in(0,1]$ is chosen to satisfy $(2K)^\delta=2$. 
It follows immediately that  $\tau$ coincides with the topology induced by the metric  $d$. As a consequence, the notion of continuity  coincides with the notion of sequential continuity,  which can be equivalently expressed in terms of $d$ or of $\tilde \calw$. For instance,  Theorem \ref{Continuityofvinciandtime} states that the value function $v$ is continuous (for the topology $\tau$). Similarly, the notion of uniform continuity can be expressed, with the obvious formulations, equivalently in terms of $\tilde \calw$ or of $d$. As noted earlier, if  $p=r$, then $\tau$ is the topology  inherited from the Wasserstein space  $\calp_p(\R^N\times \R^d)$.

\subsection{The HJB equation}

In order to formulate an HJB equation, which turns out to be a parabolic equation for a real function defined on $[0,T]\times\cald_{p,r}$, we need to introduce appropriate notions of derivative for     functions $v:\cald_{p,r}\to\R$. We start from the concept of linear functional derivative, see e.g. \cite{CarmonaDelarue1}-\cite{CarmonaDelarue2}. 

\begin{Definition}
\label{Defderivative} Given a  function
$v:\cald_{p,r}\to \R$,   a measurable function 
\begin{align}
\delta_m  v:  \cald_{p,r} \times \R^N\times \R^d\to\R, \; \qquad \; (\mu,x,y)   \; \mapsto \; \delta_m  v(\mu;x,y)     
\end{align}
is called a  linear functional derivative of $v$ if for any $\mu,\nu\in \cald_{p,r}$ there exists a constant $C_{\mu,\nu}>0$ such that
    $$\left|
    \delta_m  v(\mu+\theta(\nu-\mu);x,y) \right|\le C_{\mu,\nu}\, (1+|x|^p+|y|^r),  $$
    for every  $x\in \R^N$, $y\in\R^d$, $\theta\in [0,1]$, and
\begin{align}
v(\nu)-v(\mu)  =   \int_0^1    \int_{\R^N\times\R^d} \delta_m  v(\mu + \theta(\nu-\mu);x,y) \,(\nu-\mu)(dx\,dy) \,
   d \theta.
\end{align}
\end{Definition}

We also define a space $\tilde C^{1,2}$ of regular test functions $v:[0,T]\times\cald_{p,r}\to\R$. The required conditions allow to prove a suitable  It\^o formula in Proposition \ref{Itogeneral}.

\begin{Definition}
We say that 
a measurable function
 $v:[0,T]\times \cald_{p,r}\to \R$ is of class $\tilde C^{1,2}([0,T]\times \cald_{p,r})$ if \begin{enumerate}
    \item for every $\mu\in \cald_{p,r}$ the function 
$t\mapsto \upsilon(t,\mu)$ is differentiable on $[0,T]$; 
\item 
denoting $\partial_t v$ its time derivative, the function
$\partial_tv(t,\mu)$ is a continuous function of $(t,\mu)\in [0,T]\times \cald_{p,r}$;
\item 
for every $t\in[0,T]$
a derivative $\delta_m  v(t,\mu; x,y)$ exists and it is measurable in all its arguments;
\item
$\delta_m  v(t,\mu;x,y)$ is twice continuously differentiable on $\R^N\times \R^d$ as a function of $(x,y)$
and the gradient and the Hessian matrix
\begin{align*}
\partial_{(x\,y)}\delta_m  v&=
\left(\begin{array}{cc}\partial_x\delta_m  v&\partial_y\delta_m  v
\end{array}\right): [0,T]\times \cald_{p,r}  \times \R^N\times \R^d \to \R^{N+d},
\\
\partial^2_{(x\,y)}\delta_m  v&=\left(\begin{array}{cc}
\partial^2_x\delta_m  v &
\partial_y\partial_x\delta_m  v
\\
\partial_x\partial_y\delta_m  v&
\partial^2_y\delta_m  v
\end{array}\right):[0,T]\times \cald_{p,r}  \times \R^N\times \R^d \to \R^{(N+d)\times (N+ d)},
\end{align*}
are globally bounded: there exists a constant $C\ge 0$
such that
\begin{align}\label{growthpartialv}
\Big|\partial_{(x\,y)}\delta_m  v(t,\mu;x,y) \Big| +\Big|\partial_{(x\,y)}^2\delta_m  v(t,\mu;x,y)\Big| &\le C,
\end{align}
for every $t\in [0,T]$,  $\mu\in \cald_{p,r}$, $x\in\R^N$, $y\in\R^d$;

\item for every   compact set $H$ of   $\R^N\times\R^d$,
the functions  $\partial_{(x\,y)}\delta_m  v(t,\mu;x,y)$ and  $\partial^2_{(x\,y)}\delta_m  v(t,\mu;x,y)$ 
are continuous 
 functions of $(t,\mu)\in [0,T]\times \cald_{p,r}$, uniformly in $(x,y)\in H$; more precisely, whenever $t_n\to t$, $\tilde \calw(\mu_n,\mu)\to 0$,  and $H\subset \R^N\times \R^d$ is compact
we have
$$\sup_{(x,y)\in H}\Big|
\partial_{(x\,y)}\delta_m  v(t_n,\mu_n;x,y) 
-
\partial_{(x\,y)}\delta_m  v(t,\mu;x,y)\Big|\to 0,
$$
$$\sup_{(x,y)\in H}\left|
\partial_{(x\,y)}^2\delta_m  v(t_n,\mu_n;x,y) 
-
\partial_{(x\,y)}^2\delta_m  v(t,\mu; x,y)\right|\to 0.
$$ 
\end{enumerate}
\end{Definition}

\begin{Remark}\label{unifcontpartialv}
By standard arguments, if $K$ is a compact subset of $\cald_{p,r}$ then 
the functions  
$\partial_{(x\,y)}\delta_m  v(t,\mu;x,y)$ and  $\partial^2_{(x\,y)}\delta_m  v(t,\mu;x,y)$   
are uniformly continuous functions of $(t,\mu)\in [0,T]\times K$, uniformly in $(x,y)\in H$;
namely: for every compact sets $H\subset \R^N\times \R^d$, $K\subset \cald_{p,r}$, and every $\epsilon>0$, there exists $\delta>0$ such that
$$\Big|
\partial_{(x\,y)}\delta_m  v(t',\mu';x,y) 
-
\partial_{(x\,y)}\delta_m  v(t,\mu;x,y) \Big|+\Big|
\partial_{(x\,y)}^2\delta_m  v(t',\mu';x,y)
-
\partial_{(x\,y)}^2\delta_m  v(t,\mu;x,y)\Big|< \epsilon
$$ 
whenever $(x,y)\in H$,  $t,t'\in [0,T]$, $  \mu,\mu'\in K$, $\tilde \calw(\mu,\mu')<\delta$, $|t-t'|<\delta$. \qed
\end{Remark}

\begin{Proposition}\label{Itogeneral}
For any control process $\alpha\in\cala$, let  $(X_s,Y_s)_{s\in[t,T]}$ be the unique solution to the state   equation \eqref{provveq:1} provided by Theorem \ref{existstateeqprovv}.   Denote $\mu_s=\P_{X_s,Y_s}$ and define processes
$B$ and 
$\Sigma$, with values  in $\R^{N + d}$ and  $\R^{(N + d)\times d}$ respectively, setting 
\begin{align} 
B_s= 
\left(\begin{array}{c}
\Lambda^T X_s \\
0
\end{array}\right),
\qquad 
\Sigma_s=\left(\begin{array}{c}
diag(X_s)\,h \Big(Y_s, \Gamma(\P_{X_s,Y_s,\alpha_s}),\alpha_s\Big)
\\
\sigma(Y_s)
\end{array}\right).
\end{align}
 Then, for any $v\in\tilde C^{1,2}([0,T]\times \cald_{p,r})$, we have
\begin{align*}
v\left(s,\mu_s\right)-v\left(t,\mu_t\right)&= \int_t^s \Big\{\partial_tv\left(r,\mu_r\right)+\E\,\Big[\partial_{(x\,y)}\delta_m  v\left(r,\mu_r;X_r,Y_r\right)\,B_r
    \\&\quad +
    \frac{1}{2} \,Trace[
(\Sigma_r\Sigma_r^T)\,\partial^2_{(x\,y)}\delta_m  v\left(r,\mu_r;X_r,Y_r\right)]\Big]\Big\}\,dr, \qquad s\in[t,T].
\end{align*} 

\end{Proposition}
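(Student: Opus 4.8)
The plan is the classical one for an It\^o formula along a flow of measures: discretise time, telescope, split each increment into a ``time'' part and a ``measure'' part, treat the measure part by combining the linear functional derivative of Definition \ref{Defderivative} with the finite-dimensional It\^o formula, take expectations, and pass to the limit as the mesh of the partition vanishes.

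First I would fix $t\le s$ and a partition $t=t_0<\cdots<t_n=s$ of mesh $|\pi|$, and write
\[
v(s,\mu_s)-v(t,\mu_t)=\sum_{k=0}^{n-1}\Big(\big[v(t_{k+1},\mu_{t_{k+1}})-v(t_k,\mu_{t_{k+1}})\big]+\big[v(t_k,\mu_{t_{k+1}})-v(t_k,\mu_{t_k})\big]\Big).
\]
For the first bracket, differentiability of $v$ in time, continuity of $\partial_t v$, and the $\tilde\calw$-continuity of $r\mapsto\mu_r$ (which follows from path continuity of $(X,Y)$ and the diagonal-coupling bound $\tilde\calw(\mu_r,\mu_{r'})\le\E[\,|X_r-X_{r'}|^p+|Y_r-Y_{r'}|^r\,]$ together with dominated convergence, using the moment estimates of Theorem \ref{existstateeqprovv}) give, after summation and letting $|\pi|\to0$, the contribution $\int_t^s\partial_t v(r,\mu_r)\,dr$.

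For the second bracket I would use that $\cald_{p,r}$ is convex (the constraints $\mu_1(\R^N_+)=1$, $\langle\bar\mu_1,1_N\rangle=1$ and the moment conditions are all stable under mixtures), so the segment $\mu^\theta_k:=\mu_{t_k}+\theta(\mu_{t_{k+1}}-\mu_{t_k})$ stays in $\cald_{p,r}$ and, by Definition \ref{Defderivative},
\[
v(t_k,\mu_{t_{k+1}})-v(t_k,\mu_{t_k})=\int_0^1\Big(\E\big[\delta_m v(t_k,\mu^\theta_k;X_{t_{k+1}},Y_{t_{k+1}})\big]-\E\big[\delta_m v(t_k,\mu^\theta_k;X_{t_k},Y_{t_k})\big]\Big)\,d\theta,
\]
the expectations being finite by the growth bound in Definition \ref{Defderivative} and the $L^p$, $L^r$ bounds on $X$, $Y$. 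For fixed $k$ and $\theta$ the function $(x,y)\mapsto\delta_m v(t_k,\mu^\theta_k;x,y)$ is $C^2$ with first and second derivatives bounded by the constant of \eqref{growthpartialv}; applying the classical It\^o formula to it along $(X_r,Y_r)_{r\in[t_k,t_{k+1}]}$, whose dynamics is $d(X_r,Y_r)=B_r\,dr+\Sigma_r\,dW_r$, and taking expectations — the stochastic integral is a true martingale because $\partial_{(x\,y)}\delta_m v$ is bounded and $\E\int|\Sigma_r|^2\,dr<\infty$ by Theorem \ref{existstateeqprovv} — the inner bracket equals $\E\int_{t_k}^{t_{k+1}}\big[\partial_{(x\,y)}\delta_m v(t_k,\mu^\theta_k;X_r,Y_r)\,B_r+\tfrac12 Trace(\Sigma_r\Sigma_r^T\,\partial^2_{(x\,y)}\delta_m v(t_k,\mu^\theta_k;X_r,Y_r))\big]\,dr$. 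Summing over $k$ and integrating over $\theta$ gives an exact expression for the measure part.

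It remains to pass to the limit $|\pi|\to0$ in $\sum_k\int_0^1\E\int_{t_k}^{t_{k+1}}[\cdots]\,dr\,d\theta$ and to identify the limit with $\int_t^s\E[\,\partial_{(x\,y)}\delta_m v(r,\mu_r;X_r,Y_r)B_r+\tfrac12 Trace(\Sigma_r\Sigma_r^T\partial^2_{(x\,y)}\delta_m v(r,\mu_r;X_r,Y_r))\,]\,dr$; this is the step I expect to be the main obstacle. For $r\in[t_k,t_{k+1}]$ one has $t_k\to r$, and the mixture coupling of $\mu^\theta_k$ with $\mu_r$ gives $\tilde\calw(\mu^\theta_k,\mu_r)\le(1-\theta)\E[|X_{t_k}-X_r|^p+|Y_{t_k}-Y_r|^r]+\theta\,\E[|X_{t_{k+1}}-X_r|^p+|Y_{t_{k+1}}-Y_r|^r]$, which tends to $0$ uniformly in $\theta$ and in $r\in[t_k,t_{k+1}]$ by uniform continuity of the continuous paths of $(X,Y)$ on $[t,s]$ and dominated convergence. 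Since $r\mapsto\mu_r$ is $\tilde\calw$-continuous, the image of $[t,s]$ together with the family $\{\mu^\theta_k\}$ lies in a common compact $\mathcal K\subset\cald_{p,r}$, so Remark \ref{unifcontpartialv} yields $\sup_{(x,y)\in H}|\partial_{(x\,y)}\delta_m v(t_k,\mu^\theta_k;x,y)-\partial_{(x\,y)}\delta_m v(r,\mu_r;x,y)|\to0$ (and similarly for the second derivatives) for every fixed compact $H\subset\R^N\times\R^d$. To upgrade this uniform-on-compacts convergence to convergence of the expectations, with $(X_r,Y_r)$ ranging over the whole space, I would split each expectation over $\{(X_r,Y_r)\in H\}$ and its complement: on $H$ use the previous convergence together with the uniform bounds $\E|B_r|\le C$, $\E|\Sigma_r\Sigma_r^T|\le C$; on $H^c$ use the global bound \eqref{growthpartialv} on the derivatives and the domination $|B_r|\le C\sup_u|X_u|$, $|\Sigma_r|^2\le C(1+\sup_u|X_u|^2+\sup_u|Y_u|^2)$ by fixed $L^1$ variables, so that $\sup_r\E[|B_r|\,\mathbbm 1_{\{(X_r,Y_r)\notin H\}}]$ and $\sup_r\E[|\Sigma_r\Sigma_r^T|\,\mathbbm 1_{\{(X_r,Y_r)\notin H\}}]$ can be made arbitrarily small by taking $H$ large. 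Letting $|\pi|\to0$ for fixed $H$ and then $H\uparrow\R^N\times\R^d$ yields the claimed identity; the same truncation argument shows in passing that the limiting integrand is measurable and integrable in $r$, so the right-hand side is well defined.
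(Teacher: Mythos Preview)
Your sketch is correct and is precisely the classical discretise--telescope--linear-derivative--It\^o argument that the paper has in mind: the paper's own proof simply observes that the state equation takes the form $d(X_s,Y_s)=B_s\,ds+\Sigma_s\,dW_s$ and refers to Section 5.6 of \cite{CarmonaDelarue1} and Theorem 4.6 of \cite{DeCrescenzoFuhrmanKharroubiPham} for the details, which are exactly the steps you carry out. The one point you should make fully explicit is your claim that the $\mu^\theta_k$ lie in a common compact $\mathcal K\subset\cald_{p,r}$: this is true because the closed convex hull of the compact curve $\{\mu_r:r\in[t,s]\}$ is again compact (convex mixtures preserve tightness and the uniform integrability of $|x|^p$ and $|y|^r$ under the marginals), but it deserves a sentence of justification.
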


\begin{proof} The
state   equation \eqref{provveq:1} can be written in the form 
\begin{align*} 
    d(X_s\;Y_s) =B_s \,ds +
    \Sigma_s\,dW_s, \qquad (X_t,Y_t)=(\eta,\xi).
\end{align*}
The proof of the Proposition can be achieved along the lines of well known arguments,
see e.g. Section 5.6 in \cite{CarmonaDelarue1}. 
For instance the proof of Theorem 4.6 in \cite{DeCrescenzoFuhrmanKharroubiPham} uses notation close to ours and can be repeated with minor changes and even simplifications (replace the set $U$ in \cite{DeCrescenzoFuhrmanKharroubiPham} by a singleton and the squared distance ${\bf d}^2$ by $\tilde \calw$).
\end{proof}

\vspace{1mm}

For convenience we rewrite the It\^o formula in a different way. First we use the differential notation:
\begin{align*}
\frac{d}{ds} v\left(s,\mu_s\right)\!=\! \partial_tv \left(s,\mu_s\right)+\E\Big[\partial_{(x\,y)}\delta_m  v\left(s,\mu_s;X_s,Y_s\right)\,B_s
   + \frac{1}{2} Trace[
(\Sigma_s\Sigma_s^T)\,\partial^2_{(x\,y)}\delta_m  v\left(s,\mu_s;X_s,Y_s\right)]\Big].
\end{align*} 
Next we define   $B: \R^N\to \R^{N+d}$ and
$\Sigma: \R^N\times \R^d\times \cald\times A\to \R^{(N + d)\times d}$ setting
\begin{align} 
B(x)= 
\left(\begin{array}{c}
\Lambda^T x \\
0
\end{array}\right), 
\qquad
\Sigma(x,y,\pi,a)=
\left(\begin{array}{c}
diag(x)\,h (y, \Gamma(\pi),a)
\\
\sigma(y)
\end{array}\right),
\end{align}
so that
$B_s  =B (  X_s)$,  $\Sigma_s=\Sigma(  X_s, Y_s,  \P_{X_s,Y_s,\alpha_s},\alpha_s)$, and the It\^o formula becomes
\begin{align*}
\frac{d}{ds}\,v\left(s,\P_{X_s,Y_s}\right)&= \partial_sv \left(s,\P_{X_s,Y_s}\right)+\E\,\Big[\partial_{(x\,y)}\delta_m  v\left(s,\P_{X_s,Y_s};X_s,Y_s\right)\,B(X_s)
    \\&\quad +
    \frac{1}{2} Trace[
(\Sigma\Sigma^T)\left(X_s,Y_s,\P_{X_s,Y_s,\alpha_s},\alpha_s\right)\partial^2_{(x\,y)}\delta_m  v\left(s,\P_{X_s,Y_s};X_s,Y_s\right)]\Big].
\end{align*}
Next we note that
$\partial_{(x\,y)}\delta_m  v\,B= \partial_x\delta_m  v\,\,\Lambda^Tx$, and since 
\[
\Sigma\Sigma^T= 
\left(\begin{array}{cc}
 diag(x)\,(hh^T) (y, \Gamma(\pi),a)\,diag(x)&
 diag(x)\,h (y, \Gamma(\pi),a)\,\sigma(y)^T
\\
\sigma(y)\, h^T (y, \Gamma(\pi),a)\,diag(x)
 &
 \sigma\sigma^T(y)
\end{array}\right)
\]
we obtain
\begin{align*}
Trace[
\Sigma\Sigma^T\partial^2_{(x\,y)}\delta_m  v]&= Trace[
 diag(x)\,(hh^T) (y, \Gamma(\pi),a)\,diag(x)\partial^2_x\delta_m  v]
 \\&\quad
 +2Trace[
  diag(x)\,h (y, \Gamma(\pi),a)\,\sigma(y)^T\, 
\partial_x\partial_y\delta_m  v]+Trace[
\sigma\sigma^T(y)\partial^2_y\delta_m  v] 
 . 
\end{align*}
So the It\^o formula can written in the form
\begin{align}\label{itosolution}
    \frac{d}{ds}\,v\left(s,\P_{X_s,Y_s}\right)=\partial_sv\left(s,\P_{X_s,Y_s}\right)+\E\,\Big[\call^{\alpha_s}v\left(s,X_s,Y_s,\P_{X_s,Y_s,\alpha_s}\right)\Big]
\end{align}
provided that we define
\begin{align*}
&\call^{a}v\left(s,x,y,\pi\right)= \partial_x\delta_m v(s,\pi_{12};x,y)\Lambda^Tx
\\&\quad 
+\frac{1}{2}\,
Trace[
 diag(x)\,(hh^T) (y, \Gamma(\pi),a)\,diag(x)\partial^2_x\delta_m  v(s,\pi_{12};x,y)]
    \\&\quad 
    +Trace[
  diag(x)\,h (y, \Gamma(\pi),a) \sigma(y)^T
\partial_x\partial_y\delta_m  v(s,\pi_{12};x,y)]+\frac{1}{2}\,
Trace[
\sigma\sigma^T(y)\partial^2_y\delta_m  v(s,\pi_{12};x,y)] , 
\end{align*}
for $s\in[0,T]$, $x\in\R^N_+$, $y\in\R^d$, $\pi\in\cald$, $a\in A$, and recalling that $\pi_{12}$ denotes the marginal of $\pi$ on $\R^N\times \R^d$.
This leads to considering the following  form of the HJB equation, a parabolic equation for an unknown function $w:[0,T]\times\cald_{p,r}\to\R$:
\begin{align} \label{HJB}
\left\{
\begin{array}{l}
\dis    -\partial_tw(t,\mu) -
\sup_{\pi\in \cald\,:\,\pi_{12}=\mu} \int_{\R^N\times\R^d\times A}\left[\call^{a}w\left(t,x,y,\pi\right) +\langle x,f(y,\Gamma(\pi),a)\rangle
    \right] \pi(dx\,dy\,da)=0,
    \\\dis
w\left(T,\mu\right)=\int_{\R^N\times \R^d}\langle x,g(y,\Gamma_1(\mu))\rangle\,\mu(dx\, dy),
   \qquad\qquad t\in [0,T],\; \mu\in\cald_{p,r}.
   \end{array}
    \right.
\end{align}

Our first result is the following verification theorem, where $v$ denotes the value function defined in \eqref{defvaluefunction}.

\begin{Theorem} \label{verifth}
    Suppose that $w\in\tilde C^{1,2}([0,T]\times\cald_{p,r})$ is a solution to \eqref{HJB}. Then $w\ge v$.

Suppose further that one can find a  measurable function $\widehat a (t,x,y,\mu)$, with values in $A$, such that at any $t$ the $\sup$ in \eqref{HJB} is attained at some point $\widehat\pi$ of the form
\[
\widehat\pi(dx\,dy\,da)=\delta_{\widehat a (t,x,y,\mu)}(da)\,\mu(dx\,dy);
\]
also assume that the closed-loop system
\begin{align*} 
\left\{
\begin{array}{rcl}
    dX_s &=& \Lambda^T X_s  \,ds + diag(X_s)\,h \Big(Y_s, \Gamma(\P_{X_s,Y_s,\widehat a (s,X_s,Y_s,\P_{X_s,Y_s})}),\widehat a (s,X_s,Y_s,\P_{X_s,Y_s})\Big)\,dW_s,
\\dY_s&=&\sigma(Y_s)\,dW_s,
    \\
    X_t&=&\eta,
    \\
    Y_t&=&\xi,
    \end{array}
    \right.
\end{align*}
has a solution, in the sense of Theorem  \ref{existstateeqprovv}. Then   the process $\widehat \alpha$ defined by 
\[
\widehat \alpha_s=
\widehat a (s,X_s,Y_s,\P_{X_s,Y_s}), \qquad s\in[t,T],
\]
belongs to $\cala$, we have $w=v$ and $\widehat \alpha$ is optimal. 
\end{Theorem}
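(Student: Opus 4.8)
The plan is to prove this in two stages, following the classical verification scheme adapted to the Wasserstein setting. First I would establish the inequality $w \geq v$. Fix $t \in [0,T]$, $\mu \in \cald_{p,r}$, and an arbitrary control $\alpha \in \cala$; let $(X_s, Y_s)_{s\in[t,T]}$ be the associated solution to \eqref{provveq:1} with $\P_{\eta,\xi} = \mu$, and write $\mu_s = \P_{X_s,Y_s}$ and $\pi_s = \P_{X_s,Y_s,\alpha_s}$. Applying the It\^o formula of Proposition \ref{Itogeneral}, in the form \eqref{itosolution}, to $w$ along this flow, integrating from $t$ to $T$, and using the terminal condition in \eqref{HJB}, I get
\begin{align*}
\E\big[\langle X_T, g(Y_T,\Gamma_1(\mu_T))\rangle\big] - w(t,\mu)
= \int_t^T \Big( \partial_s w(s,\mu_s) + \E\big[\call^{\alpha_s} w(s,X_s,Y_s,\pi_s)\big]\Big)\, ds.
\end{align*}
Since $(\pi_s)_{12} = \mu_s$, the HJB equation \eqref{HJB} gives the pointwise bound
$\partial_s w(s,\mu_s) + \E[\call^{\alpha_s} w(s,X_s,Y_s,\pi_s)] \leq -\E[\langle X_s, f(Y_s,\Gamma(\pi_s),\alpha_s)\rangle]$
(here one inserts $\pi_s$ as one admissible competitor in the $\sup$, noting that $\Gamma(\pi_s)$ is exactly the measure $\E[\langle X_s,1_N\rangle\,|\,(Y_s,\alpha_s)=\cdot]\,\P_{Y_s,\alpha_s}$ appearing in $J$). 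Rearranging yields $w(t,\mu) \geq J(t,\eta,\xi,\alpha)$, and taking the supremum over $\alpha$ gives $w(t,\mu) \geq V(t,\eta,\xi) = v(t,\mu)$.

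For the reverse inequality under the additional hypotheses: with the optimal feedback $\widehat a$ and the assumed solvability of the closed-loop system, set $\widehat\alpha_s = \widehat a(s,X_s,Y_s,\P_{X_s,Y_s})$. One must first check $\widehat\alpha \in \cala$, i.e. that it is $\F$-predictable; this follows from measurability of $\widehat a$, continuity of $(X,Y)$, and the fact that $s \mapsto \P_{X_s,Y_s}$ is deterministic (so composing with it preserves adaptedness). Repeating the It\^o computation above along the closed-loop trajectory, the pointwise HJB inequality becomes an equality at $\widehat\pi_s = \delta_{\widehat a(s,\cdots)}\mu_s = \pi_s$, since the supremum in \eqref{HJB} is attained there. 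Hence $w(t,\mu) = J(t,\eta,\xi,\widehat\alpha) \leq v(t,\mu)$, and combined with the first part $w = v$ and $\widehat\alpha$ is optimal.

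The main obstacle I anticipate is twofold and both issues are technical rather than conceptual. First, one must verify that all the expectations in the It\^o formula are finite and that the local martingale terms truly vanish in expectation — this requires matching the growth bound \eqref{growthpartialv} on $\partial_{(x\,y)}\delta_m w$ and $\partial^2_{(x\,y)}\delta_m w$ (which is only linear/quadratic in $(x,y)$, giving $|\partial_{(x\,y)}\delta_m w| \le C$) against the integrability $\E[\sup_s |X_s|^p] < \infty$, $\E[\sup_s|Y_s|^r] < \infty$ from Theorem \ref{existstateeqprovv}, together with the growth of $f,g$ from \eqref{growthfg} and the moment bound on $\Gamma(\pi_s)$ established in \eqref{MomentofqsPs}--\eqref{FirstSummandMomentofqsPs}; the worst term is the quadratic $\Sigma\Sigma^T \partial^2 \delta_m w$ contribution, controlled because $h$ is bounded and $|diag(X_s)|^2 \lesssim |X_s|^2 \le |X_s|^p$ after H\"older. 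Second, the measurable-selection step — that the attained supremum yields a feedback $\widehat a$ of the stated product form $\widehat\pi = \delta_{\widehat a}\mu$ — is simply \emph{assumed} in the statement, so here I only need to confirm that this assumption is used consistently, namely that $\widehat\pi$ is indeed an admissible competitor (its $\R^N\times\R^d$-marginal is $\mu$, hence in $\cald_{p,r}$, and $\widehat\pi \in \cald$) and that along the closed loop one genuinely has $\P_{X_s,Y_s,\widehat\alpha_s} = \delta_{\widehat a(s,X_s,Y_s,\mu_s)} \otimes_{\text{graph}} \mu_s$, which holds because $\widehat\alpha_s$ is by construction a deterministic function of $(X_s,Y_s)$ via the (deterministic) flow of laws.
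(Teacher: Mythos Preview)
Your proposal is correct and follows essentially the same approach as the paper: apply the It\^o formula \eqref{itosolution} to $w$ along the controlled flow, use the HJB equation to bound (respectively, equate) the integrand, and conclude $w\ge J$ for any $\alpha$ and $w=J$ for $\widehat\alpha$. The paper presents only a brief sketch and explicitly calls the argument standard; your write-up is in fact more detailed on the integrability checks and on why $\widehat\alpha\in\cala$, which the paper does not spell out.
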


\begin{proof}
The argument is standard, see for instance \cite{PhamWei2} and \cite{DjePosTan22} in the context of McKean-Vlasov systems, so we only give a sketch of proof.

Let $\alpha$ be an admissible control and $(X,Y)$ the corresponding trajectory starting from $(\xi,\eta)$ at time $t$, with $\mu=\P_{\xi,\eta}$.  Writing the reward functional in the form
\begin{align*}
J(t,\eta,\xi,\alpha)=&
\int_t^T\int_{\R^N\times\R^d\times A} \big\langle x ,f\big(y,\Gamma(\P_{X_s,Y_s,\alpha_s}),a\big)\big\rangle\,\P_{X_s,Y_s,\alpha_s}(dx\,dy\,da)\,ds 
\\&+ \int_{\R^N\times\R^d} \big\langle x , g\big(y,\Gamma_1(\P_{X_T,Y_T})\big) \big\rangle \,\P_{X_T,Y_T}(dx\,dy) ,
\end{align*}
and applying the It\^o formula \eqref{itosolution} we obtain
\begin{align*}
w(t,\mu)&=
J(t,\eta,\xi,\alpha)-
\int_t^T\Big\{\partial_tw(s,\P_{X_s,Y_s)}
\\&+ \int_{\R^N\times\R^d\times A}\Big[
\call^{a}w\left(s,x,y,\P_{X_s,Y_s,\alpha_s}\right)+
\big\langle x ,f\big(y,\Gamma(\P_{X_s,Y_s,\alpha_s}),a\big) \big\rangle\Big]\,\P_{X_s,Y_s,\alpha_s}(dx\,dy\,da)\Big\}\,ds .
\end{align*}
As $w$ is a solution to the HJB equation, the term $\{\ldots\}$ in curly brackets is $\le0$, which implies $w(t,\mu)\ge J(t,\eta,\xi,\alpha)$ and so $w\ge v$. When the control equals $\widehat\alpha$ we have $\{\ldots\}=0$ and so $w(t,\mu)= J(t,\eta,\xi,\widehat\alpha)$, which implies the optimality of $\widehat\alpha$ and the equality $w=v$.
\end{proof}

\begin{Remark}\label{verifoptpartialobs}
When all the assumptions of the previous theorems hold, and the initial conditions are deterministic, then the optimal control constructed above is indeed predictable with respect to the smaller filtration $\cala^W$ and it is optimal for the partially observed control problem as well: compare with the comments following Proposition \ref{AeAW}. \qed 
\end{Remark}

\begin{Remark}
In the simpler case where there is no dependence of the coefficients on the law of the control, namely the control problem takes the form
\begin{align*} 
\left\{
\begin{array}{rcl}
    dX_s &=& \Lambda^T X_s  \,ds + diag(X_s)\,h \big(Y_s, \Gamma_1(\P_{X_s,Y_s}),\alpha_s\big)\,dW_s,
    \\dY_s&=&\sigma(Y_s)\,dW_s,
    \\
    X_t&=&\eta,
    \\
    Y_t&=&\xi,
    \end{array}
    \right.
\end{align*}
\begin{align*}
J(t,\eta,\xi,\alpha)=
\E\left[\int_t^T \big\langle X_s ,f\big(Y_s,\Gamma_1(\P_{X_s,Y_s}),\alpha_s\big)\big\rangle\,ds + \langle X_T , g\Big(Y_T,\Gamma_1(\P_{X_T,Y_T})\Big)\rangle \right],
\end{align*}
then $\call^{a}v\left(x,y,\pi\right)$ is in fact a function of the form $\call^{a}v\left(x,y,\mu\right)$ for $\mu\in\cald_{p,r}$
and we are led to considering a modified HJB equation:
\begin{align} 
\left\{
\begin{array}{l}
\dis    -\partial_tw(t,\mu) -
    \int_{\R^N\times \R^d} \sup_{a\in A}\left[\call^{a}w\left(t,x,y,\mu\right) +\langle x,f(y,\Gamma_1(\mu),a)\rangle
    \right] \mu(dx\,dy)=0,
    \\\dis
w\left(T,\mu\right)=\int_{\R^N\times \R^d}\langle x,g(y,\Gamma_1(\mu))\rangle\,\mu(dx\, dy),
   \qquad\qquad t\in [0,T],\; \mu\in\cald_{p,r}.
   \end{array}
    \right.
\end{align}
The following analogue of Theorem  \ref{verifth} holds, with similar proof:
if a   solution  $w\in\tilde C^{1,2}([0,T]\times\cald_{p,r})$ exists, then $w\ge v$; and if in addition the $\sup$ is attained at some point
\[
a=\widehat a (t,x,y,\mu),
\]
for some measurable function $\widehat a$,
and the closed loop equation admits a solution, then $w=v$ and $\widehat a$ is an optimal control law. 
\qed 
\end{Remark}

\medskip

We end this Section proving viscosity properties of  the value function. We introduce the following notion of viscosity solution to the HJB equation where, in view of the continuity results on the value function that we proved in Section \ref{SectionPropv}, it is not required to introduce upper and lower envelopes.

\begin{Definition}
\begin{enumerate}[(i)]
\item An upper semicontinuous function $w:[0,T]\times \cald_{p,r}\rightarrow \R$ is called a viscosity subsolution to   \eqref{HJB}  if
\[
w\left(T,\mu\right)\le \int_{\R^N\times \R^d}\langle x,g(y,\Gamma_1(\mu))\rangle\,\mu(dx\, dy),
   \qquad \mu\in\cald_{p,r},
\]
and for any $\varphi\in \tilde C^{1,2}([0,T]\times \cald_{p,r})$  and $(t,\mu)\in [0,T)\times \cald_{p,r}$ such that
\[
(w-\varphi)(t,\mu)  =  \max_{[0,T]\times \cald_{p,r}}(w-\varphi) 
\]
we have
\[ -\partial_t\varphi(t,\mu) -
\sup_{\pi\in \cald\,:\,\pi_{12}=\mu} \int_{\R^N\times\R^d\times A}\left[\call^{a}\varphi\left(t,x,y,\pi\right) +\langle x,f(y,\Gamma(\pi),a)\rangle
\right] \pi(dx\,dy\,da)\le 0.
\]
                   
\item A lower semicontinuous function $w:[0,T]\times \cald_{p,r}\rightarrow \R$ is called a viscosity supersolution to   \eqref{HJB}  if
       \[
w\left(T,\mu\right)\ge \int_{\R^N\times \R^d}\langle x,g(y,\Gamma_1(\mu))\rangle\,\mu(dx\, dy),
   \qquad \mu\in\cald_{p,r},
\]
and for any $\varphi\in \tilde C^{1,2}([0,T]\times \cald_{p,r})$  and $(t,\mu)\in [0,T)\times \cald_{p,r}$ such that
\[
(w-\varphi)(t,\mu)  =  \min_{[0,T]\times \cald_{p,r}}(w-\varphi) 
\]
we have
\[ -\partial_t\varphi(t,\mu) -
\sup_{\pi\in \cald\,:\,\pi_{12}=\mu} \int_{\R^N\times\R^d\times A}\left[\call^{a}\varphi\left(t,x,y,\pi\right) +\langle x,f(y,\Gamma(\pi),a)\rangle
\right] \pi(dx\,dy\,da)\ge 0.
\]

        \item We say that $w$ is a viscosity solution to   \eqref{HJB}  if  $w$ is both a viscosity subsolution and supersolution to   \eqref{HJB}. 

    \end{enumerate}
\end{Definition}

\begin{Theorem}\label{thsolvisc}
   The value function $v$ is a viscosity solution to \eqref{HJB}.
    
\end{Theorem}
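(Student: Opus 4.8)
The plan is to prove that the value function $v$ is a viscosity solution of the HJB equation \eqref{HJB} by verifying the subsolution and supersolution properties separately, each time relying on the dynamic programming principle of Proposition \ref{propDPPbis} and the It\^o formula of Proposition \ref{Itogeneral}. Continuity of $v$ (hence upper and lower semicontinuity) has already been established in Theorem \ref{Continuityofvinciandtime}, and the terminal condition $v(T,\mu)=\int_{\R^N\times\R^d}\langle x,g(y,\Gamma_1(\mu))\rangle\,\mu(dx\,dy)$ follows directly from the definition $v(T,\mu)=V(T,\eta,\xi)=J(T,\eta,\xi,\alpha)$ with an empty time integral. So the substance is in the two inequalities at an interior point $(t,\mu)\in[0,T)\times\cald_{p,r}$.

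\textbf{Supersolution property.} Let $\varphi\in\tilde C^{1,2}([0,T]\times\cald_{p,r})$ with $(v-\varphi)(t,\mu)=\min_{[0,T]\times\cald_{p,r}}(v-\varphi)$; without loss of generality $v(t,\mu)=\varphi(t,\mu)$, so $\varphi\le v$ everywhere. Fix $a\in A$ as a constant control $\alpha_s\equiv a$, and let $(X,Y)$ solve \eqref{provveq:1} on $[t,t+\epsilon]$ started from an $\calf_t$-measurable $(\eta,\xi)$ with $\P_{\eta,\xi}=\mu$. By the DPP \eqref{DPPv},
\[
\varphi(t,\mu)=v(t,\mu)\ge \E\Big[\int_t^{t+\epsilon}\langle X_r,f(Y_r,\Gamma(\P_{X_r,Y_r,\alpha_r}),\alpha_r)\rangle\,dr\Big]+v(t+\epsilon,\P_{X_{t+\epsilon},Y_{t+\epsilon}})\ge \text{(same with $\varphi$ in place of $v$)}.
\]
Applying Proposition \ref{Itogeneral} to $\varphi(t+\epsilon,\P_{X_{t+\epsilon},Y_{t+\epsilon}})-\varphi(t,\mu)$, dividing by $\epsilon$, and letting $\epsilon\downarrow0$ — using Corollary \ref{continizi} together with the continuity statements in the definition of $\tilde C^{1,2}$ and dominated convergence to pass the limit through the $\E$ and the time integral — yields
\[
0\ge \partial_t\varphi(t,\mu)+\int_{\R^N\times\R^d}\big[\call^{a}\varphi(t,x,y,\pi^a)+\langle x,f(y,\Gamma(\pi^a),a)\rangle\big]\,\mu(dx\,dy),
\]
where $\pi^a=\delta_a(da)\,\mu(dx\,dy)$. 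Taking the supremum over constant $a\in A$ — and observing that by a standard argument the supremum over all $\pi\in\cald$ with $\pi_{12}=\mu$ is achieved as a limit of measures of product-with-feedback type, hence reduces to a pointwise supremum in the integrand that is itself attained along piecewise-constant approximations — gives the required inequality $-\partial_t\varphi(t,\mu)-\sup_{\pi:\pi_{12}=\mu}\int[\call^{a}\varphi+\langle x,f(y,\Gamma(\pi),a)\rangle]\,\pi\le 0$ with the reversed sign, i.e.\ the supersolution inequality $\ge 0$. (Here one must be slightly careful: the sup over $\pi$ involves the joint law of $(X,Y,\alpha)$, so one uses that for any feedback $a(x,y)$ one may take $\alpha_s=a(X_s,Y_s)$ near time $t$; by continuity of the coefficients this recovers the full supremum in the limit.)

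\textbf{Subsolution property.} Now let $\varphi\in\tilde C^{1,2}$ with $(v-\varphi)(t,\mu)=\max(v-\varphi)$, normalized so $\varphi\ge v$ with equality at $(t,\mu)$. Arguing by contradiction, suppose the subsolution inequality fails, so that for some $\theta>0$,
\[
-\partial_t\varphi(t,\mu)-\sup_{\pi:\pi_{12}=\mu}\int\big[\call^{a}\varphi(t,x,y,\pi)+\langle x,f(y,\Gamma(\pi),a)\rangle\big]\,\pi>\theta.
\]
By the continuity requirements in the definition of $\tilde C^{1,2}$, the continuity of $\Gamma,\Gamma_1,h,\sigma,f$, and the continuity of $s\mapsto\P_{X_s,Y_s}$ in $\tilde\calw$ (a consequence of the moment bounds \eqref{stap}--\eqref{stapY} and Corollary \ref{continizi}), this strict inequality persists on a short interval: there is $\epsilon>0$ such that for every admissible $\alpha\in\cala$ and every $s\in[t,t+\epsilon]$,
\[
\partial_t\varphi(s,\P_{X_s,Y_s})+\E\big[\call^{\alpha_s}\varphi(s,X_s,Y_s,\P_{X_s,Y_s,\alpha_s})\big]+\E\big[\langle X_s,f(Y_s,\Gamma(\P_{X_s,Y_s,\alpha_s}),\alpha_s)\rangle\big]\le -\tfrac{\theta}{2}.
\]
Integrating the It\^o formula \eqref{itosolution} for $\varphi$ over $[t,t+\epsilon]$ and rearranging,
\[
\varphi(t,\mu)\ge \E\Big[\int_t^{t+\epsilon}\langle X_s,f(Y_s,\Gamma(\P_{X_s,Y_s,\alpha_s}),\alpha_s)\rangle\,ds\Big]+\varphi(t+\epsilon,\P_{X_{t+\epsilon},Y_{t+\epsilon}})+\tfrac{\theta\epsilon}{2}
\]
for all $\alpha\in\cala$. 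Since $\varphi\ge v$ and $\varphi(t,\mu)=v(t,\mu)$, this yields
\[
v(t,\mu)\ge \E\Big[\int_t^{t+\epsilon}\langle X_s,f(\cdots)\rangle\,ds\Big]+v(t+\epsilon,\P_{X_{t+\epsilon},Y_{t+\epsilon}})+\tfrac{\theta\epsilon}{2}
\]
for every admissible $\alpha$, contradicting the dynamic programming principle \eqref{DPPv} (which gives equality when taking the sup over $\alpha$). This contradiction proves the subsolution inequality, and together with the supersolution property and the boundary condition, $v$ is a viscosity solution of \eqref{HJB}.

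\textbf{Main obstacle.} The delicate point is the passage to the limit $\epsilon\downarrow0$ in the supersolution step and, conversely, the uniformity in $\alpha\in\cala$ of the strict inequality on $[t,t+\epsilon]$ in the subsolution step: both require that $s\mapsto\P_{X_s,Y_s}$ converges to $\mu$ in $\tilde\calw$ \emph{uniformly over all admissible controls}, which is exactly what Corollary \ref{continizi} and the a priori bounds \eqref{stap}--\eqref{stapY} provide, combined with the local-uniform continuity built into the definition of $\tilde C^{1,2}$ and Remark \ref{unifcontpartialv} (the relevant laws stay in a compact subset of $\cald_{p,r}$). A secondary subtlety is handling the supremum over $\pi\in\cald$ with prescribed marginal $\pi_{12}=\mu$ rather than a pointwise supremum over $a$: one must verify that this supremum is realized (or approximated) by conditional laws $\P_{\alpha_s\,|\,X_s,Y_s}$ of admissible controls, i.e.\ that every $\pi$ with $\pi_{12}=\mu$ arises, up to approximation, from an $\F$-predictable control applied at time $t$ to the state $(\eta,\xi)$ with law $\mu$ — this is where the auxiliary randomization variable $U$ from Assumption (\nameref{HpU}) is used.
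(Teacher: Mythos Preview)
Your overall strategy (DPP plus the It\^o formula of Proposition \ref{Itogeneral}, then pass to the limit using the continuity built into $\tilde C^{1,2}$ and Corollary \ref{continizi}) matches the paper's. But the execution of the supersolution step is muddled, and the first version you state is actually wrong.

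\medskip

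\textbf{Supersolution.} You begin with a constant control $\alpha_s\equiv a$ for a fixed $a\in A$, obtain the inequality for $\pi^a=\mu\otimes\delta_a$, and then assert that ``the supremum over all $\pi$ with $\pi_{12}=\mu$ \dots\ reduces to a pointwise supremum in the integrand.'' That claim is incorrect here: the integrand $\call^a\varphi(t,x,y,\pi)+\langle x,f(y,\Gamma(\pi),a)\rangle$ depends on the \emph{global} measure $\pi$ through $\Gamma(\pi)$, so the Hamiltonian is genuinely a supremum over couplings $\pi$, not a pointwise $\sup_a$ inside the integral. Deterministic feedbacks $a(x,y)$ only produce $\pi$'s with Dirac conditionals, still not the general case. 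You eventually identify the right fix in your ``Main obstacle'' paragraph (use the auxiliary variable $U$ to realize arbitrary $\pi$), but you do not carry it out. The paper does exactly this, and in one line: given any $\pi\in\cald$ with $\pi_{12}=\mu$, choose $\calf_0$-measurable $(\eta,\xi,\alpha)$, constructed as functions of $U$ alone, with joint law $\pi$; use the \emph{constant-in-time random} control $\alpha_s\equiv\alpha$. Then $\P_{X_t,Y_t,\alpha_t}=\pi$ exactly, and the DPP/It\^o argument yields the inequality directly for that $\pi$; taking the sup over $\pi$ finishes. No approximation, no reduction to constants or feedbacks.

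\medskip

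\textbf{Subsolution.} Here you argue by contradiction, needing that the strict inequality at $(t,\mu)$ persists on $[t,t+\epsilon]$ uniformly over all admissible $\alpha$. This can be made to work, but it requires an upper-semicontinuity statement for the running Hamiltonian along $s\mapsto(\P_{X_s,Y_s},\P_{X_s,Y_s,\alpha_s})$, uniformly in $\alpha$, which you do not prove. The paper takes the direct route: from the DPP pick $\epsilon$-optimal controls $\alpha^\epsilon$, apply It\^o, and then bound four explicit remainder terms $\rho_1,\dots,\rho_4$ (coming from replacing $(s,X_s^\epsilon,Y_s,\P_{X_s^\epsilon,Y_s,\alpha_s^\epsilon})$ by $(t,\eta,\xi,\P_{\eta,\xi,\alpha_s^\epsilon})$), showing each tends to $0$ uniformly in $\alpha^\epsilon$ via Corollary \ref{continizi} and Assumptions (\nameref{HphLipschitz}), (\nameref{HpUniformContinuityh}), (\nameref{HpUniformContinuityfg}). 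What remains is bounded above by the Hamiltonian evaluated at $\pi=\P_{\eta,\xi,\alpha_s^\epsilon}$, which has marginal $\mu$, and hence by the sup over such $\pi$. This direct computation is what your contradiction argument would have to reproduce anyway, so the paper's route is both shorter and more transparent.
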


\begin{proof}
    The required arguments are classical: for the case of McKean-Vlasov systems one can consult for instance  \cite{PhamWei1}
or \cite{CosGozKhaPhaRos23}. We postpone the proof to the appendix, where we show how to deduce the required conclusion under our assumptions.  
\end{proof} 
 
\begin{Remark}
If a function $w\in\tilde C^{1,2}([0,T]\times \cald_{p,r})$
is a viscosity solution to  \eqref{HJB}, it follows from the previous definition  that   $w$ is a classical solution, as one can take $\varphi=w$ as a test function. In particular, if the value function $v$ belongs to $\tilde C^{1,2}([0,T]\times \cald_{p,r})$ then it is a classical solution to  \eqref{HJB}. \qed
\end{Remark}

 \section{Examples and applications} \label{SectionExamples}

In this section we first exhibit some examples of functions satisfying Assumptions (\nameref{HphLipschitz}) and (\nameref{HpUniformContinuityh}). In spite of their unusual appearance,  these assumptions are not too restrictive and they are in fact satisfied by   large classes of functions of probability measures. We conclude this section with a financial application to an optimal liquidation trading problem.

\begin{Example}
    \label{ExampleCylindricalFct}
    Let \(h\) be a cylindrical function, i.e.
    \begin{equation} \label{EqCylindricalFunction}
     h(y,a,\nu,i):=F \bigg( y,a, \int_{\R^d \times A} \psi_1(y,a) \,\nu(dy,da), \ldots, \int_{\R^d \times A} \psi_k(y,a)\, \nu(dy,da), i \bigg),
    \end{equation}
    where \(F: \R^d \times A \times \R^k \times S \rightarrow \R^d\) is a bounded function which is Lipschitz in \(\R^k\) uniformly with respect to \((y,a,i) \in \R^d \times A \times S\) and, for all \(j=1, \ldots, k\),  \(\psi_j:\R^d\times A\to\R\) is measurable and satisfies a polynomial growth condition of order \(m \geq 0\), i.e.\ \[ |\psi_j(y,a)| \leq C(1+ |y| + d(a,a_0))^m \text{ for all } j=1,\ldots,k. \] 
    Then, Assumption (\nameref{HphLipschitz}) is satisfied with \(\chi(z)=z^m\), \(r \geq 2m\), \(q \geq m\) for any \(p \geq 2\). 
    Moreover, assume that \(F\) is also uniformly continuous in \(y \in \R^d\)  uniformly with respect to \((a,i,z) \in A \times S \times \R^k\), i.e.\ there exists a modulus of continuity \(\omega_F\) such that
    \[ |F(y,a,z,i)-F(y',a,z,i)| \leq \omega_F(|y-y'|) \]
    for all \(y,y' \in \R^d\), \(a \in A\), \(i \in S\) and \(z \in \R^k\).
    Furthermore, let the function \(\psi_j\) be locally Lipschitz for \(j=1,\ldots,k\), i.e.\ there exist a constant \(C>0\) and an exponent \(v \geq 0\) such that
    \begin{equation} \label{LocalLipschitzConditionpsij}
        |\psi_j(y,a)-\psi_j(y',a')| \leq C(|y-y'| +d(a,a')) (1+|y|+|y'|+d(a,a_0)+d(a',a_0))^v
    \end{equation}
    for all \(y,y' \in \R^d\), \(a,a' \in A\) , \(j=1, \ldots, k\).
    Then,  Assumptions (\nameref{HpUniformContinuityh}) are satisfied with \(\omega_0=\omega_F\), \(\omega_q(z)=Cz\) for some constant \(C\), \(\chi(z)=z^v\) provided that \(v \leq q-1\).

    Notice that the class of functions satisfying \eqref{EqCylindricalFunction} includes two notable examples. The first one is the class of functions of the mean of the form
    \begin{equation} \label{ExampleMean}
        h(\nu):=F \bigg( \int_{\R^d \times A} (y,a)\; \nu(dy,da) \bigg),
    \end{equation}
    where \(F\) is a bounded and Lipschitz function,  which is well defined when \(A\) is a  bounded subset of a Euclidean space \(\R^D\). These functions satisfy Assumption (\nameref{HphLipschitz}) for \(\chi(z)=z\), \(r \geq 2\), \(q \geq 1\). Moreover, Assumptions (\nameref{HpUniformContinuityh}) are satisfied with \(\omega_q(z)=L_F z\), where \(L_F\) is the Lipschitz constant of the function \(F\).
    
    The second remarkable subset of functions satisfying \eqref{EqCylindricalFunction} is given by the functions of the \(\gamma\)-th moment of their measure argument, i.e.\
    \begin{equation} \label{ExampleMoment}
        h(\nu):= F \bigg( \int_{\R^d \times A} (|y| + d(a,a_0))^{\gamma} \nu(dy,da) \bigg)
    \end{equation}
    for some \(\gamma \geq 1\),  where \(F\) is bounded and Lipschitz. These functions satisfy Assumption (\nameref{HphLipschitz}) for \(\chi(z)=z^k\), \(r \geq 2\gamma\), \(q \geq \gamma \). Moreover, as \(k=1\) and \(\psi(y,a)=(|y|+d(a,a_0))^\gamma\) is a locally Lipschitz function of the form \eqref{LocalLipschitzConditionpsij} with \(v=\gamma-1\), Assumptions (\nameref{HpUniformContinuityh}) are satisfied with \(\omega_q(z)=C z\) and \(\chi(z)=z^{\gamma-1}\) and \(q \geq \gamma\).
\end{Example}

\begin{Example}
    \label{ExampleCovariance}
    Let \(h\) be a function of the covariance of a probability measure $\nu$, i.e.\ \(h\) depends on the first marginal $\nu_1$ on $\R^d$ of \(\nu\) and it is defined by
\begin{equation} \label{EqExampleCovariance}
        h(\nu):=F \bigg( \int_{\R^d } zz^T \nu_1(dz) - \bigg( \int_{\R^d } z \nu_1(dz) \bigg) \bigg( \int_{\R^d } z \nu_1(dz) \bigg)^T\bigg),
    \end{equation}
where \(F\) is a bounded and Lipschitz function. Then,  Assumption (\nameref{HphLipschitz}) is satisfied with \(q \geq 2\), \(r \geq 4\) and \(\chi(z)=z^2\) (or, in general, \(\chi\) polynomial function). Moreover, Assumptions (\nameref{HpUniformContinuityh}) 
are satisfied with \(\omega_q(z)=C z\) for some constant \(C\) and \(\chi\) polynomial function for \(q \geq 2\).
\end{Example}
We underline that the functions in Example \ref{ExampleCylindricalFct} and Example \ref{ExampleCovariance} can also be used as the functions $f$ or $g$ in the gain functional, as they satisfy also Assumptions (\nameref{HpUniformContinuityfg}), even if we remove the boundedness assumption on $F$ and we only require that \(|F(y,a,i,0)| \leq C\) for any \(y \in \R^d, a \in A, i \in S\).

\begin{Example}[An optimal liquidation trading problem]
We consider the example of optimal liquidation problem with permanent price impact induced by trade crowding as in \cite{CarLeh18}:  the inventory of the trader that has to liquidate a certain number of stock shares $I_0$ $>$ $0$ 
at a trading  rate $\alpha$ $=$ $(\alpha_t)_t$,  is driven by  
\begin{align}
d I_t &= \; - \alpha_t dt + \eps d B_t^1,     
\end{align}
where $B^1$ is a Brownian motion on $(\Omega,\Fc,\Q)$, and $\eps$ is a small noise around the inventory speed. The transaction price of the 
stock shares is given by 
\begin{align}
P_t &= \; P_t^0 -  \nu  \int_0^t \E_{\Q}[\alpha_s] d s,      
\end{align}
where $P^0$ is the reference stock price in absence of trading, and $\nu$ $>$ $0$ is the positive parameter for the 
permanent price impact due to the average trading of large number of market participants. We assume that $P^0$ is governed by 
\begin{align}
d P_t^0 &= \; M_t d t + \gamma d B_t^2,    
\end{align}
where $M$ is the unobserved drift of the stock price, modeled as a regime switching Markov chain (e.g. with two states: bearish or bullish), and $\gamma$ is the volatility with $B^2$ a Brownian motion independent of $B^1$.  The goal of the trader is to maximize the reward function 
\begin{align}
J(\alpha) &= \; \E_{\Q} \Big[ \int_0^T \alpha_t ( P_t - \eta \alpha_t) d t - \theta I_T^2 \Big],    
\end{align}
where $\eta$ $\geq$ $0$ is a positive constant parameter representing the linear temporary price impact, and $\theta$ $\geq$ $0$ is a parameter for penalizing the remaining inventory at maturity $T$. 

This example fits into our partially observed mean-field control problem \eqref{stateeq1}-\eqref{rewardfunz1} with observation process $Y$ $=$ $(I,P)$ and 
\begin{align}
\sigma \; = \; \Big( \begin{array}{cc}
                      \eps & 0 \\
                      0 &  \gamma 
                     \end{array}  \Big), & \qquad 
h(Y_t,\Q_{Y_t,\alpha_t},\alpha_t,M_t) \; = \;   \Big( \begin{array}{c}
                      - \frac{1}{\eps} \alpha_t   \\
                      \frac{1}{\gamma} \big(  M_t - \nu \E_{\Q}[\alpha_t] \big) 
                     \end{array}  \Big), \\
f(Y_t,\Q_{Y_t,\alpha_t},\alpha_t,M_t) \; = \;  \alpha_t (P_t - \eta \alpha_t), & \qquad 
g(Y_T,\Q_{Y_T},M_T) \; = \; - \theta I_T^2. 
\end{align}
\end{Example}

\appendix
 
\section{Appendix} \label{AppendixProofs}

In this section we have postponed the proofs of several results, for better readibility of the main text.

\paragraph{Proof of Corollary \ref{flowcor}.}

    The thesis is obvious for the first component \(Y\): it follows from the flow property of solutions to classical SDEs. For the second component, call for simplicity 
    \[ K_r:=X_r^{t,\eta,\xi,\alpha}  \quad Y_r:=Y_r^{t,\xi} \quad \text{for all } r \in [s,T]. \]
    Then, by definition \(K\) satisfies
    \begin{align*}
        K_r=X_r^{t,\eta,\xi,\alpha}&= \eta + \int_t^r \Lambda^T X_u^{t,\eta,\xi,\alpha} du \\
        &+ \int_t^r diag(X_u^{t,\eta,\xi,\alpha}) h(Y_u, \E[\langle X_u^{t,\eta,\xi,\alpha}, 1_N \rangle| (Y_u,\alpha_u)=(y,a) ] \P_{Y_u,\alpha_u}(dy,da), \alpha_u) dW_u \\
        &=\eta + \int_t^s \Lambda^T X_u^{t,\eta,\xi,\alpha} du + \int_s^r \Lambda^T X_u^{t,\eta,\xi,\alpha} du \\
        &+ \int_t^s diag(X_u^{t,\eta,\xi,\alpha}) h(Y_u, \E[\langle X_u^{t,\eta,\xi,\alpha}, 1_N \rangle| (Y_u,\alpha_u)=(y,a) ] \P_{Y_u,\alpha_u}(dy,da), \alpha_u) dW_u \\
        &+\int_s^r diag(X_u^{t,\eta,\xi,\alpha}) h(Y_u, \E[\langle X_u^{t,\eta,\xi,\alpha}, 1_N \rangle| (Y_u,\alpha_u)=(y,a) ] \P_{Y_u,\alpha_u}(dy,da), \alpha_u) dW_u .
    \end{align*}
    Since \(X_u^{t,\eta,\xi,\alpha}=K_u\) for \(u \in [s,T]\),
    \begin{align*}
        K_r&=  X_s^{t,\eta,\xi,\alpha} + \int_s^r \Lambda^T K_u du  \\
        &+ \int_s^r diag(K_u) h(Y_u, \E[\langle K_u, 1_N \rangle| (Y_u,\alpha_u)=(y,a) ] \P_{Y_u,\alpha_u}(dy,da), \alpha_u) dW_u. 
    \end{align*}
    As a consequence, \(K\) solves the following equation:
    \begin{equation} \label{ProofFlowPPt1}
        \begin{cases}
            dK_r = \Lambda^T K_r dr + diag(K_r) h(Y_r, \E[\langle K_r, 1_N \rangle| (Y_r,\alpha_r)=(y,a) ] \P_{Y_r,\alpha_r}(dy,da), \alpha_r) dW_r, \quad r \in [s,T] &\\
            K_s=X_s^{t,\eta,\xi,\alpha}.
        \end{cases}
    \end{equation}
    Now notice that the process \(\tilde{K}:=X^{s,X_s^{t,\eta,\xi,\alpha},Y_s^{t,\xi}, \alpha}\) is the unique solution to the following equation:
    \begin{equation} \label{ProofFlowPPt2}
        \begin{cases}
            d\tilde{K}_r = \Lambda^T \tilde{K}_r dr + diag(\tilde{K}_r) h(Y_r^{s,Y_s^{t,\xi}}, \E[\langle \tilde{K}_r, 1 \rangle| (Y_r^{s,Y_s^{t,\xi}},\alpha_r)=(y,a) ] \P_{Y_r^{s,Y_s^{t,\xi}},\alpha_r}(dy,da), \alpha_r) dW_r, \, r \in [s,T] &\\
            \tilde{K}_s=X_s^{t,\eta,\xi,\alpha}.
        \end{cases}
    \end{equation}
    Recalling that \(Y_r^{t,\xi}=Y_r^{s,Y_s^{t,\xi}}\) for \(r \in [s,T]\), we observe that \eqref{ProofFlowPPt1} and \eqref{ProofFlowPPt2} are the same stochastic differential equation on \([s,T]\) with the same initial condition; by Theorem \ref{existstateeqprovv}, we conclude that
    \[ X_r^{s,X_s^{t,\eta,\xi,\alpha},Y_s^{t,\xi}, \alpha} = X_r^{t,\eta,\xi,\alpha} \quad \text{for all } r \in [s,T].  
    \]
\qed
 
\medskip

\paragraph{Proof of Proposition \ref{AeAW}.}   

Firstly notice that, by definition of the filtrations \(\F\) and \(\F^W\) (see Assumption (\nameref{HpU})), it is clear that \(\cala^W \subseteq \cala\). As a consequence,  
\[ \sup_{\alpha\in\cala} J(0,\eta,\xi,\alpha) \geq \sup_{\alpha\in\cala^W} J(0,\eta,\xi,\alpha). \]
We thus only need to prove the opposite inequality.

\textit{Step 1:}  We will prove that
\[ \sup_{\alpha\in\cala} J(t,\eta,\xi,\alpha) \leq \sup_{\alpha\in\cala^W} J(t,\eta,\xi,\alpha) \]
for any fixed   \(t > 0\). Choose any \(\alpha \in \cala\). By a monotone class argument (arguing e.g.\ as in Proposition 10 in \cite{Claisse} or in Lemma 2.3 in \cite{Rudà} for details),   there exists a measurable function
\[ \underline{\alpha}: [0,T] \times \calc([0,T]; \R^d) \times [0,1] \rightarrow A  \]
such that, for \(\P\)-a.s. \(\omega \in \Omega\), \(\alpha_s(\omega)=\underline{\alpha}(s, W_{s \wedge .}(\omega),U(\omega))\), where \(W_{s \wedge .}\) denotes the Brownian trajectory associated to \(\omega\) stopped at time \(s\). Now construct the process \(\tilde{W}=(\tilde{W}_s)_{s \in [0,T]}\) defined as follows:
\begin{equation*}
    \tilde{W}_s:= \begin{cases}
        \sqrt{2}(W_{\frac{s+t}{2}}-W_{\frac{t}{2}}) \qquad s \in [0,t) &\\
        \sqrt{2}(W_{t}-W_{\frac{t}{2}})+W_s-W_t \qquad s \in [t,T].&
    \end{cases}
\end{equation*}
It can be verified that \(\tilde{W}\) is a standard Brownian motion on \([0,T]\)  with respect to the filtration \((\calf_{\frac{s+t}{2}}^W)_{s \in [0,T]}\); it is also independent of \(\calf_{\frac{t}{2}}^W\). Moreover, define the standard gaussian random variable \(Z:= \frac{2}{\sqrt{t}}W_{\frac{t}{4}}\); denote by \(F_Z\) its cumulative distribution function and observe that \(\tilde{U}:=F_Z(Z)\) is a uniform random variable supported in \([0,1]\) and measurable with respect to \(\calf_{\frac{t}{4}}^W \subseteq \calf_{\frac{t}{2}}^W\). Therefore, \((\tilde{W}, \tilde{U})\) is another pair of a Brownian motion on \([0,T]\) and an independent uniform random variable. Finally, construct the control \(\tilde{\alpha}\) defined as follows:
\[ \tilde{\alpha}_s(\omega):= \underline{\alpha}(s, \tilde{W}_{s \wedge \cdot}(\omega), \tilde{U}(\omega)). \]
\(\tilde{\alpha}\) does not depend on the additional source of randomness \(U\); in particular, it belongs to \(\cala^W\). Moreover, it is clear that
\[ \call(W, U, \alpha)= \call(\tilde{W}, \tilde{U}, \tilde{\alpha}), \]
where $\call$ denotes the law under $\P$.
As the initial conditions \((\xi, \eta)\) are deterministic, they are both \(\sigma(U)\)-measurable and \(\sigma(\tilde{U})\)-measurable. Then it follows  that 
\[ \call(X^{t,\eta,\xi,\alpha}, Y^{t,\xi}, B, \alpha) = \call(X^{t,\eta,\xi,\tilde{\alpha}}, Y^{t,\xi}, \tilde{B}, \tilde{\alpha})
\]
Therefore, \(J(t,\eta,\xi,\alpha)=J(t,\eta,\xi,\tilde{\alpha})\). We conclude that for any \(\alpha \in \cala\) we can find a \(\tilde{\alpha} \in \cala^W\) such that \(J(t,\eta,\xi,\alpha)=J(t,\eta,\xi,\tilde{\alpha})\), which implies that \(\sup \limits_{\alpha \in \cala} J(t,\eta,\xi,\alpha) \leq \sup \limits_{\alpha \in \cala^W} J(t,\eta,\xi,\alpha).  \)

\textit{Step 2:} 
In Step 1 we have proved that
\[\sup_{\alpha\in\cala} J(t,\eta,\xi,\alpha)=\sup_{\alpha\in\cala^W} J(t,\eta,\xi,\alpha)
    \]
holds for any $t>0$. Using the continuity condition \eqref{rightcontatzero} and the fact that $\cala^W\subset\cala$, we can pass to the limit as $t\to 0$ and obtain the conclusion. We notice that 
\eqref{rightcontatzero} holds under the Assumptions (\nameref{HpA1})-(\nameref{HpUniformContinuityfg}): this will be proved below, see Theorem \ref{Continuityofvinciandtime}-(ii), in particular the proof that the right-hand side of \eqref{destracontJ} tends to zero. \qed

\medskip

\paragraph{Proof of Proposition \ref{stabtateeqprovv}.}  

In this proof we denote by   $C$ any generic constant which depends on the assumptions and by $K$ any constant that also depends on $L,p,r$, but the values of $C$ and $   K$  may change from line to line. We will also freely use
  the a priori estimates 
\begin{align}
    \label{stapdoppia}
    \|X_s\|_p \le C\, \|\eta\|_p,\quad
\|X_s'\|_p \le C\, \|\eta'\|_p,
\quad\|Y_s\|_r \le C\, (1+\|\xi\|_r),
\quad
\|Y_s'\|_r \le C\,(1+ \|\xi'\|_r),
\quad s\in[t,T],
\end{align}
which follow from
 Theorem \ref{existstateeqprovv} (compare
    \eqref{stap}-\eqref{stapY}). 

Due to the Lipschitz character of $\sigma$, the estimate \eqref{stabilstateeqY} is standard.

Let us denote \[
\bar\eta=\eta-\eta',\quad
\bar\xi=\xi-\xi',
\quad
\bar X_s=X_s-X_s',
\quad \bar Y_s=Y_s-Y_s'.
\]
Subtracting the equations for $X$ and $X'$ we obtain
\begin{align}  
    d\bar X_s =&\Lambda^T \bar X_s  \,ds + I_s
    \,dW_s,\qquad
  \bar  X_t=\bar\eta.
\end{align}
where we set
\begin{align*}
    I_s=  &\;
diag(X_s)\,h \Big(Y_s,\E\,\left[\langle X_s,1_N\rangle\,|\, (Y_s,\alpha_s)=(y,a)\right]\,\P_{Y_s,\alpha_s}(dy\,da),\alpha_s\Big)
   \\&
    -diag(X_s')\,h \Big(Y_s',\E\,\left[\langle X_s',1_N\rangle\,|\, (Y_s',\alpha_s)=(y,a)\right]\,\P_{Y'_s,\alpha_s}(dy\, da),\alpha_s\Big).
  \end{align*}
By the Burkholder-Davis-Gundy and the H\"older inequality we have, for $s\in [t,T]$,
\begin{align}\nonumber
\E\,\sup_{u\in [t,s]}|\bar X_u|^p& 
\le C\, \E|\bar\eta|^p+
C\, \E\,\left[\left(\int_t^s |\Lambda^T\bar X_u|\,du\right)^p\right]
+ C\, \E\,\left[\left(\int_t^s |I_u|^2\,du\right)^{p/2}\right]
\\\label{BDGsuXbar}&
\le C\, \E|\bar\eta|^p+
 C\,  \int_t^s \E\,\left[| \bar X_u|^p\right]\,du 
+ C\,  \int_t^s \E\,\left[|I_u|^p\right]\,du.
\end{align}
  Adding and subtracting terms we have $I_s= I_s^1+I_s^2+I_s^3+I_s^4$, where
\begin{align*}
&    I_s^1= diag(\bar X_s)\,h \Big(Y_s,\E\,\left[\langle X_s,1_N\rangle\,|\, (Y_s,\alpha_s)=(y,a)\right]\,\P_{Y_s,\alpha_s}(dy\,da),\alpha_s\Big),
\\&
I_s^2=
diag(X_s')\Big\{h \Big(Y_s,\E\,\left[\langle X_s,1_N\rangle\,|\, (Y_s,\alpha_s)=(y,a)\right]\,\P_{Y_s,\alpha_s}(dy\,da),\alpha_s\Big)
\\&
\qquad\qquad\qquad\qquad\qquad\qquad
-h \Big(Y_s',\E\,\left[\langle X_s,1_N\rangle\,|\, (Y_s,\alpha_s)=(y,a)\right]\,\P_{Y_s,\alpha_s}(dy\,da),\alpha_s\Big)
\Big\},
\\&
    I_s^3=
diag(X_s')\Big\{h \Big(Y_s',\E\,\left[\langle X_s,1_N\rangle\,|\, (Y_s,\alpha_s)=(y,a)\right]\,\P_{Y_s,\alpha_s}(dy\,da),\alpha_s\Big)
\\&
\qquad\qquad\qquad\qquad\qquad\qquad
-h \Big(Y_s',\E\,\left[\langle X_s',1_N\rangle\,|\, (Y_s,\alpha_s)=(y,a)\right]\,\P_{Y_s,\alpha_s}(dy\,da),\alpha_s\Big)\Big\},
\\&
    I_s^4=
diag(X_s')\Big\{
h\Big(Y_s',\E\,\left[\langle X_s',1_N\rangle\,|\, (Y_s,\alpha_s)=(y,a)\right]\,\P_{Y_s,\alpha_s}(dy\,da),\alpha_s\Big)
\\&
\qquad\qquad\qquad\qquad\qquad\qquad
-h \Big(Y_s',\E\,\left[\langle X_s',1_N\rangle\,|\, (Y_s',\alpha_s)=(y,a)\right]\,\P_{Y'_s,\alpha_s}(dy\,da),\alpha_s\Big)
\Big\}.
\end{align*}
We proceed to estimate these terms. 

\medskip

\emph{Estimate of $I^1$, $I^2$.}
Using the boundedness of $h$ and  \eqref{unifomega0}:
\begin{align*}
    |I_s^1|\le C\, |\bar X_s|, \qquad
    |I_s^2|\le C\, | X'_s|\,\omega_0(|Y_s-Y'_s|) .
\end{align*}
This implies
\begin{align*}
 \E\,\left[   |I_s^1|^p\right] \le C\,\E\,\left[ |\bar X_s|^p\right], \qquad
  \E\,\left[  |I_s^2|^p\right]\le C\, \E\,\left[| X'_s|^p \,\omega_0(|Y_s-Y'_s|)^p\right].
\end{align*}

\medskip

\emph{Estimate of  $I^3$.}
Using Assumption (\nameref{HphLipschitz}), with $\rho= \P_{Y_s,\alpha_s}$, we have
\begin{align*}
&|I_s^3|^2\le C\, | X'_s|^2\int_{\R^d
\times A}\Big|\E\,\left[\langle X_s,1_N\rangle\,|\,(Y_s,\alpha_s)=(y,a)\right]-
\E\,\left[\langle X_s',1_N\rangle\,|\, (Y_s,\alpha_s)=(y,a)\right]
\Big|^2\,\P_{Y_s,\alpha_s}(dy\,da)
\\&
\cdot\Big( 1+ \chi(\|\P_{Y_s,\alpha_s}\|_{\calw_r} )+\chi(\|\E\,\left[\langle X_s,1_N\rangle\,|\,(Y_s,\alpha_s)=\cdot \right]\P_{Y_s,\alpha_s}\|_{\calw_q})
\\&\qquad\qquad 
+\chi(\|\E\,\left[\langle X_s',1_N\rangle\,|\,(Y_s,\alpha_s)=\cdot  \right]\P_{Y_s,\alpha_s}\|_{\calw_q})
\Big).
\end{align*}
In this formula, the integral over $\R^d\times A$ is equal to
\begin{align*}
&
\int_{\R^d\times A}|\E\,\left[\langle \bar X_s,1_N\rangle\,|\,(Y_s,\alpha_s)=(y,a)\right]
|^2\,\P_{Y_s,\alpha_s}(dy\, da)
\\&\quad =\E\,\left[|\E\,\left[\langle \bar X_s,1_N\rangle\,|\,(Y_s,\alpha_s)\right]
|^2
\right]
       \le\E\,
\left[|\langle \bar X_s,1_N\rangle|^2\right]
\le \E\,
\left[| \bar X_s |^2\right].
\end{align*}
We proceed to estimate the remaining terms. We have
\begin{align}
\label{legge riflimitata}
\|\P_{Y_s,\alpha_s}\|_{\calw_r}\le C\,\left(\E\,\left[
|Y_s|^r+1 
\right] \right)^{1/r}\le C\,(1+\|\xi\|_r) \le K,
\end{align}
where the first inequality follows from the fact that the metric on the action space $A$ is bounded, the second one from \eqref{stapdoppia}.
Similarly, recalling that $q\le r(1-p^{-1})$
and using also the H\"older  inequality, \begin{align}\nonumber
\|\E\,\left[\langle X_s,1_N\rangle\,|\,(Y_s,\alpha_s)=\cdot \right]\,\P_{Y_s,\alpha_s}\|_{\calw_q}&\le C\,\left(\E\,\left[|X_s|(
|Y_s|^q+1 )
\right] \right)^{1/q}
\le C\, \|X_s\|_p^{1/q}(1+\|Y_s\|_r)
\\\label{leggidenslimituno}
& \le C\, \|\eta\|_p^{1/q}(1+\|\xi\|_r)\le K.
\end{align}
In a similar way,
\begin{align}\label{leggidenslimitdue}
\|\E\,\left[\langle X_s',1_N\rangle\,|\,(Y_s,\alpha_s)=\cdot \right]\,\P_{Y_s,\alpha_s}\|_{\calw_q}\le C\, \|\eta'\|_p^{1/q}(1+\|\xi\|_r)\le K,
\end{align}
and using \eqref{stapdoppia} once more we conclude that
\begin{align*}
 \E\,\left[   |I_s^3|^p\right]
 \le
K\, (\E\,\left[   | X_s'|^p\right]) (\E\,\left[   |\bar X_s|^2\right])^{p/2}
\le
K\, \| \eta'\|_p^p (\E\,\left[   |\bar X_s|^2\right])^{p/2}
 \le
K\, \E\,\left[   |\bar X_s|^p\right].
\end{align*}

\medskip

\emph{Estimate of  $I^4$.}
By \eqref{unifomegaq} we  have $ |I_s^4|\le C\, | X'_s|\,\omega_q(\calw_q(\mu,\nu))\,\left(1+\chi(\|\mu\|_{\calw_q})+\chi(\|\nu\|_{\calw_q})\right)$, where we set for short
\begin{align*}
\mu(dy\,da)=\E\,\left[\langle X_s',1_N\rangle\,|\, (Y_s,\alpha_s)=(y,a)\right]\,\P_{Y_s,\alpha_s}(dy\,da), 
\\
\nu(dy\,da)=\E\,\left[\langle X_s',1_N\rangle\,|\, (Y_s',\alpha_s)=(y,a)\right]\,\P_{Y_s',\alpha_s}(dy\,da).
\end{align*}
Proceeding as in \eqref{leggidenslimituno}-\eqref{leggidenslimitdue} we have $\|\mu\|_{\calw_q}+\|\nu\|_{\calw_q}\le K$. 
In order to estimate $\calw_q(\mu,\nu)$ we use the Kantorovich duality: we take $f_1,f_2:\R^d\times A\to\R$ bounded Lipschitz and satisfying
\[
f_1(y_1,a_1)+f_2(y_2,a_2)\le (|y_1-y_2|+d(a_1,a_2))^q, \qquad y_1,y_2\in\R^d,\;a_1,a_2\in A,
\]
and we compute
\begin{align*}
&\int_{\R^d\times A} f_1(y_1,a_1)\,\mu(dy_1\,da_1) +     \int_{\R^d\times A} f_2(y_2,a_2)\,\nu(dy_2\,da_2)
    \\&=
    \E\,\left[
f_1(Y_s,\alpha_s)\, \E\,\left[\langle X_s',1_N\rangle\,|\, Y_s,\alpha_s\right]\right]+
 \E\,\left[
f_2(Y'_s,\alpha_s)\, \E\,\left[\langle X_s',1_N\rangle\,|\, Y'_s,\alpha_s\right]\right]
 \\&=
\E\,\left[
(f_1(Y_s,\alpha_s)+f_2(Y_s',\alpha_s))\, \langle X_s',1_N\rangle\right]
\\&\le
\E\,\left[(
|Y_s-Y_s'|^q)\, \langle X_s',1_N\rangle\right]
\le
\E\,\left[|X_s'|\,
|Y_s-Y_s'|^q \right],
\end{align*}
Taking the supremum over $f_1,f_2$ it follows that
$\calw_q(\mu,\nu)^q\le 
\E\,\left[|X_s'|\,
|Y_s-Y_s'|^q\right]
$. By the H\"older  inequality, noting that $q p'\le r$, due to the condition $q\le r(1-p^{-1})$, we obtain
\begin{align*} 
\calw_q(\mu,\nu)\le \left(\E\,\left[|X_s'|
|Y_s-Y'_s|^q
\right] \right)^{1/q}
\le C\, \|X_s'\|_p^{1/q}\|Y_s-Y_s'\|_r.
\end{align*}
By
\eqref{stapdoppia} and \eqref{stabilstateeqY} we have
\begin{align}\label{stimaWq} 
\calw_q(\mu,\nu)
 \le K\, \|\eta'\|_p^{1/q}\|\xi-\xi'\|_r\le K\,  \|\xi-\xi'\|_r.
\end{align}
We  finally obtain
\begin{align*}
\E\,\left[|I_s^4|^p\right]\le K\,\E\,\left[ |X'_s|^p\right]\,\omega_q\left(K\, \|\xi-\xi'\|_r
\right)^p
\le K\, \omega_q\left(K\, \|\xi-\xi'\|_r
\right)^p.
\end{align*}

\medskip

Replacing the obtained estimates in \eqref{BDGsuXbar} we have
\begin{align*}
\E\,\sup_{u\in [t,s]}|\bar X_u|^p& 
\le K\, \E\left[|\bar\eta|^p\right]
+K\,  \int_t^s \E\,\left[| \bar X_u|^p\right]\,du
+
 K\,  \int_t^s \E\,\left[| X'_u|^p \,\omega_0(|Y_u-Y'_u|)^p\right]\,du
\\&
\quad + K\,    
\omega_q\left(K\, \|\xi-\xi'\|_r
\right)^p
\end{align*}
and 
\eqref{stabilstateeq} follows from  the Gronwall lemma.

To prove the final convergence result 
we apply 
 \eqref{stabilstateeqY}-\eqref{stabilstateeq} to the pairs 
$(X^n,Y^n)$ and  $(X,Y)$. We first obtain 
%\begin{align*}
$\E\, \Big[\sup_{s\in [t,T] }|Y_s^n-Y_s|^r\Big]\le C\, \|\xi^n-\xi\|_r^r\to 0$.
%\end{align*}
Next we have
\begin{align*}
\E\, \Big[\sup_{s\in [t,T] }| X_s^n-X_s|^p\Big]&\le K\, \bigg\{\|\eta^n-\eta\|_p^p
+ \omega_q\left(K\, \|\xi^n -\xi\|_r
\right)^p +
\int_t^T \E\,\left[| X_s|^p \,\omega_0(|Y^n_s-Y_s|)^p\right]\,ds
\bigg\}
\end{align*}
and it remains to prove that the last integral tends to zero as well. Suppose on the contrary that for some $\delta>0$ and some subsequence $\{n_k\}$ we had
\begin{align*}
\int_t^T \E\,\left[| X_s|^p \,\omega_0(|Y^{n_k}_s-Y_s|)^p\right]\,ds\ge\delta.
\end{align*}
Then we might extract a subsubsequence $\{n_{k_j}\}$ such that 
$Y_s^{n_{k_j}}\to Y_s$ a.s., uniformly in $s$,  for $j\to\infty$. Since $\omega_0$ is continuous at zero and bounded, and since 
$\E\,
\int_t^T |X_s|^p\,ds<\infty$, we would get a contradiction with the dominated convergence theorem.
\qed

\medskip

\paragraph{Proof of Proposition \ref{stabrewardJ}.} 

We will again use \eqref{stapdoppia} several times, as well as  the inequality 
$ \|X_s'\|_{p_1} \le C\, \|\eta'\|_{p_1}   $ which also follows from Theorem \ref{existstateeqprovv} (formula \eqref{stap}, with $p_1$ instead of $p$).

We first prove 
\eqref{Junifcontuno},
noting that $Y=Y'$, since they have a common starting point  $\xi$.
Adding and subtracting terms we have
\begin{align*}
J(t,\eta,\xi,\alpha)-
J(t,\eta',\xi,\alpha)=I^1+I^2+I^3+I^4,
\end{align*}
where
\begin{align*}
I^1&=
\E\bigg[\int_t^T \langle X_s -X'_s,f\Big(Y_s, \E\,\left[\langle X_s,1_N\rangle\,|\, (Y_s,\alpha_s)=(y,a)\right]\,\P_{Y_s,\alpha_s}(dy\,da),\alpha_s\Big)\rangle\,ds \bigg],
\\ 
I^2&=
\E\bigg[\int_t^T \langle X_s' ,f\Big(Y_s, \E\,\left[\langle X_s,1_N\rangle\,|\, (Y_s,\alpha_s)=(y,a)\right]\,\P_{Y_s,\alpha_s}(dy\,da),\alpha_s\Big)
\\ &\quad\quad\quad \quad -
 f\Big(Y_s, \E\,\left[\langle X_s',1_N\rangle\,|\, (Y_s,\alpha_s)=(y,a)\right]\,\P_{Y_s,\alpha_s}(dy\,da),\alpha_s\Big)\rangle\,ds \bigg],
\\I^3 &= \langle X_T -X'_T, g\Big(Y_T, \E\,\left[\langle X_T,1_N\rangle\,|\, (Y_T,\alpha_T)=(y,a)\right]\,\P_{Y_T,\alpha_T}(dy\,da)\Big)\rangle,
\\I^4&=  \langle X_T' , g\Big(Y_T, \E\,\left[\langle X_T,1_N\rangle\,|\, (Y_T,\alpha_T)=(y,a)\right]\,\P_{Y_T,\alpha_T}(dy\,da)\Big)
\\&\quad\quad\quad\quad 
-g\Big(Y_T, \E\,\left[\langle X_s',1_N\rangle\,|\, (Y_T,\alpha_T)=(y,a)\right]\,\P_{Y_T,\alpha_T}(dy\,da)\Big)
\rangle .
\end{align*}
Denoting by $p'$ the exponent conjugate to $p$ we have
\begin{align*}
    I^1\le
\left(\E \int_t^T | X_s -X'_s|^p\,ds\right)^{1/p}\left( \E\int_t^T\Big|f\Big(Y_s, \mu,\alpha_s\Big)\Big|^{p'}\,ds \right)^{1/p'},
\end{align*}
where we set for short $\mu(dy\,da)=\E\,\left[\langle X_s,1_N\rangle\,|\, (Y_s,\alpha_s)=(y,a)\right]\,\P_{Y_s,\alpha_s}(dy\,da)$. 
Proceeding as in \eqref{leggidenslimituno}-\eqref{leggidenslimitdue} we have $\|\mu\|_{\calw_q}\le K$. 
Using the growth condition \eqref{growthfg},
\begin{align*}
 \E\Big[   \Big|f\Big(Y_s, \mu,\alpha_s\Big)\Big|^{p'}\Big]\le C\,\E\left( 1+ |Y_s|^{\ell p'} + \chi(\|\mu\|_{\calw_q})^{p'}\right) \le K \,(1+ \|Y_s\|_{\ell p'}^{\ell p'})
    \le K \,(1+ \|Y_s\|_{r}^{\ell p'})
\end{align*}
because we have $\ell p'\le r$, due to the condition $\ell\le r(1-p^{-1})$. 
It follows that
\begin{align*}
    I^1&\le K
\left( \int_t^T \| X_s -X'_s\|_p^p\,ds\right)^{1/p}\left( \int_t^T(1+\|Y_s\|_{r}^{\ell p'})\,ds \right)^{1/p'} \le K\, \|\eta-\eta'\|_p,
\end{align*}
where, in the last inequality,  we have used  \eqref{stabilstateeq} (with $\xi=\xi'$, $Y=Y'$)
and    \eqref{stapdoppia}.

Next we estimate $I^2$.
We set for short
\begin{align*}
\rho(dy\,da)&= \P_{Y_s,\alpha_s}(dy\,da), 
\\
\nu_1(dy\,da)&=\E\,\left[\langle X_s,1_N\rangle\,|\, (Y_s,\alpha_s)=(y,a)\right]\,\P_{Y_s,\alpha_s}(dy\,da), 
\\
\nu_2(dy\,da)&=\E\,\left[\langle X_s',1_N\rangle\,|\, (Y_s,\alpha_s)=(y,a)\right]\,\P_{Y_s,\alpha_s}(dy\,da).
\end{align*}
 Using \eqref{liponfeg} we have
\begin{align*}
I^2&\le 
\E\bigg[\int_t^T | X_s'| \,|f(Y_s,\nu_1,\alpha_s) -
 f(Y_s, \nu_2,\alpha_s )|\,ds \bigg]
 \\& 
 \le C\,\E\bigg[ \int_t^T\! | X'_s|\int_{\R^d
\times A}\Big|\E\left[\langle X_s,1_N\rangle\,|\,(Y_s,\alpha_s)=(y,a)\right]-
\E\left[\langle X_s',1_N\rangle\,|\, (Y_s,\alpha_s)=(y,a)\right]
\Big|\,\P_{Y_s,\alpha_s}(dy\,da)
\\&\quad 
\cdot\Big( 1+ \chi(\|\rho\|_{\calw_r}) +\chi(\|\nu_1\|_{\calw_q})+\chi(\|\nu_2\|_{\calw_q} )+|Y_s|^\ell
\Big)\,ds\bigg].
\end{align*}
Proceeding as for 
\eqref{legge riflimitata},
\eqref{leggidenslimituno},
\eqref{leggidenslimitdue}  we have
$\|\rho\|_{\calw_r}+\|\nu_1\|_{\calw_q}+\|\nu_2\|_{\calw_q} \le K$.
The integral over $\R^d\times A$ is equal to
\begin{align*}
&
\int_{\R^d\times A}|\E\,\left[\langle  X_s-X'_s,1_N\rangle\,|\,(Y_s,\alpha_s)=(y,a)\right]
|\,\P_{Y_s,\alpha_s}(dy\, da)
\\&\quad =\E\,\left[|\E\,\left[\langle   X_s-X_s',1_N\rangle\,|\,(Y_s,\alpha_s)\right]|
\right]
       \le\E\,
\left[|\langle  X_s-X_s',1_N\rangle|\right]
\le \E\,
\left[|   X_s-X_s' |\right].
\end{align*}
Using  \eqref{stabilstateeq}, with $\xi=\xi'$ and $Y=Y'$,
we obtain $\E\,
\left[|   X_s-X_s' |\right]\le K\, \|   \eta-\eta' \|_p$. It follows that
\begin{align*}
   |I^2|
 \le
K \, \|   \eta-\eta' \|_p\int_t^T \E\,[| X_s'|\,(1+|Y_s|^\ell)]   \,ds.
\end{align*}
Noting again that  $\ell p'\le r$,
\begin{align*}
\int_t^T \E\,[| X_s'|\,(1+|Y_s|^\ell)]   \,ds
\le  K\,
\int_t^T \| X_s'\|_p \,( 1+ \|Y_s\|^\ell_{\ell p'})\,ds
\le  K\,
\int_t^T \| X_s'\|_p \,( 1+ \|Y_s\|^\ell_{r})\,ds,
\end{align*}
and from \eqref{stapdoppia} we obtain
\begin{align*}   |I^2|
 \le K
\, \, \|   \eta-\eta' \|_p\,\| \eta'\|_p \,(1+\|\xi\|_r)\le
K\, \|   \eta-\eta' \|_p.
\end{align*}
Similar estimates on $I^3$ and $I^4$ lead to 
\eqref{Junifcontuno}.

\medskip

Now we assume 
\(\|\eta'\|_{p_1}
\le L\) 
for some $p_1>p$ and we prove \eqref{Junifcontdue}.
From \eqref{stabilstateeq}, setting $\eta=\eta'$ and using the  Hölder  inequality we obtain
\begin{align*}
\E\, \Big[\sup_{s\in [t,T] }| X_s-X_s'|^p\Big]\le K \,\bigg\{ 
\omega_q\left(K\, \|\xi-\xi'\|_r
\right)^p +
\int_t^T \| X'_s\|^p_{p_1} \Big(\E\,\left[\omega_0(|Y_s-Y'_s|)^{\frac{pp_1}{p_1-p}}\right]\Big)^{\frac{p_1-p}{p_1}}\,ds
\bigg\}
\end{align*}
and since 
$\| X'_s\|_{p_1}\le C\, \|\eta'\|_{p_1}
\le K$ by \eqref{stapdoppia},
we have 
\begin{align}\label{stimaxperxi}
\E\, \Big[\sup_{s\in [t,T] }| X_s-X_s'|^p\Big]\le K
 \,\bigg\{ 
\omega_q\left(K\, \|\xi-\xi'\|_r\right)^p +\Big(
\E\,\Big[\omega_0(\sup_{s\in [t,T] }|Y_s-Y'_s|)^{\frac{pp_1}{p_1-p}}\Big]\Big)^{\frac{p_1-p}{p_1}}
\bigg\}.
\end{align}
We will use this inequality, together with 
\eqref{stabilstateeqY}, to conclude the proof.
Adding and subtracting  we obtain
$
J(t,\eta,\xi,\alpha)-
J(t,\eta,\xi',\alpha)=\sum_{j=1}^8I^j$, 
where
\begin{align*}
I^1&=
\E\bigg[\int_t^T \langle X_s -X'_s,f\Big(Y_s, \E\,\left[\langle X_s,1_N\rangle\,|\, (Y_s,\alpha_s)=(y,a)\right]\,\P_{Y_s,\alpha_s}(dy\,da),\alpha_s\Big)\rangle\,ds \bigg],
\\ 
I^2&=
\E\bigg[\int_t^T \langle X_s' ,f\Big(Y_s, \E\,\left[\langle X_s,1_N\rangle\,|\, (Y_s,\alpha_s)=(y,a)\right]\,\P_{Y_s,\alpha_s}(dy\,da),\alpha_s\Big)
\\ &\quad\quad\quad \quad -
 f\Big(Y'_s, \E\,\left[\langle X_s,1_N\rangle\,|\, (Y_s,\alpha_s)=(y,a)\right]\,\P_{Y_s,\alpha_s}(dy\,da),\alpha_s\Big)\rangle\,ds \bigg],
\\ 
I^3&=
\E\bigg[\int_t^T \langle X_s' ,f\Big(Y'_s, \E\,\left[\langle X_s,1_N\rangle\,|\, (Y_s,\alpha_s)=(y,a)\right]\,\P_{Y_s,\alpha_s}(dy\,da),\alpha_s\Big)
\\ &\quad\quad\quad \quad -
 f\Big(Y'_s, \E\,\left[\langle X_s',1_N\rangle\,|\, (Y_s,\alpha_s)=(y,a)\right]\,\P_{Y_s,\alpha_s}(dy\,da),\alpha_s\Big)\rangle\,ds \bigg],
 \\ 
I^4&=
\E\bigg[\int_t^T \langle X_s' ,f\Big(Y'_s, \E\,\left[\langle X_s',1_N\rangle\,|\, (Y_s,\alpha_s)=(y,a)\right]\,\P_{Y_s,\alpha_s}(dy\,da),\alpha_s\Big)
\\ &\quad\quad\quad \quad -
 f\Big(Y'_s, \E\,\left[\langle X_s',1_N\rangle\,|\, (Y_s',\alpha_s)=(y,a)\right]\,\P_{Y_s',\alpha_s}(dy\,da),\alpha_s\Big)\rangle\,ds \bigg],
\\I^5 &= \langle X_T -X'_T, g\Big(Y_T, \E\,\left[\langle X_T,1_N\rangle\,|\, (Y_T,\alpha_T)=(y,a)\right]\,\P_{Y_T,\alpha_T}(dy\,da)\Big)\rangle,
\\I^6&=  \langle X_T' , g\Big(Y_T, \E\,\left[\langle X_T,1_N\rangle\,|\, (Y_T,\alpha_T)=(y,a)\right]\,\P_{Y_T,\alpha_T}(dy\,da)\Big)
\\&\quad\quad\quad\quad 
-g\Big(Y_T', \E\,\left[\langle X_T,1_N\rangle\,|\, (Y_T,\alpha_T)=(y,a)\right]\,\P_{Y_T,\alpha_T}(dy\,da)\Big)
\rangle ,
\\I^7&=  \langle X_T' , g\Big(Y_T', \E\,\left[\langle X_T,1_N\rangle\,|\, (Y_T,\alpha_T)=(y,a)\right]\,\P_{Y_T,\alpha_T}(dy\,da)\Big)
\\&\quad\quad\quad\quad 
-g\Big(Y_T', \E\,\left[\langle X_T',1_N\rangle\,|\, (Y_T,\alpha_T)=(y,a)\right]\,\P_{Y_T,\alpha_T}(dy\,da)\Big)
\rangle ,
\\I^8&=  \langle X_T' , g\Big(Y_T', \E\,\left[\langle X_T',1_N\rangle\,|\, (Y_T,\alpha_T)=(y,a)\right]\,\P_{Y_T,\alpha_T}(dy\,da)\Big)
\\&\quad\quad\quad\quad 
-g\Big(Y_T', \E\,\left[\langle X_T',1_N\rangle\,|\, (Y_T',\alpha_T)=(y,a)\right]\,\P_{Y_T',\alpha_T}(dy\,da)\Big)
\rangle .
\end{align*}
These terms are  estimated as follows:
\begin{align*}
|I^1|+|I^3|+|I^5|+|I^7|\le K\,\Big( \E\, \Big[\sup_{s\in [t,T] }| X_s-X_s'|^p\Big]\Big)^{1/p},
\end{align*}
\begin{align*}
|I^4| \le 
K\,\omega_q^f(\|\xi-\xi'\|_r), \qquad
|I^8|\le 
K\,\omega_q^g(\|\xi-\xi'\|_r),
\end{align*}
\begin{align*}
|I^2| &\le 
K\, \Big(\E  \,\Big[\omega_{0}^f(\sup_{s\in[t,T]}|Y_s-Y_s'|)^{\frac{ r}{r(1+1/p_1)-\ell }} \Big]\Big)^{\frac{r(1+1/p_1)-\ell }{ r}},
 \\
|I^6|&\le 
 K\, \Big(\E  \,\Big[\omega_{0}^g(\sup_{s\in[t,T]}|Y_s-Y_s'|)^{\frac{ r}{r(1+1/p_1)-\ell }} \Big]\Big)^{\frac{r(1+1/p_1)-\ell }{ r}},
\end{align*}
and taking into account \eqref{stimaxperxi} we arrive at \eqref{Junifcontdue}.

We only show the estimates on $I^2$ and $I^4$, since the other terms are treated in a similar way. 
We set for short
\begin{align*}
\nu(dy\,da)&=\E\,\left[\langle X_s,1_N\rangle\,|\, (Y_s,\alpha_s)=(y,a)\right]\,\P_{Y_s,\alpha_s}(dy\,da).
\end{align*}
Using \eqref{unifomega0f}  we obtain
\begin{align*}
    |I^2|\le 
\E\,\bigg[\int_t^T | X_s'| \,\omega_{0}^f(|Y_s-Y_s'|)\,
(1+|Y_s|^\ell+|Y_s'|^\ell + \chi(\|\nu\|_{\calw_q})) ds\bigg].
\end{align*}
Proceeding as for 
\eqref{leggidenslimituno},
\eqref{leggidenslimitdue}  we have
$\|\nu\|_{\calw_q} \le K$, 
and denoting by $p_1'$ the exponent conjugate to $p_1$ we obtain
\begin{align*}
    |I^2|\le K\,
\left(\int_t^T \E\,[| X_s'|^{p_1} ]\,ds\right)^{1/p_1} \left( \E\int_t^T \omega_{0}^f(|Y_s-Y_s'|)^{p_1'}\,
(1+|Y_s|+|Y_s'|)^{\ell p_1'} ds\right)^{1/p_1'}.
\end{align*}
As recalled before,  we have $\E\,| X_s'|^{p_1}=\|X_s'\|_{p_1}^{p_1}\le C\,\|\eta'\|_{p_1}^{p_1}\le K$. Since $\ell\le r(1-1/p)$   and   $p_1>p$ we have $\ell p_1'< r$ and we have
\begin{align*}
    |I^2|& \le 
K\, \left( \E\int_t^T  
(1+|Y_s|+|Y_s'|)^{r} ds\right)^{\ell/r}
\left( \E\int_t^T \omega_{0}^f(|Y_s-Y_s'|)^{\frac{ p_1'r}{r-\ell p_1'}} ds\right)^{\frac{r-\ell p_1'}{rp_1'}}.
\end{align*}
Since, by \eqref{stapdoppia}, $\|Y_s\|_r+\|Y_s'\|_r\le C\,( \|\xi\|_r+\|\xi'\|_r)\le K$,  we   obtain
\begin{align*}
    |I^2|& \le 
K\, \Big(\E  \,\Big[\omega_{0}^f(\sup_{s\in[t,T]}|Y_s-Y_s'|)^{\frac{ r}{r(1-1/p_1)-\ell }} \Big]\Big)^{\frac{r(1-1/p_1)-\ell }{ r}}.
\end{align*}

Next we estimate $I^4$.
We set for short
\begin{align*}
\mu(dy\,da)&=\E\,\left[\langle X'_s,1_N\rangle\,|\, (Y_s,\alpha_s)=(y,a)\right]\,\P_{Y_s,\alpha_s}(dy\,da), 
\\
\nu(dy\,da)&=\E\,\left[\langle X_s',1_N\rangle\,|\, (Y'_s,\alpha_s)=(y,a)\right]\,\P_{Y'_s,\alpha_s}(dy\,da),
\end{align*}
and we have
\begin{align*}
    I^4&\le 
\E\,\bigg[\int_t^T | X_s'| \,\Big|f\Big(Y'_s, \mu,\alpha_s\Big) -
 f\Big(Y'_s, \nu,\alpha_s\Big)\Big|\,ds \bigg].
 \end{align*}
Using \eqref{unifomegaqf} we obtain 
\begin{align*}
\Big|f\Big(Y'_s, \mu,\alpha_s\Big) -
 f\Big(Y'_s, \nu,\alpha_s\Big)\Big|
 \le \omega_q^f(\calw_q(\nu,\nu'))\,\Big(1+\chi(\|\nu\|_{\calw_q})+\chi(\|\nu'\|_{\calw_q})+|Y'_s|^\ell\Big).
\end{align*}
Proceeding as in \eqref{leggidenslimituno}-\eqref{leggidenslimitdue} we have $\|\mu\|_{\calw_q}+\|\nu\|_{\calw_q}\le K$. 
By the same passages that led to  \eqref{stimaWq} we obtain $\calw_q(\mu,\nu)\le K\,  \|\xi-\xi'\|_r$. It follows that
\begin{align*}
|I^4|\le K\,\omega_q^f(K\,\|\xi-\xi'\|_r)\,\E\,\bigg[
 \int_t^T  | X_s'|(1+|Y'_s|^\ell)
 \,ds \bigg].
\end{align*}
Since  $\ell p'\le r$,
\begin{align*}
\int_t^T \E\,[| X_s'|\,(1+|Y_s|^\ell)]   \,ds
\le  K\,
\int_t^T \| X_s'\|_p \,( 1+ \|Y_s'\|^\ell_{\ell p'})\,ds
\le  K\,
\int_t^T \| X_s'\|_p \,( 1+ \|Y_s'\|^\ell_{r})\,ds,
\end{align*}
and from \eqref{stapdoppia}
we conclude that
$|I^4|\le K\,\omega_q^f(K\,\|\xi-\xi'\|_r)$.
\qed

\medskip

\paragraph{Proof of Lemma \ref{Jcont}.}  

\emph{Proof of $(i)$.}
For any integer $m\ge 1$
let us define a    truncation function $T_m$ setting $T_m(u)=(u\wedge m)\vee (-m)$ for $u\in\R$. Let $T_m(\eta)$ and $T_m(\eta_n)$ be defined componentwise. Then
\begin{align*}
\sup_{\alpha\in\cala,\,t\in[0,T]}|J(t,\eta_n,\xi_n,\alpha)-
J(t,\eta,\xi,\alpha)|\le I^1_{nm}+I^2_{nm}+I^3_{nm}+I^4_n,
\end{align*}
where
\begin{align*}
I^1_{nm}&=\sup_{\alpha\in\cala,\,t\in[0,T]}|J(t,\eta_n,\xi_n,\alpha)-
J(t,T_m(\eta_n),\xi_n,\alpha)|
\\ 
I^2_{nm}&=\sup_{\alpha\in\cala,\,t\in[0,T]}|
J(t,T_m(\eta_n),\xi_n,\alpha)-J(t,T_m(\eta_n),\xi,\alpha)|
\\I^3_{nm} &= \sup_{\alpha\in\cala,\,t\in[0,T]}|
J(t,T_m(\eta_n),\xi,\alpha)-J(t,\eta_n,\xi,\alpha)|
\\I^4_n&= \sup_{\alpha\in\cala,\,t\in[0,T]}|J(t,\eta_n,\xi,\alpha)-J(t,\eta,\xi,\alpha)|.
\end{align*}
We first apply the inequality \eqref{Junifcontuno} and we obtain
\begin{align*}
I^1_{nm}\le K\, \|\eta_n-T_m(\eta_n)\|_p,
\qquad
I^3_{nm}\le K\, \|\eta_n-T_m(\eta_n)\|_p,
\end{align*}
so that  
\begin{align}\label{UIquantit}
\limsup_n\,(I^1_{nm}+
I^3_{nm})\le K\,\limsup_n \|\eta_n-T_m(\eta_n)\|_p=
 K\, \limsup_n \Big(\E [|\eta_n|^p\,1_{|\eta_n|>m}]\Big)^{1/p}.
 \end{align} 
This tends to $0$ as $m\to\infty$: indeed, since $\{\eta_n\}$ is convergent in $L^p$, the sequence $\{|\eta_n|^p\}$ is uniformly integrable.
Given $\epsilon>0$ we fix $m$ large enough to have $\limsup_{n}(I^1_{nm}+
I^3_{nm})\le \epsilon$. It follows that
\begin{align*}
\limsup_{n\to\infty}
\sup_{\alpha\in\cala,\,t\in[0,T]}
|J(t,\eta_n,\xi_n,\alpha)-
J(t,\eta,\xi,\alpha)|\le \epsilon + \limsup_{n\to\infty} \,(I^2_{nm}+I^4_n).
\end{align*}
Applying again \eqref{Junifcontuno} we have
$I^4_{n}\le K\, \|\eta_n-\eta\|_p \to0$ as $n\to\infty$. To conclude the proof of point $(i)$ it is sufficient to prove that, for fixed $m\ge1$, we have
\begin{align}\label{Inmunif}
 \lim_{n\to\infty}I^2_{nm}=0.
\end{align}
Since     $T_m(\widehat\eta)$ is bounded (by a constant that depends on $m$) we can apply the inequality \eqref{Junifcontdue}: there exists a constant $K=K_m$, depending on $m$ but independent of $n,\alpha,t$, such that
\begin{align}
\nonumber
&
|J(t,T_m(\eta_n),\xi_n,\alpha)- J(t,T_m(\eta_n),\xi,\alpha)|\\\nonumber
& \qquad
\le K_m\bigg\{  \omega_q\left(K\, \|\xi_n-\xi\|_r\right) +\omega^f_q\left(K\, \|\xi_n-\xi\|_r\right)+\omega_q^g\left(K\, \|\xi_n-\xi\|_r\right)
\\&\qquad\nonumber
+\Big(\E\,\Big[\omega_0\Big(\sup_{s\in [t,T] }|Y^{t,\xi_n}_s- Y^{t,\xi}_s|\Big)^{\beta}\Big]\Big)^{1/\beta}
 +  \Big( \E  \,\Big[\omega_{0}^f\Big(\sup_{s\in[t,T]}|Y^{t,\xi_n}_s-Y^{t,\xi}_s|\Big)^{\gamma} \Big]\Big)^{1/\gamma} 
 \\ 
& \qquad
+  \Big( \E  \,\Big[\omega_{0}^g\Big(\sup_{s\in[t,T]}|Y^{t,\xi_n}_s-Y^{t,\xi}_s|\Big)^{\gamma} \Big]\Big)^{1/\gamma} 
\bigg\},\label{unifmn}
\end{align}
 for every $\alpha\in\cala$ and $t\in[0,T]$,
where $\beta>0 $, $\gamma>0$ are suitable constants. Note that the right-hand side of \eqref{unifmn} does not depend on $\alpha$. It remains to check that it tends to $0$, for fixed $m$, when $n\to\infty$, uniformly for $t\in[0,T]$. We know that $\|\xi_n-\xi\|_r\to0$, so it is clear that the first three terms in the right-hand side of \eqref{unifmn} vanish. 
Suppose that we have,  for some constant $\delta>0$, some subsequence ${n_k}$, and some $t_{k}\in[0,T]$,
\begin{align*}
\E  \,\Big[\omega_{0}\Big(\sup_{s\in[t_{k},T]}|Y^{t_k,\xi_{n_k}}_s- Y^{t_k,\xi}_s|\Big)^{\beta} \Big]\ge \delta.
\end{align*}
We note that
by \eqref{stabilstateeqY} we have
\begin{align}
\E\, \Big[\sup_{s\in [t_{k},T] }|Y^{t_k,\xi_{n_k}}_s-  Y^{t_k,\xi}_s|^r\Big]\le K\, \|\xi_{n_k}-\xi\|_r^r\to0, \qquad k\to\infty.
\end{align}
Then we can extract a subsubsequence $\{{k_j}\}$ such that 
\begin{align*}
\sup_{s\in [t_{{k_j}},T] }|Y^{t_{k_j},\xi_{n_{k_j}}}_s-  Y^{t_{k_j},\xi}_s|\to0, \qquad \P-a.s. 
\end{align*}
as $j\to\infty$. Since $\omega_0$ is continuous at zero and bounded we would get a contradiction with the dominated convergence theorem. This proves that $\E\,\Big[\omega_0\Big(\sup_{s\in [t,T] }|Y^{t,\xi_n}_s- Y^{t,\xi}_s|\Big)^{\beta}\Big]\to 0$ uniformly in $t$. The last two summands in the right-hand side of \eqref{unifmn} also tend to $0$ by similar arguments and this completes the proof of \eqref{Inmunif} and of point $(i)$. 

\medskip

\emph{Proof of $(ii)$.} We construct $T_m$ as before and we write
\begin{align*}
\sup_{\alpha\in\cala,\,\beta}|J(t_n,\eta_n,\xi_n,\alpha)-
J(t_n,\eta,\xi,\alpha)|\le I^1_{nm}+I^2_{nm}+I^3_{nm}+I^4_n,
\end{align*}
where
\begin{align*}
I^1_{nm}&=\sup_{\alpha\in\cala,\,\beta}|J(t_n,\eta_n,\xi_n,\alpha)-
J(t_n,T_m(\eta_n),\xi_n,\alpha)|
\\ 
I^2_{nm}&=\sup_{\alpha\in\cala,\,\beta}|
J(t_n,T_m(\eta_n),\xi_n,\alpha)-J(t_n,T_m(\eta_n),\xi,\alpha)|
\\I^3_{nm} &= \sup_{\alpha\in\cala,\,\beta}|
J(t_n,T_m(\eta_n),\xi,\alpha)-J(t_n,\eta_n,\xi,\alpha)|
\\I^4_n&= \sup_{\alpha\in\cala,\,\beta}|J(t_n,\eta_n,\xi,\alpha)-J(t_n,\eta,\xi,\alpha)|.
\end{align*}
The rest of the  proof is almost entirely the same as for point $(i)$.  The only required change  is the following: instead of showing that the right-hand side of \eqref{UIquantit} tends to $0$ as $m\to\infty$ now we need the stronger condition
\begin{align}\label{unifUIter}
\limsup_n \;\sup_\beta     \,\E [|\eta^\beta_n|^p\,1_{|\eta^\beta_n|>m}]\to0,\qquad m\to\infty.
\end{align}
This follows directly from \eqref{etaennebeta}. Indeed, first note that
\begin{align*}
\E \,[|\eta_n^\beta|^p\,1_{|\eta^\beta_n|>m}]
\le 2^{p-1}\E\, [|\eta^\beta_n-\eta |^p] +
2^{p-1}\E\, [|\eta |^p\,1_{|\eta^\beta_n|>m}].
\end{align*}
Given $\epsilon>0$, by \eqref{etaennebeta} we can choose $N$ so large that the first summand is $\le\epsilon$ for $n>N$ and for all $\beta$. Next  we choose $\delta>0$ (depending on $\eta,\epsilon,p$) such that  $\E\, [|\eta |^p\,1_{A}]\le \epsilon$ whenever $\P(A)\le \delta$. Then, taking $A=\{|\eta^\beta_n|>m\}$, we have
\begin{align*}
\P(A)^{1/p}=  \P(|\eta^\beta_n|>m)^{1/p} \le \frac{\|\eta^\beta_n\|_p}{m}  \le \frac{\|\eta^\beta_n-\eta\|_p}{m} +\frac{\|\eta\|_p}{m} \le \frac{\sup_{n,\beta}\|\eta^\beta_n-\eta\|_p}{m}    +\frac{\|\eta\|_p}{m} 
\end{align*}
so that we can find $M$ such that this is  $\le\delta^{1/p}$ for $m>M$. It follows that 
\begin{align*}
\sup_\beta\,\E \,[|\eta_n^\beta|^p\,1_{|\eta^\beta_n|>m}]
\le \epsilon  +
2^{p-1}\epsilon, \qquad \text{for } n>N,\;m>M.
\end{align*}
This proves \eqref{unifUIter} and  concludes the proof of point $(ii)$ and of the Lemma. \qed

\bigskip

\paragraph{Proof of Theorem \ref{thsolvisc}.}
The value function $v$ is continuous by Theorem \ref{Continuityofvinciandtime}, and it satisfies the terminal condition.

{\it Subsolution property.} Suppose that $(t,\mu)\in [0,T)\times \cald_{p,r}$ 
and for some $\varphi\in \tilde C^{1,2}([0,T]\times \cald_{p,r})$  and $(t,\mu)\in [0,T)\times \cald_{p,r}$ we have
\begin{align}
    \label{subsolv}
(v-\varphi)(t,\mu)  =  \max_{[0,T]\times \cald_{p,r}}(v-\varphi). 
\end{align} 
Take  $(\eta,\xi)$ to be any pair of $\calf_0$-measurable random variables in $\R^N\times \R^d$ such that 
$\mu=\P_{\eta,\xi}$, and fix $\epsilon>0$. 
From the dynamic programming principle (Proposition \ref{propDPPbis}), setting $s=t+\epsilon$ in formula 
\eqref{DPPv}, we deduce that there exist admissible controls $\alpha^\epsilon$ such that
\begin{align*}
-\epsilon\le \frac{1}{\epsilon}\,[
v(t+\epsilon, \P_{X_{t+\epsilon}^\epsilon,Y_{t+\epsilon}})-
v(t,\mu)]+\frac{1}{\epsilon}
\int_t^{t+\epsilon}\E\Big[\langle X^\epsilon_s ,f(Y_s, \Gamma(\P_{X_s^\epsilon,Y_s,\alpha^\epsilon_s}),\alpha^\epsilon_s)\rangle\Big] \,ds ,
\end{align*} 
where we set for simplicity 
$Y_s=Y_s^{t,\xi}$ and $X_s^\epsilon=X_r^{t,\eta,\xi,\alpha^\epsilon}$ for $s\in [t,T]$. 
By \eqref{subsolv}, the same inequality holds with $v$ replaced by $\varphi$. Applying the It\^o formula \eqref{itosolution} we obtain
\begin{align}\label{versosottosol}
-\epsilon & \le \;  \frac{1}{\epsilon}
\int_t^{t+\epsilon}\Big\{ \partial_t\varphi(s, \P_{X^\epsilon_{s},Y_{s}})+
 \E\Big[
\call^{\alpha^\epsilon}\varphi(s,X^\epsilon_s,Y_s, \P_{X_{s}^\epsilon,Y_{s},\alpha^\epsilon_s}) \\
& \qquad \qquad + \;  \langle X^\epsilon_s ,f(Y_s, \Gamma(\P_{X_s^\epsilon,Y_s,\alpha^\epsilon}),\alpha_s^\epsilon)\rangle\Big] \Big\}\,ds .
\end{align} 
Let us define
\begin{align*}
\rho_1(\epsilon)&= 
\frac{1}{\epsilon}
\int_t^{t+\epsilon}\partial_t\varphi(s, \P_{X^\epsilon_{s},Y_{s}})\,ds - 
\partial_t\varphi(t,\mu),
\\
\rho_2(\epsilon)&=
\frac{1}{\epsilon}
\int_t^{t+\epsilon}
 \E\Big[ \langle X^\epsilon_s ,f(Y_s, \Gamma(\P_{X_s^\epsilon,Y_s,\alpha^\epsilon}),\alpha_s^\epsilon)\rangle - \langle \eta,f(\xi, \Gamma(\P_{X_s^\epsilon,Y_s,\alpha^\epsilon}),\alpha_s^\epsilon)\rangle\Big] \,ds ,
 \\
\rho_3(\epsilon)&=
\frac{1}{\epsilon}
\int_t^{t+\epsilon}
 \E\Big[  \langle \eta,f(\xi, \Gamma(\P_{X_s^\epsilon,Y_s,\alpha^\epsilon}),\alpha_s^\epsilon)\rangle- \langle \eta,f(\xi, \Gamma(\P_{\eta,\xi,\alpha^\epsilon}),\alpha_s^\epsilon)\rangle\Big] \,ds ,
  \\
\rho_4(\epsilon)&=
\frac{1}{\epsilon}
\int_t^{t+\epsilon}
 \E\Big[ 
\call^{\alpha^\epsilon}\varphi(s,X^\epsilon_s,Y_s, \P_{X_{s}^\epsilon,Y_{s},\alpha^\epsilon_s})-
\call^{\alpha^\epsilon}\varphi(t,\eta,\xi, \P_{\eta,\xi,\alpha^\epsilon_s})
 \Big] \,ds .
\end{align*}
We claim that  $\rho_i(\epsilon)\to0 $ as $\epsilon\to0$, for every $i$. Assuming this for a moment, from \eqref{versosottosol} we obtain
\begin{equation}\label{versosottosoldue}
-\epsilon\le \sum_{i=1}^4\rho_i(\epsilon)+\partial_t\varphi(t,\mu)+\frac{1}{\epsilon}
\int_t^{t+\epsilon}
 \E\Big[
\call^{\alpha^\epsilon}\varphi(t,\eta,\xi, \P_{\eta,\xi,\alpha^\epsilon_s})
+ \langle \eta ,f(\xi, \Gamma(\P_{\eta,\xi,\alpha^\epsilon}),\alpha_s^\epsilon)\rangle\Big]\,ds.
\end{equation} 
We note that the measure $\pi:=\P_{\eta,\xi,\alpha^\epsilon}$ has marginal $\pi_{12}=\P_{\eta,\xi}=\mu $ and so
\begin{align*}
 &\E\Big[
\call^{\alpha^\epsilon}\varphi(t,\eta,\xi, \P_{\eta,\xi,\alpha^\epsilon_s})
+ \langle \eta ,f(\xi, \Gamma(\P_{\eta,\xi,\alpha^\epsilon}),\alpha_s^\epsilon)\rangle\Big]
\\&\qquad \le 
\sup_{\pi\in \cald\,:\,\pi_{12}=\mu} 
\int_{\R^N\times \R^d\times A} \Big[
\call^{a}\varphi(t,x,y, \pi)
+ \langle y ,f(y, \Gamma(\pi),a)\rangle\Big]\,\pi(dx\,dy\,da).
\end{align*} 
Since the right-hand side does not depend on $s$ it follows from \eqref{versosottosoldue} that
\begin{align*}
-\epsilon\le \sum_{i=1}^5\rho_i(\epsilon)+\partial_t\varphi(t,\mu)+\sup_{\pi\in \cald\,:\,\pi_{12}=\mu} 
\int_{\R^N\times \R^d\times A} \Big[
\call^{a}\varphi(t,x,y, \pi)
+ \langle y ,f(y, \Gamma(\pi),a)\rangle\Big]\,\pi(dx\,dy\,da),
\end{align*}
and letting $\epsilon\to 0$ we get the required conclusion. 

It remains to prove the claim. We only sketch the  argument, since the required estimates  are similar to previous  ones.   We first note that, by   Corollary \ref{continizi} we have, as $s\to t$, 
\begin{align}
    \label{ancoracontxy}
\tilde \calw(\P_{X_{s}^\epsilon,Y_{s}},\mu)\le \E\,[| X_{s}^\epsilon-\eta|^p]+\E\,[|Y_{s}-\xi|^r]
\le 
\sup_{\alpha\in\cala}\|X_s^{t,\eta,\xi,\alpha}-\eta\|_p^p+\|Y_s^{t,\xi}-\xi\|^r_r\to 0.
\end{align}
Then $\rho_1(\epsilon)\to0 $ follows from the continuity of $\partial_t\varphi$. 

To prove  that  
$\rho_2(\epsilon)\to0 $ we use our assumptions on $f$, namely the growth condition
\eqref{growthfg} in Assumption (\nameref{HpA1})  and the uniform continuity condition 
\eqref{unifomega0f} in Assumption (\nameref{HpUniformContinuityfg}), as well as the estimates on the solution to the state equation given by Theorem \ref{existstateeqprovv}.

The claim
$\rho_3(\epsilon)\to0 $ also follows from properties of $f$: the Lipschitz condition \eqref{liponfeg} and the uniform continuity condition \eqref{unifomegaqf} in Assumption (\nameref{HpUniformContinuityfg}).

To prove  that  
$\rho_4(\epsilon)\to0 $ we consider separately the various terms that occur in the operator $\call^a$. They can all be treated by similar arguments. For instance let us show how to prove that
\begin{align}
&I_\epsilon:=\frac{1}{\epsilon}
\int_t^{t+\epsilon}
 \E\Big[ Trace[diag(X_{s}^\epsilon)\,h (Y_{s}, \Gamma(\P_{X_{s}^\epsilon,Y_{s},\alpha^\epsilon_s}),\alpha^\epsilon_s) \sigma(Y_{s})^T
\partial_x\partial_y\delta_m \varphi(s,\P_{X_{s}^\epsilon,Y_{s}};X_{s}^\epsilon,Y_s)]
\\&\qquad\qquad\quad
-
Trace[
  diag(\eta)\,h (\xi, \Gamma(\P_{\eta,\xi,\alpha^\epsilon_s}),\alpha^\epsilon_s) \sigma(\xi)^T
\partial_x\partial_y\delta_m  \varphi(t,\mu;\eta,\xi)]
 \Big] \,ds \to 0.
\end{align}
 We have $I_\epsilon= I_\epsilon^1+I_\epsilon^2$, where we set
\begin{align*}
&I^1_\epsilon=\frac{1}{\epsilon}
\int_t^{t+\epsilon}
 \E\Big[ Trace[\{diag(X_{s}^\epsilon)\,h (Y_{s}, \Gamma(\P_{X_{s}^\epsilon,Y_{s},\alpha^\epsilon_s}),\alpha^\epsilon_s) \sigma(Y_{s})^T-
 diag(\eta)\,h (\xi, \Gamma(\P_{\eta,\xi,\alpha^\epsilon_s}),\alpha^\epsilon_s) \sigma(\xi)^T\} 
\\&\qquad\qquad\quad
\partial_x\partial_y\delta_m \varphi(s,\P_{X_{s}^\epsilon,Y_{s}};X_{s}^\epsilon,Y_s)]
 \Big] \,ds,
\end{align*}
\begin{align*}
&I_\epsilon^2=\frac{1}{\epsilon}
\int_t^{t+\epsilon}
 \E\Big[ Trace[ diag(\eta)\,h (\xi, \Gamma(\P_{\eta,\xi,\alpha^\epsilon_s}),\alpha^\epsilon_s) \sigma(\xi)^T
 \\&\qquad\qquad\quad
 \{
\partial_x\partial_y\delta_m \varphi(s,\P_{X_{s}^\epsilon,Y_{s}};X_{s}^\epsilon,Y_s)
-
\partial_x\partial_y\delta_m  \varphi(t,\mu;\eta,\xi)\}]
 \Big] \,ds .
\end{align*}
Recalling that  $\partial_x\partial_y\delta_m  \varphi$ is bounded  we have
\begin{align*}
&I^1_\epsilon\le\frac{C}{\epsilon}
\int_t^{t+\epsilon}
 \E\Big[ |diag(X_{s}^\epsilon)\,h (Y_{s}, \Gamma(\P_{X_{s}^\epsilon,Y_{s},\alpha^\epsilon_s}),\alpha^\epsilon_s) \sigma(Y_{s})^T-
 diag(\eta)\,h (\xi, \Gamma(\P_{\eta,\xi,\alpha^\epsilon_s}),\alpha^\epsilon_s) \sigma(\xi)^T| 
 \Big] \,ds,
\end{align*}
which tends to $0$ by \eqref{ancoracontxy}, the 
uniform continuity condition 
\eqref{unifomega0} in Assumption (\nameref{HpUniformContinuityh}),  the Lipschitz condition of Assumption (\nameref{HphLipschitz})  and
the uniform continuity condition 
\eqref{unifomegaq} in Assumption (\nameref{HpUniformContinuityh}). 
Similarly, 
Recalling that $h$ is bounded and $\sigma$ is Lipschitz, we see that  $I_\epsilon^2$ can be estimated as
\begin{align*}
&I_\epsilon^2\le\frac{C}{\epsilon}
\int_t^{t+\epsilon}
 \E\Big[ | \eta|(1+|\xi|)\;
 |
\partial_x\partial_y\delta_m \varphi(s,\P_{X_{s}^\epsilon,Y_{s}};X_{s}^\epsilon,Y_s)
-
\partial_x\partial_y\delta_m  \varphi(t,\mu;\eta,\xi)|
 \Big] \,ds ,
\end{align*}
which tends to $0$ by \eqref{ancoracontxy} and the regularity properties of $\varphi$. 
This way we conclude that $I_\epsilon\to0$ and by similar arguments that $\rho_4(\epsilon)\to 0$.

\bigskip

{\it Supersolution property.} Suppose that $(t,\mu)\in [0,T)\times \cald_{p,r}$ 
and for some $\varphi\in \tilde C^{1,2}([0,T]\times \cald_{p,r})$  and $(t,\mu)\in [0,T)\times \cald_{p,r}$ we have
\begin{align}
\label{supersolv}
(v-\varphi)(t,\mu)  =  \min_{[0,T]\times \cald_{p,r}}(v-\varphi). 
\end{align}
Fix $\pi\in\cald$ with $\pi_{12}=\mu$
and take $\calf_0$-measurable random variables $(\eta,\xi,\alpha):\Omega\to \R^N\times \R^d\times A$ such that $\P_{\eta,\xi,\alpha}=\pi$ so that in particular $\P_{\eta,\xi}=\mu$ (they can be constructed as functions of the auxiliary variable $U$ alone). As an admissible control choose 
the constant-in-time  process equal to the random variable $\alpha$ and denote $(X,Y)$ the corresponding trajectory starting from $(\eta,\xi)$ at time $t$. From the dynamic programming principle (Proposition \ref{propDPPbis}) we have, for $t\le t+h\le T$,
\begin{align*}
v(t+h, \P_{X_{t+h},Y_{t+h}})-
v(t,\mu)
+\int_t^{t+h}\E\Big[\langle X_s ,f(Y_s, \Gamma(\P_{X_s,Y_s,\alpha}),\alpha)\rangle\Big] \,ds 
\le 0.
\end{align*}
By \eqref{supersolv}, the same inequality holds with $v$ replaced by $\varphi$. Applying the It\^o formula \eqref{itosolution} we obtain
\begin{align*}
\int_t^{t+h}\Big\{ \partial_t\varphi(s, \P_{X_{s},Y_{s}})+
 \E\Big[
\call^\alpha\varphi(X_s,Y_s, \P_{X_{s},Y_{s},\alpha})
+ \langle X_s ,f(Y_s, \Gamma(\P_{X_s,Y_s,\alpha}),\alpha)\rangle\Big] \Big\}\,ds 
\le 0.
\end{align*} 
Using again the regularity properties of $\varphi$ and our assumptions on the coefficients, 
it can be checked that the term in curly brackets is a continuous function of $s$ as $s\to t$. Dividing by $h$ and letting $h\to 0$ we obtain
\begin{align*}
    0&\ge
\partial_t\varphi(t, \mu)+
 \E\Big[
\call^\alpha\varphi(\eta,\xi, \pi)
+ \langle \eta ,f(\xi, \Gamma(\pi),\alpha)\rangle\Big]
\\&=
\partial_t\varphi(t, \mu)+
 \int_{\R^N\times \R^d\times A}\Big[
\call^a\varphi(x,y, \pi)
+ \langle x,f(y, \Gamma(\pi),a)\rangle\Big]  \,\pi(dx\,dy\,da).
\end{align*}
The conclusion follows from the arbitrariness of $\pi$. 
\qed

\bigskip
\paragraph{A complement to the proof of Proposition \ref{propDPPbis}}
We conclude the Appendix with the following proposition, which was used in the construction described before Proposition \ref{propDPPbis}.

\begin{Proposition}\label{varskorohod}
    Let $\calx,\caly$ be Polish spaces and $(\gamma_n)$ be a sequence of probabilities on $\calx\times\caly$ such that their first marginal $\mu$ is the same for all  $n$. Then there exist Borel measurable mappings $\underline{\xi}:(0,1)\to\calx$, $\underline{\xi}_n:(0,1)\to \caly$ such that $(\underline{\xi},\underline{\xi}_n)_\sharp m=\gamma_n$, i.e. $(\underline{\xi},\underline{\xi}_n)$ carries the Lebesgue measure $m$ to $\gamma_n$.
\end{Proposition}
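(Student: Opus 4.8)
The plan is to build the maps one at a time, using the fact that probability measures on a Polish space can be parametrized by $(0,1)$ via their quantile-type transforms, and then to disintegrate each $\gamma_n$ with respect to its \emph{common} first marginal $\mu$. First I would fix, once and for all, a Borel isomorphism $\iota:\calx\to\calx'$ where $\calx'$ is a Borel subset of a standard space (e.g.\ of $[0,1]$, by the Borel isomorphism theorem for Polish spaces), and push $\mu$ forward to $\mu':=\iota_\sharp\mu$ on $[0,1]$. By the classical inverse-transform construction there is a Borel map $G:(0,1)\to[0,1]$ with $G_\sharp m=\mu'$ (take $G$ to be the generalized inverse of the cumulative distribution function of $\mu'$, suitably defined on the at most countably many atoms so as to remain single-valued and measurable). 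Setting $\underline{\xi}=\iota^{-1}\circ G$ gives a single Borel map $\underline{\xi}:(0,1)\to\calx$ with $\underline{\xi}_\sharp m=\mu$, \emph{independent of $n$}; this is the point where the common-marginal hypothesis is first exploited, and it is why the $\calx$-component can be chosen the same for all $n$.

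Next I would disintegrate each $\gamma_n$ over its first marginal: since $\calx,\caly$ are Polish, there is a Borel kernel $x\mapsto \kappa_n(x,\cdot)$, i.e.\ a measurable family of probability measures on $\caly$, such that $\gamma_n(dx\,dy)=\mu(dx)\,\kappa_n(x,dy)$. The key construction is then a \emph{jointly measurable} parametrization of this family: I claim there is a Borel map $R_n:\calx\times(0,1)\to\caly$ such that for $\mu$-a.e.\ $x$ one has $R_n(x,\cdot)_\sharp m=\kappa_n(x,\cdot)$. This is the standard ``measurable inverse transform with parameter'': using the fixed Borel isomorphism of $\caly$ with a Borel subset of $[0,1]$, reduce to $\caly=[0,1]$, and define $R_n(x,u)$ as the generalized inverse (in the $[0,1]$-variable) of $u\mapsto \kappa_n(x,(-\infty,u])$; the joint measurability follows because $(x,u,t)\mapsto \kappa_n(x,(-\infty,t])-u$ is Borel and one can write $R_n(x,u)=\inf\{t:\kappa_n(x,(-\infty,t])\ge u\}$ as a countable infimum over rationals of Borel sets, hence Borel. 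Finally I would set $\underline{\xi}_n(u)=R_n(\underline{\xi}(u),u)$ — but this uses the \emph{same} $u$ twice, which does not produce the correct joint law. The correct device is to use a measure-preserving bijection $(0,1)\cong(0,1)^2$ (with $m\cong m\otimes m$), write a point of $(0,1)$ as a pair $(u_1,u_2)$, put $\underline{\xi}(u)=\iota^{-1}(G(u_1))$ and $\underline{\xi}_n(u)=R_n(\underline{\xi}(u),u_2)$; then $(\underline{\xi},\underline{\xi}_n)_\sharp m = \int \delta_{\underline{\xi}(u_1)}\otimes \kappa_n(\underline{\xi}(u_1),\cdot)\,m(du_1)=\int \delta_x\otimes\kappa_n(x,\cdot)\,\mu(dx)=\gamma_n$, and crucially $\underline{\xi}$ depends only on $u_1$, hence is the same for all $n$.

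\textbf{Main obstacle.} The only genuinely delicate point is the joint Borel measurability of the kernel parametrization $R_n$, i.e.\ that one can select, measurably in the parameter $x$, an inverse-transform map for the disintegration kernels $\kappa_n(x,\cdot)$. This is where one must invoke (or reprove) a measurable version of the Skorokhod representation / inverse-transform theorem — for instance via the Kuratowski--Ryll-Nardzewski measurable selection theorem, or by the explicit rational-infimum formula sketched above after reducing to $\caly\subseteq[0,1]$. Everything else (the Borel isomorphism reductions, the single-marginal map $\underline{\xi}$, the $(0,1)\cong(0,1)^2$ trick, and the final pushforward identity by disintegration) is routine. I would present the explicit-formula version since it is self-contained and avoids citing heavy selection machinery; alternatively one may simply cite a reference such as \cite{CarmonaDelarue1} for the parametrized inverse transform and then only write down the $(0,1)^2$ reindexing that makes the $\calx$-coordinate $n$-independent.
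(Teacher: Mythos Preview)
Your proposal is correct and follows essentially the same route as the paper: disintegrate each $\gamma_n$ over the common first marginal, build a single Skorohod-type map $(0,1)\to\calx$ for $\mu$, construct a jointly measurable inverse-transform parametrization of the disintegration kernels, and then compose with a measure-preserving identification $(0,1)\cong(0,1)^2$ so that the $\calx$-component depends only on the first coordinate and is therefore $n$-independent. The paper handles the joint-measurability obstacle by citing a reference for the constructive Skorohod argument, exactly as you suggest in your alternative.
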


\begin{proof}
Let us disintegrate $\gamma_n$ in the form
\[
\gamma_n(dx\,dy)=\overline{\gamma}_n(x,dy)\,\mu(dx),
\]
for a suitable probability kernel $\overline{\gamma}_n(x,dy)$ from $\calx$ to $\caly$. By a classical construction of Skorohod (used for instance in the proof of Skorohod's representation theorem) one can find a Borel measurable map $X:(0,1)\to\calx$ such that $X_\sharp m=\mu$. By the same result, for every $x\in \calx$, there exists  Borel measurable maps $Y_n(x,\cdot):(0,1)\to\caly$ such that $Y_n(x,\cdot)_\sharp m= \overline{\gamma}_n(x,\cdot)$. Since $\overline{\gamma}_n(x,dy)$ is a kernel, the functions $Y_n:(0,1)\times (0,1)\to \caly$ can be chosen to be Borel measurable jointly in their two variables: this follows from the constructive proof of   Skorohod's result (see e.g. \cite{Zab96}, Theorem 3.1.1 for a detailed verification).  Now define $Z_n:(0,1)\times (0,1)\to \calx\times\caly$ setting
\[
Z_n(u,v)=\Big(X(u),Y(X(u),v)\Big), \qquad u,v,\in (0,1).
\]
An easy computation shows that $(Z_n)_\sharp (m\otimes m)=\gamma_n$. Now take any Borel measurable map $\Phi:(0,1)\to (0,1)\times (0,1)$, given again by the Skorohod construction, such that $\Phi_\sharp m=m\otimes m$. Defining $\underline{\xi},\underline{\xi}_n$ as the components of $Z_n\circ\Phi $ one checks the desired properties.
\end{proof}

\bibliographystyle{plain}
\bibliography{The_Complete_Bibliography}

\end{document}